\documentclass[a4paper,12pt,reqno]{amsart}
\usepackage{fullpage}
\usepackage{amssymb}
\usepackage{amsthm}
\usepackage{mathtools}
\usepackage{graphicx}
\usepackage{tikz}
\usepackage{tikz-cd}

\usepackage[titletoc]{appendix}

\newcounter{counter2}
\numberwithin{equation}{section}
\numberwithin{counter2}{section}

\newtheorem{thm}[counter2]{Theorem}
\newtheorem{prop}[counter2]{Proposition}
\newtheorem{lemma}[counter2]{Lemma}
\newtheorem{cor}[counter2]{Corollary}
\newtheorem{defn}[counter2]{Definition}
\newtheorem{rem}[counter2]{Remark}

\setlength{\footskip}{2\baselineskip}

\newcommand{\im}{\operatorname{im}}

\DeclareSymbolFont{script}{U}{eus}{m}{n}
\DeclareMathSymbol{\Wedge}{0}{script}{"5E}

\newcounter{mnotecount}[section]

\renewcommand{\themnotecount}{\thesection.\arabic{mnotecount}}

\newcommand{\mnote}[1]
{\protect{\stepcounter{mnotecount}}$^{\mbox{\footnotesize
$
\bullet$\themnotecount}}$ \marginpar{
\raggedright\tiny\em
$\!\!\!\!\!\!\,\bullet$\themnotecount: #1} }

\def\be{\begin{equation}}

\def\dim{\mbox{dim}}

\def\dim{\mbox{dim}}

\def\ee{\end{equation}}

\def\bea{\begin{eqnarray}}
\def\eea{\end{eqnarray}}

\begin{document}
\title{Joyce structures from quadratic differentials on the sphere}

\author{Timothy Moy}
\address{Department of Applied Mathematics and Theoretical Physics\\ 
University of Cambridge\\ Wilberforce Road, Cambridge CB3 0WA, UK.}
\email{tjahm2@cam.ac.uk} 
\date{\today}

\begin{abstract}
Motivated by known examples of Joyce structures on spaces of meromorphic quadratic differentials,  we consider the isomonodromic deformations of particular second-order linear ODEs with rational potential.  We show the infinitesimal isomonodromic deformations are the kernel of a closed $2$-form arising from the intersection pairing of an algebraic curve defined by the potential.  
This observation enables us to construct Joyce structures on a class of moduli spaces of meromorphic quadratic differentials on the Riemann sphere,  and provides a new,  geometric description of the hyper-K\"ahler structures of previously computed examples.  We focus on the case of moduli of quadratic differentials with poles of odd orders,  where we obtain a complex hyper-K\"ahler metric with homothetic symmetry.   We also include an example corresponding to the moduli space of quadratic differentials with four simple poles,  which is a version of the classical isomonodromy problem that leads to the Painlev\'{e} VI equation.  
\end{abstract}

\maketitle
\section{Introduction}
Joyce structures were proposed by Bridgeland in \cite{B3} as a way of geometrising the study of Donaldson-Thomas invariants.  They are expected to exist on the complex manifold $M$ of stability conditions of a $CY_3$ triangulated category satisfying certain assumptions.  Some properties of these structures were earlier derived in \cite{J} and \cite{JS}.  
 The argument for their existence involves solving Riemann-Hilbert problems with infinite dimensional gauge group,  defined by the variation of the Donaldson-Thomas invariants and as far as the author understands,  solutions to these problems are not yet fully understood.   A geometric axiomatisation of Joyce structures without reference to their origin in Donaldson-Thomas theory is provided in \cite{BS} in terms of a \textit{complex hyper-K\"ahler metric} on the total space $TM$ satisfying certain symmetries compatible with an integral structure on $M$.  \par 
A realisation of spaces of stability conditions was achieved in \cite{BS2},  whereby those of Fukaya categories associated to particular Calabi-Yau $3$-folds fibreing over Riemann surfaces were realised as moduli spaces of quadratic differentials on Riemann surfaces with poles of fixed orders.  There are a number of examples of constructions of the corresponding Joyce structures (called Joyce structures of class $S[A_1]$) starting from the data of a moduli space of quadratic differentials.  In some cases,  the complex hyper-K\"ahler metric can be expressed algebraically in terms of coordinates induced by obvious coordinates on the space $M$ of quadratic differentials.  The case of holomorphic quadratic differentials was handled in \cite{B2}.  A Joyce structure on the moduli space of quadratic differentials with a single pole of order seven on the Riemann sphere was constructed via consideration of the isomonodromy of a family of ODEs with ``deformed cubic oscillator potential" in \cite{BM}.  The case of a single pole of larger odd degree was considered in \cite{DM24} by generalising this construction.  Finally,  the cases of a single pole of order eight and a pair of poles of order three were treated in \cite{BdM}.  
Building upon this earlier work,  to which we are indebted,  here we construct Joyce structures in a more general setting involving multiple poles with odd degree.   \par We emphasise that our aims go further than  extending previous ad-hoc constructions like \cite{DM24} to such a setting.  We seek a geometric description of the hyper-K\"ahler metrics of Joyce structures in this class.  To obtain this,  we first show that there is a geometric characterisation of the isomonodromic deformations of the corresponding ODEs.    
\par
The symplectic geometry and Hamiltonian structure of isomonodromic deformations have long been studied \cite{Bo} and are still of active interest \cite{BK}.   The infinitesimal generators may be obtained as the kernel of a \textit{fundamental 2-form},  and formulae in a general setting are provided in \cite{Y}.  In this work,  within the setting of second order linear ODEs in one dependent variable,  we take a (at least superficially) different approach and construct a fundamental $2$-form in terms of the intersection pairings of a family of algebraic curves.  \par 
We will calculate isomonodromic deformations of a large class of ODEs in the complex variable $x$ of the form
\begin{equation} 
\frac{d^2 Y}{dx^2} = Q(x)Y.  \label{schro}
\end{equation}
Specifically,  we consider those such that the potential takes the form
\begin{align}
Q(x) = \frac{Q_0(x)}{\hbar^2} + \frac{Q_1(x)}{\hbar} + Q_2(x),  
\end{align}
with $Q_0(x),Q_1(x),Q_2(x)$ rational functions depending on a point $\xi \in X$,  where $X$ is a complex manifold,  and $\hbar \in \mathbb{C}^*$ is the \textit{spectral parameter}.  \par  The most important observation will be that, treating $\hbar \in \mathbb{C}^*$ as a constant,  the fundamental $2$-form $\Omega$ arises as a pullback of the family of intersection pairings 
$$
H^1(\Sigma(\xi,\hbar),\mathbb{C}) \times H^1(\Sigma(\xi,\hbar),\mathbb{C}) \to \mathbb{C}
$$
on the algebraic curves $\Sigma(\xi,\hbar)$ defined by $y^2 = Q(x)$.  It is a pullback by a homomorphism,  invariantly defined by the Gauss-Manin connection,  computed at $(\xi,\hbar)$ by the map 
 \begin{align}
 \mu(\xi,\hbar) : T_{\xi}X &\to H^1(\Sigma(\xi,\hbar),\mathbb{C}) \\ 
V &\mapsto [V(y) dx]  \nonumber 
\end{align}
sending vectors to the cohomology class represented by a kind of derivative of the (meromorphic) $1$-form $y dx$ on $\Sigma(\xi,\hbar)$.  \par 
We will identify the parameter $\hbar$ with a coordinate on $\mathbb{CP}^1$ specifying a \textit{twistor fibre}.  Varying $\hbar$,  the family of $2$-forms $\Omega$ will be shown to define an $\mathcal{O}(2)$-valued $2$-form that defines a hyper-K\"ahler metric,  via their usual twistorial characterisation \cite{HKLR} \cite{BE}.  \par
To complete the construction,  we show that there is an immersion from the complex manifold $X$ on which the hyper-K\"ahler metric is defined,  to an open dense subset of $TM$,  with $M$ being the appropriate moduli space of quadratic differentials.  This immersion allows us to push-forward the hyper-K\"ahler structure to $TM$.  We then show the hyper-K\"ahler metric satisfies the definition of a Joyce structure as defined in \cite{BS}.  
\subsection*{Outline} 
The requisite twistor theory of complex hyper-K\"ahler metrics is recalled in \S \ref{HKintro}.  We provide a definition of Joyce structures in \S \ref{Joyceintro} based on that given in \cite{BS}.  In \S \ref{M_defn} we consider the moduli spaces $M$ of quadratic differentials under consideration and construct local coordinates.  In \S \ref{potential_setup} we introduce the complex manifold $X$ of potentials.  For fixed $\hbar$,  the infinitesimal isomonodromic deformations of (\ref{schro}) are shown in \S \ref{curve_approach} and \S \ref{flows_calculation} to be the kernel of the $2$-form $\Omega$.  In \S \ref{Omega_is_O2} we show $\Omega$ defines a complex hyper-K\"ahler structure.  In \S \ref{Joyce_finalisation} it is explained how to transfer the structure from $X$ to the total space $TM$ and that a Joyce structure is obtained.  We include an analysis,  \S \ref{PVI},  of an example (falling outside the family considered in \S \ref{M_defn} - \S \ref{Joyce_finalisation} but handled similarly) relating to the Painlev\'e VI equation,  corresponding to quadratic differentials with four simple poles on the sphere.

\section{Complex hyper-K\"ahler metrics and their twistor theory}\label{HKintro}
In this section we will recall complex hyper-K\"ahler geometry and in particular,  an equivalence between a complex hyper-K\"ahler structure and particular data on \textit{twistor space}.  Good references are \cite{BS}, \cite{BE}.  \par
\begin{defn}[Complex hyper-K\"ahler structure]
Let $X$ be a complex manifold of dimension $4n$.  A \textit{complex} hyper-K\"ahler structure is a holomorphically varying,  non-degenerate bilinear form on $TX$,  the holomorphic tangent bundle of $X$,  together with holomorphic endomorphisms $I,J,K$ of $TX$  satisfying
 \begin{align}
 I^2 &= J^2 = K^2 = IJK = -\operatorname{Id}_{TX}, \\
 I^*g &= J^*g = K^*g = g,  \nonumber \\
 \nabla I &= \nabla J = \nabla K = 0,  \nonumber
 \end{align}
 where $\nabla$ is the Levi-Civita connection. 
\end{defn}
Crucial to constructing our metrics will be families of rank-$2n$ distributions (subbundles of $TX$) depending on a \textit{spectral parameter} $\hbar = u^1/u^0 \in \mathbb{C}^*$ where $[u^0:u^1] \in \mathbb{CP}^1$.  
\begin{defn}[Twistor distribution]\label{twistor_distribution_def}
Let $X$ be a complex manifold of dimension $4n$.  A \textit{twistor distribution} on $X$ is a family $L$ of distributions $L(\hbar) \le TX$ depending on $\hbar \in \mathbb{C}^*$ such that $L(\hbar)$ can be written 
  \begin{align}\label{Lax_distribution}
L(\hbar) = \operatorname{span}\Big\{L_a \coloneqq U_a + \frac{V_a}{\hbar}\Big\}_{a=1}^{2n},  
\end{align}
where $U_a,V_a$ are vector fields on $X$ and 
$TX = \mathrm{span}\{U_{a},  V_{a}\}_{a=1}^{2n}$.  
\end{defn}
 This structure defines a parabolic geometry known under various aliases.  
 For example,  they are called \textit{$(2n,2)$-paraconformal structures} in \cite{BE}.  They are now frequently known as \textit{$(2n,2)$-almost-Grassmannian structures} \cite{CS},  dropping 
 `\textit{almost}' if $L(\hbar)$ is Frobenius integrable for all $\hbar$.  \par 
Importantly,  there is a canonical quaternionic structure $I,J,K$  determined by
\begin{align}\label{quaternions} 
J(U_a) = V_a,  \quad K(U_a) = iV_a
\end{align}
and the quaternion relations.  The span of the $U_a$ and $V_a$ are the $\mp i$ eigenspaces of $I$ respectively. \par 
There is a family of metrics Hermitian for all of the complex structures.  It consists of those that are of the form,  for a choice of basis $L_{a}$,  
\begin{align}
g = \sum_{a=1}^{2n} \sum_{b=1}^{2n} \omega_{ab} (U^{a} \otimes V^{b}   + V^{b} \otimes U^{a}),
\end{align}
where $\{U^{a},V^{a}\}_{a=1}^{2n}$ give the dual trivialisation to $\mathrm{span}\{U_{a},  V_{a}\}_{a=1}^{2n}$ and $\omega_{ab}$ is a skew, non-degenerate matrix of functions.   When $n=1$ this is a conformal class of metrics.  \par 
$L(\hbar)$ Frobenius integrable for all $\hbar$ is equivalent (see Appendix \ref{q_integrability}) to integrability of the $\pm i$-eigenspaces of the complex structures $I,J,K$.  We may think of $L$ as a subbundle of $T(X \times \mathbb{CP}^1)$.  Then,  integrability implies we may construct the space of leaves of this distribution. 
\begin{defn}[Twistor space]
Let $X$ be a complex manifold with a twistor distribution $L$ such that $L(\hbar)$ is Frobenius integrable for each $\hbar$.  Then the \textit{twistor space} associated to $L$ is the space of leaves
\begin{align}
Z = (X \times \mathbb{CP}^1) / L. 
\end{align}
\end{defn}
There are technical complications when it comes to equipping $Z$ with a manifold structure.  We will not need to burden ourselves with these.  We will define geometric structure on $Z$ in this section,  but  we will see the definitions have a clear interpretation in terms of data on $X \times \mathbb{CP}^1$ respecting $L$ in an appropriate way.   \par 
Let $p: Z \to \mathbb{CP}^1$ denote the natural projection.  There are the usual line bundles $\mathcal{O}(n) \to \mathbb{CP}^1$ for $n \in \mathbb{Z}$ with local sections one can think of as homogeneous holomorphic functions of degree $n$ on open subsets of $\mathbb{C}^2 \setminus \{0\}$.  
  \begin{defn}[Relative $\mathcal{O}(2)$-valued 2-form]
Let
 \begin{align}
 \wedge^k_pT^*Z \coloneqq \wedge^k({\operatorname{ker} dp})^*
 \end{align}
denote the bundle of $k$-forms relative to $p$.  
  A relative (to $p$) $\mathcal{O}(2)$-valued 2-form is a section of the bundle $\wedge^2_pT^* Z \otimes p^*\mathcal{O}(2)$. 
  \end{defn}
  We have the relative exterior derivative $d_p: \Gamma(\wedge^2_pT^*Z) \to  \Gamma(\wedge^3_pT^*Z)$ given by 
  \begin{align}
  d_p\omega = d\tilde{\omega}|_{\ker dp},  
  \end{align}
   where $\tilde{\omega}$ is any $2$-form that restricts to $\omega$ on $\operatorname{ker} dp$.  If $d_p\omega = 0$ we say $\omega$ is closed.  $d_p$ extends to local sections of $\wedge^2_pT^* Z \otimes p^*\mathcal{O}(2)$ as we will now explain.  First note that any section of $\wedge^2_p T^*Z \otimes p^*\mathcal{O}(2)$ may be written locally as a product of the form $\omega \otimes p^*\mu$.  We then define
   \begin{align}\label{dp_extension}
   d_p(\omega \otimes p^*\mu) \coloneqq d_p \omega \otimes p^* \mu 
   \end{align}
   and extend by linearity.  
   This is well defined since $d_p$ commutes with multiplication by functions constant in the fibres of $p$.  \par
      The equivalent data on $X \times \mathbb{CP}^1$ in the following proposition will be important for our purposes,  including characterisation of complex hyper-K\"ahler metrics.  The equivalence shown below in the case $Z$ is a well-behaved complex manifold is only for completeness.  
  \begin{prop}\label{practical_definition_twistor_form}
  A closed $\mathcal{O}(2)$-valued relative $2$-form on $Z$ is equivalent to a (usual) $2$-form $\Omega$ on $X \times \mathbb{CP}^1$ annihilating $L$ 
   that may be written
  \begin{align}\label{practical_definition_formula}
\Omega =  \frac{\Omega_-}{\hbar^2} + \frac{i\Omega_{I}}{\hbar} + \Omega_{+},  
  \end{align}
  where $\Omega_-,\Omega_{I},\Omega_{+}$ are (pullbacks of) closed $2$-forms on $X$. 
  \end{prop}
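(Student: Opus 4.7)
The proof is a translation along the quotient map $q\colon X\times \mathbb{CP}^1 \to Z$, whose vertical distribution is $L$.  Since $dq$ is fibrewise surjective with kernel $L$, and since $\ker d\pi_2 = TX$ for $\pi_2 = p\circ q$, we obtain $q^*\ker dp \cong TX/L$; thus relative (with respect to $p$) $2$-forms on $Z$ correspond to $2$-forms on $TX\subset T(X\times \mathbb{CP}^1)$ annihilating $L$ and varying in $\hbar$.  I would take this pullback identification as the workspace, which sidesteps any question of whether $Z$ itself carries a well-behaved manifold structure.

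Starting from a closed relative $\mathcal{O}(2)$-valued $2$-form $\tilde\Omega$ on $Z$, pulling back by $q$ yields a $\pi_2^*\mathcal{O}(2)$-valued $2$-form on $TX$ annihilating $L$ at each point of $X\times\mathbb{CP}^1$.  Using the canonical product splitting $T(X\times\mathbb{CP}^1)=TX\oplus T\mathbb{CP}^1$ (an ingredient not available on $Z$ itself), extend this to a $2$-form on $X\times\mathbb{CP}^1$ by zero in the $T\mathbb{CP}^1$ direction; this forces the absence of a $d\hbar$-term seen in (\ref{practical_definition_formula}).  To dispose of the $\mathcal{O}(2)$-factor, fix the trivialisation of $\pi_2^*\mathcal{O}(2)$ given by the standard global section of $\mathcal{O}(2)\to\mathbb{CP}^1$ vanishing to order two at $\hbar=0$; in this trivialisation any section takes the form $a/\hbar^2+b/\hbar+c$ on the $\hbar$-chart, with the complementary trivialisation producing a degree-two polynomial in $\tilde\hbar=1/\hbar$ on the opposite chart.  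Applied coefficient-wise to the pullback, this gives precisely $\Omega=\Omega_-/\hbar^2+i\Omega_I/\hbar+\Omega_+$ with each coefficient a $2$-form on $X$.  The reverse direction runs these identifications backward.

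For closedness, the definition (\ref{dp_extension}) combined with the local trivialisation reduces $d_p\tilde\Omega=0$ to closedness of the underlying relative $2$-form, which under the correspondence becomes $d_X\Omega=0$ on $X\times\mathbb{CP}^1$.  Because $\Omega$ carries no $d\hbar$-term and the monomials $1/\hbar^2,\,1/\hbar,\,1$ are linearly independent over functions on $X$, this separates cleanly into the three conditions $d\Omega_-=d\Omega_I=d\Omega_+=0$.

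I expect the only real obstacle to be the $\mathcal{O}(2)$-bookkeeping: verifying that the prescribed pole behaviour in (\ref{practical_definition_formula}) corresponds exactly to global holomorphic sections of $\mathcal{O}(2)$ in the chosen trivialisation, and that the implicit transition to the $\tilde\hbar$-chart produces data extending holomorphically over $\hbar=\infty$.  Beyond this purely coordinate-level point, the argument is essentially a direct translation along $q$ and $\pi_2$.
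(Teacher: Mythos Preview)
Your proposal is correct and follows essentially the same approach as the paper: both arguments trivialise $p^*\mathcal{O}(2)$ by the section $u^1u^1$ vanishing to order two at $\hbar=0$, extend the resulting form by zero in the $T\mathbb{CP}^1$ direction, and read off the decomposition \eqref{practical_definition_formula} together with the closedness of each coefficient from the linear independence of the $\hbar$-powers. The paper additionally makes explicit the Cartan-formula step ($\mathcal{L}_V\Omega = \iota_V d\Omega + d\iota_V\Omega = 0$) showing that an $\Omega$ annihilating $L$ with closed coefficients genuinely descends to $Z$, which you fold into your pullback framework; conversely, your remark about checking the $\tilde\hbar$-chart is a point the paper leaves implicit.
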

  \begin{proof} 
  Let $q: X \times \mathbb{CP}^1 \to Z$ be the quotient map.  
Suppose we have such an $\Omega$.  Using Cartan's formula,  we have for a vector field $V$ in the distribution $L$,  and $X,Y$ that push forward to vectors in $\ker dp$,  $(\mathcal{L}_V\Omega)(X,Y) = 0$,  so $\Omega$ defines a relative $2$-form $\omega$ on $Z$.  Take the tensor product with the section of $p^*\mathcal{O}(2)$ corresponding to the polynomial $u^1u^1$ to obtain relative $\mathcal{O}(2)$-valued $2$-form.  Now,  for any $2$-form $\tilde{\omega}$ on $Z$ restricting to $\omega$ on $\ker dp$,  it is evident $q^*\tilde{\omega}$ agrees with $\Omega$ up to a $2$-form vanishing when restricted to $X$.  Since $d\Omega$ vanishes when restricted to $X$,   $d\tilde{\omega}|_{\ker dp} = 0$,  and hence the relative $\mathcal{O}(2)$-valued $2$-form is $d_p$-closed.  \par  Conversely,  given a closed $\mathcal{O}(2)$-valued relative $2$-form trivialise the bundle $p^*\mathcal{O}(2)$ by the section corresponding to $u^1u^1$ to obtain a section $\omega$ of $\wedge^2_pT^*Z$.  
We may define a $2$-form $\Omega$ on $X \times \mathbb{CP}^1$ as $\Omega(X,Y) = \omega(dq(X),dq(Y))$ for $X,Y$ tangent to the first factor and insisting $\Omega$ annihilate tangents to the second.  Since the section corresponding to $u^1u^1$ has a zero of order two at $\hbar = 0$,  in order for the tensor product of this section and $\omega$ to be holomorphic,  $\Omega$ necessarily takes the form (\ref{practical_definition_formula}).  $\omega$ is relatively closed so the exterior derivative of $\Omega$ vanishes on restriction to tangents to the first factor,  hence $\Omega_{-},\Omega_{I},\Omega_{+}$ must be all closed.  
These constructions are clearly inverse to one another. 
   \end{proof}
   A closed $\mathcal{O}(2)$-valued $2$-form is of maximal rank if it induces a non-degenerate form on each fibre of the fibration $Z \to \mathbb{CP}^1$.  In terms of $\Omega$ this is equivalent to $\dim \ker \Omega = 2n+1$.  \par
 The twistor characterisation of hyper-K\"ahler metrics can be traced back to \cite{Pe} in four dimensions (although the quaternionic aspect is not mentioned),  \cite{HKLR} 
 in the case of real hyper-K\"ahler metrics in arbitrary dimension.  Another reference in the complex hyper-K\"ahler case is \cite{BE}.  These references discuss construction of a complex manifold $X$ from a complex manifold $Z$ with features making $Z$ the twistor space of some $X$ equipped with a hyper-K\"ahler metric.  Our $X$ is thought of as fixed so the following  is sufficient: 
   \begin{thm}[Twistor characterisation of complex hyper-K\"ahler metrics]\label{twistor_characterisation}
   A complex hyper-K\"ahler structure on a complex manifold $X$ of dimension $4n$ is equivalent to an integrable twistor distribution on $X$ and a closed relative $\mathcal{O}(2)$-valued $2$-form on the twistor space of maximal rank.  
   \end{thm}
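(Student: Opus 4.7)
The plan is to prove the equivalence in both directions, using Proposition~\ref{practical_definition_twistor_form} to trade the closed relative $\mathcal{O}(2)$-valued $2$-form on $Z$ for a $2$-form
$$
\Omega = \frac{\Omega_-}{\hbar^2} + \frac{i\Omega_I}{\hbar} + \Omega_+
$$
on $X \times \mathbb{CP}^1$ that annihilates $L$ and whose coefficients $\Omega_-, \Omega_I, \Omega_+$ are closed on $X$.

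For the implication from twistor data to a hyper-K\"ahler structure, the complex structures $I, J, K$ come for free from the integrable distribution via (\ref{quaternions}), and integrability of $L(\hbar)$ for all $\hbar$ is equivalent to integrability of $I, J, K$ as noted in the text. The key step is to produce the metric. Expanding the vanishing condition $\Omega(U_a + V_a/\hbar, U_b + V_b/\hbar) = 0$ in powers of $\hbar$, the leading and trailing coefficients are $\Omega_-(V_a, V_b)$ and $\Omega_+(U_a, U_b)$, which vanish by the maximal rank hypothesis, while the three middle coefficients give linear relations between the matrices $\Omega_-(U_a, V_b)$, $\Omega_I(U_a, V_b)$, $\Omega_+(U_a, V_b)$ and the blocks $\Omega_I(V_a, V_b), \Omega_I(U_a, U_b)$. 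These relations supply a coefficient matrix $\omega_{ab}$ with the properties required by the text, and $g$ is then defined via the displayed formula. Symmetry, non-degeneracy, and Hermiticity with respect to $I, J, K$ are direct algebraic checks using (\ref{quaternions}). To verify the K\"ahler property, I would identify the three fundamental $2$-forms $\omega_I(\cdot,\cdot) = g(I\cdot,\cdot), \omega_J, \omega_K$ with suitable linear combinations of $\Omega_-, \Omega_I, \Omega_+$, so that closedness of the latter translates into closedness of all three K\"ahler forms; since $I, J, K$ are already integrable, the standard argument that a Hermitian metric with closed associated $(1,1)$-form is parallel for its complex structure then delivers $\nabla I = \nabla J = \nabla K = 0$.

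For the reverse implication, given $(g, I, J, K)$ I would locally choose a holomorphic frame $\{U_a\}_{a=1}^{2n}$ for the $-i$-eigenspace of $I$ and set $V_a := J(U_a)$, so that $U_a, V_a$ together span $TX$ and (\ref{quaternions}) holds automatically. Setting $L(\hbar) := \operatorname{span}\{U_a + V_a/\hbar\}$, one recognises $L(\hbar)$ as (up to rescaling) the $-i$-eigenspace of a complex structure in the twistor sphere of $(g, I, J, K)$, whose integrability is a standard consequence of the hyper-K\"ahler condition. Inverting the identifications of the forward direction produces $\Omega_-, \Omega_I, \Omega_+$ from the K\"ahler forms; annihilation of $L$ by the resulting $\Omega$ follows from Hermiticity of $g$, closedness of the three coefficient forms from the K\"ahler property, and maximal rank from non-degeneracy of $g$. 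Proposition~\ref{practical_definition_twistor_form} then repackages $\Omega$ as the required relative $\mathcal{O}(2)$-valued $2$-form on $Z$.

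The main obstacle will be nailing down the algebraic dictionary between the coefficient forms $(\Omega_-, \Omega_I, \Omega_+)$ and the K\"ahler triple $(\omega_I, \omega_J, \omega_K)$: the correspondence depends on sign and normalisation conventions, and must be set up so that the two directions are mutually inverse and so that Hermiticity of $g$ precisely reproduces the annihilation of $L$. Once these conventions are pinned down, the remaining steps reduce to linear algebra using the quaternion relations, plus a single appeal to the standard K\"ahler parallelism argument.
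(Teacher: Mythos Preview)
Your proposal follows essentially the same route as the paper, but there is one point that needs correcting. In the direction from twistor data to hyper-K\"ahler, you extract information by expanding the \emph{scalar} equation $\Omega(U_a + V_a/\hbar,\, U_b + V_b/\hbar) = 0$. This says only that $L(\hbar)$ is \emph{isotropic} for $\Omega$, which is strictly weaker than the annihilation condition $L(\hbar) \subseteq \ker \Omega$ supplied by Proposition~\ref{practical_definition_twistor_form}. The isotropy relations alone are not enough: they yield only antisymmetrised combinations such as $\Omega_-(U_a, V_b) - \Omega_-(U_b, V_a)$ rather than $\Omega_-(U_a, V_b)$ itself, and they cannot by themselves force $\Omega_I$ to be non-degenerate or pin down the matrix $\omega_{ab}$.

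The paper instead expands the \emph{$1$-form} identity $\Omega(U_a + V_a/\hbar,\,\cdot\,) = 0$ in powers of $\hbar$, obtaining
\[
\Omega_-(U_a,\cdot) + i\Omega_I(V_a,\cdot) = 0, \qquad \Omega_+(V_a,\cdot) + i\Omega_I(U_a,\cdot) = 0,
\]
together with $V_a \in \ker \Omega_-$ and $U_a \in \ker \Omega_+$. From these one sees immediately that $\Omega_I$ is non-degenerate (the $\Omega_-(U_a,\cdot)$ and $\Omega_+(V_a,\cdot)$ are $4n$ independent $1$-forms by maximal rank), defines $g \coloneqq -\Omega_I(I\cdot,\cdot)$, and reads off hermiticity via $g = -(\Omega_- + \Omega_+)(J\cdot,\cdot) = -(\Omega_- - \Omega_+)(K\cdot,\cdot)$. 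Once you replace your scalar expansion with this $1$-form expansion, the remainder of your plan (identifying the K\"ahler forms with $\Omega_\pm,\Omega_I$, the standard parallelism argument, and the reverse construction of $\Omega$ from the K\"ahler triple) agrees with the paper's proof.
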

     \begin{proof}
  Given a complex hyper-K\"ahler structure on $X$,  we may construct a twistor distribution by choosing a trivialisation $\{U_a\}_{a=1}^{2n}$ for the $i$-eigenspace of $I$,  defining $V_a \coloneqq J(U_a)$ and defining the twistor distribution as per (\ref{Lax_distribution}).  The complex structures (\ref{quaternions}) associated with the twistor distribution agree with the ones coming from the hyper-K\"ahler structure.  $L(\hbar)$ is integrable for all $\hbar$ due to the integrability of the complex structures (which follows from the complex structures being parallel for the Levi-Civita connection).  We now define 
  \begin{align}
  \Omega_- &\coloneqq \frac{1}{2}\big(g(J\cdot,\cdot)-ig(K\cdot,\cdot)\big), \\
  \Omega_I &\coloneqq g(I\cdot,\cdot), \\
  \Omega_+ &\coloneqq  \frac{1}{2}\big(g(J\cdot,\cdot)+ig(K\cdot,\cdot)\big).  
  \end{align}
  With these definitions,  $\Omega$ as in (\ref{practical_definition_formula}) defines a closed relative $\mathcal{O}(2)$-valued $2$-form on the twistor space of maximal rank.  \par 
  Conversely,  given a closed relative $\mathcal{O}(2)$-valued $2$-form on the twistor space of maximal rank,  recall we have the quaternionic structure (\ref{quaternions}) associated to the twistor distribution.  We define the symmetric bilinear form
  \begin{align}
  g = -\Omega_I(I\cdot,\cdot). 
  \end{align}
We consider the equations
  \begin{align}
 \Omega_-(U_a,\cdot) + \hspace{0.4mm} i\Omega_I(V_a,\cdot) &= 0,  \label{L_annihilator_1} \\ 
  \Omega_+(V_a,\cdot) + i\Omega_I(U_a,\cdot) &= 0 \label{L_annihilator_2}
  \end{align}
  that arise from $\Omega$ annihilating $L(\hbar)$ for each $\hbar$.  Since $\dim \ker \Omega = 2n + 1$,  we have $\ker \Omega_- = \text{span}\{V_a\}_{a=1}^{2n}$ and $\ker \Omega_+ = \text{span}\{U_a\}_{a=1}^{2n}$.  This implies  $  \Omega_-(U_a,\cdot)$ and $ \Omega_+(V_a,\cdot)$ give $4n$-linearly independent $1$-forms $a = 1,...,2n$.  Thus (\ref{L_annihilator_1},  \ref{L_annihilator_2}) imply  $\Omega_I$ defines an isomorphism from $TX$ to $T^*X$ and hence that $g$ defines a non-degenerate bilinear form.   Next,  (\ref{L_annihilator_1},  \ref{L_annihilator_2}) together imply 
  \begin{align}
  -(\Omega_- +\Omega_+)(J\cdot,\cdot) = g = -i(\Omega_- -\Omega_+)(K\cdot,\cdot), 
  \end{align}
  from which Hermiticity with respect to $I,J,K$ follows immediately.  Lastly,  the integrability of the complex structures together with closure of the $2$-forms  $\Omega_{-},\Omega_{I},\Omega_{+}$ implies that the complex structures are parallel.  
  \end{proof}
Note that changing the choice of compatible complex structures on the hyper-K\"ahler side of the equivalence corresponds to a M\"obius transformation of $\mathbb{CP}^1$.  \par
\section{Joyce structures}\label{Joyceintro}
Joyce structures were introduced in \cite{B3} as a geometric structure expected to be encoded by the dependence of the Donaldson-Thomas invariants on the choice of stability condition on a $CY_3$ category.  Bridgeland's insight was to interpret the wall-crossing formula as the isomonodromy condition for a connection with infinite dimensional gauge group,  in particular,   the Poisson automorphisms of the algebraic torus.  The most striking feature of the proposed structure is a (meromorphic) complex hyper-K\"ahler metric with a number of symmetries on the total space $TM$,  where $M$ is the complex manifold of stability conditions.  The hyper-K\"ahler metric arises as a consequence of the isomonodromic deformations defining a twistor distribution on $TM$.  The existence of solutions to the Riemann-Hilbert problem of reconstructing the twistor distribution from the wall-crossing data is not known in general for the class of $CY_3$ categories conjectured to carry Joyce structures on their spaces of stability conditions.  \par 
A geometric axiomatisation of Joyce structures is provided in \cite{BS}.  Let $(M,\omega)$ be a complex manifold of complex dimension $2n$ with holomorphic symplectic\footnote{To avoid confusion,  in \cite{BS},  Joyce structures compatible with a non-degenerate symplectic form as we will assume here are called \textit{strong} Joyce structures. } form $\omega$.  Let $TM$ be the holomorphic tangent bundle equipped with the natural projection $\pi: TM \to M$.  A lattice $\mathcal{G}_p$ at $p$ is a discrete subgroup of $T_pM$ such that complex multiplication $\mathcal{G}_p \otimes_{\mathbb{Z}} \mathbb{C} \to TM$ is an isomorphism.  A holomorphic bundle of lattices $\mathcal{G} \hookrightarrow TM$ defines a flat linear connection $\nabla$ on $TM$ by distinguishing the $2n$-dimensional vector space of parallel sections.  Specifically,  the vector space of parallel sections is generated by those sections taking values in the lattice.  \par The first ingredient of a Joyce structure on $M$ is the following.  
\begin{defn}[Period structure with symplectic form]
Let $M$ be a complex manifold of dimension $2n$.  A period structure on $M$ is a holomorphic bundle of lattices $\mathcal{G} \to TM$ together with a holomorphic symplectic form $\omega$ on $M$ that takes rational values on the lattice,  and a holomorphic vector field $E_0$ such that $\nabla E_0 = \operatorname{Id}_{TM}$ where $\nabla$ is the flat connection associated to the lattice.  
\end{defn}
 Next,  the \textit{non-linear data} that completes a Joyce structure is a complex hyper-K\"ahler structure on the complex manifold $TM$,  along with some symmetry conditions.  \par First,  we recall the geometry of the tangent bundle.  There is a short exact sequence 
 \begin{center}
\begin{tikzcd}
\ker d\pi \arrow[r, hook] & T(TM) \arrow[r, "\pi_*", two heads] & \pi^*TM \arrow[ll, "\tau" description, bend left=20].  
\end{tikzcd}
\end{center}
Here $\pi_*$ is the obvious projection and $\tau$ is the vertical lift of a tangent vector.  
This yields a canonical linear endomorphism $\nu: T(TM) \to T(TM)$ satisfying $\nu \circ \nu = 0$ called a \textit{null structure} (see \cite{D}) given by $\nu \coloneqq \tau \circ \pi_*$.  
\begin{defn}[Joyce structure]\label{J}
Let $M$ be a complex manifold of dimension $2n$.  Let $\pi: TM \to M$ be the natural projection.  A (strong) Joyce structure is a period structure with symplectic form $\omega$ on $M$ together with a complex hyper-K\"ahler structure $(g,I,J,K)$ on the total space $TM$ satisfying the following:
\begin{enumerate}
\item $\pi^*\omega = \Omega_{-}$,  where $\Omega_{-} \coloneqq (g(J\cdot,\cdot) - ig(K\cdot,\cdot))/2$.
\item $\nu = (J-iK)/2$.
\item $d\pi \circ iI = d\pi$.
\item $\iota^*I = I,  \iota^*(J \pm iK) = -(J \pm iK),  \iota^*g = -g$,  where $\iota$ is the linear involution in the fibres given by multiplication by $-1$.  
\item $\mathcal{L}_EI = 0,  \mathcal{L}_{E}(J \pm iK) = \mp(J \pm iK),  \mathcal{L}_Eg = g$,  where $E$ is the horizontal lift of $E_0$ with respect to  $\nabla^{GM}$.  
\item The complex hyper-K\"ahler structure is invariant under translations by the lattice.
\end{enumerate}
\end{defn}
The first three conditions are equivalent to the hyper-K\"ahler structure on $TM$ defining a \textit{normalised affine symplectic fibration} in the sense of \cite{BS}.  

\section{Quadratic differentials and the moduli space $M = {\rm Quad}(\bar{m})$}\label{M_defn}
A meromorphic quadratic differential on $\mathbb{CP}^1$ is a meromorphic section of $K_{\mathbb{CP}^1}^2$.  With respect to the affine coordinate $x$ for $\mathbb{CP}^1$ we may write any such quadratic differential as $\phi = \mathcal{Q}(x) dx^2$ for a rational function $\mathcal{Q}(x)$ on $\mathbb{C}$.  \par 
We will construct Joyce structures on open subsets of the moduli space $M = \operatorname{Quad}(\bar{m})$ consisting of meromorphic quadratic differentials on $\mathbb{CP}^1$ with simple zeroes and with exactly $N+1$ poles of fixed orders given by an unordered tuple of integers $\bar{m}$.  We will assume that:
\begin{itemize}
\item $N \ge 0$ so that there is at least one pole.  
\item $\bar{m}$ consists of odd integers.
\item There is at least one element of $\bar{m}$ greater than or equal to five.  
\item $\bar{m} \ne \{5\}$. 
\end{itemize} 
Given the conditions above,  we are assured that $M$ is non-empty of dimension greater than or equal to two.   
 The condition that there is a pole of order greater than or equal to five is mainly for convenience in choosing local coordinates.  Indeed,  later we will handle separately the case with four simple poles,  along the same lines as the restricted class above.  Note that the moduli spaces considered here include the cases of $\bar{m} = \{2n + 5\}$ for $n \ge 1$ which were considered in \cite{DM24}. \par
We identify quadratic differentials $\phi_1$ and $\phi_2$ if there is an automorphism $f$ of $\mathbb{CP}^1$ with $f^*\phi_2 = \phi_1$.  Acting by a M\"obius transformation we may restrict ourselves to consider representative quadratic differentials with a pole at $x = \infty$ on $\mathbb{CP}^1$ taking the pole at infinity to be of order at least five.  To such a quadratic differential we may associate a polynomial: If $\phi = \mathcal{Q}(x)dx^2$ take $\mathcal{Q}(x)$ and subtract the principal parts of its Laurent series at poles in $\mathbb{C}$.  We may act by a M\"obius transform fixing $\infty$ so that the sum of the zeroes of this polynomial vanishes.  Next,  we can act by a dilation to set the coefficient of the highest power of $x$ to $1$.  All this implies $\phi = Q_0(x) dx^2$ where
\begin{align}\label{coordinate_rep}
Q_0(x) &= x^{2m_\infty-5} + \sum_{i=0}^{2m_\infty-7} a^{(\infty)}_ix^i + \sum_{\alpha=1}^{N}\sum_{i=1}^{2m_\alpha-1}\frac{a_i^{(\alpha)}}{(x-w_\alpha)^i},
\end{align}
where $m_\infty,m_1,...,m_N$ are positive integers determined by the pole orders,  the $w_\alpha$ give the locations of the poles in $\mathbb{C}$ and the $a_i^{(\alpha)}$ give Laurent coefficients.  
 Such a representative may not be unique.  
 What is true,  following from elementary properties of M\"obius maps,  is that there are finitely many representatives of the above form.  This implies we may then take the parameters $\{a_i^{(\alpha)}\hspace{-1mm}, w_\alpha\}$ to be \textit{local} coordinates on $M$.  Recall that we would like $N+1$ distinct poles (including infinity) of fixed orders and simple zeroes.  This condition can be expressed as the non-vanishing on $M$ of some holomorphic function of the parameters.  Some obvious consequences of this condition are that $a^{(\alpha)}_{2m_\alpha-1} \ne 0$ for each $\alpha$ and the $w_\alpha$ must be distinct.  The coordinates distinguish an ordering on the poles with orders
 $$
 \bar{m} = \{2m_\infty-1,  2m_1-1, ...,2m_N-1\},  
 $$
 where $m_\infty \ge 3$ and $m_\alpha \ge 1$ for $\alpha = 1,...,N$.  
A simple count gives 
\begin{align}
\operatorname{dim} M = 2m_\infty - 6 + \sum_{\alpha=1}^{N}2m_\alpha.  
\end{align}
Associated to a quadratic differential is a smooth algebraic curve called the \textit{spectral curve}.  
It is a branched cover of $\mathbb{CP}^1$.  It may be realised as the submanifold
\begin{align}
\Sigma_0 \coloneqq \{ l \in  K_{\mathbb{CP}^1}(D) \ | \ l \otimes l = \phi\} 
\end{align}
 of $K_{\mathbb{CP}^1}(D)$ where $D$ is the divisor on $\mathbb{CP}^1$ given by 
\begin{align}
D = m_\infty \cdot \infty + \sum_{\alpha=1}^{N} m_\alpha \cdot w_\alpha
\end{align}
and thinking of $\phi$ as a holomorphic section of $K_{\mathbb{CP}^1}(D) \otimes K_{\mathbb{CP}^1}(D)$. 
More practically,  the \textit{affine piece} is realised as a subspace of $\mathbb{C}^2$ by the equation $y_0^2 = Q_0(x)$ away from the poles.  The covering map $\Pi: \Sigma_0 \to \mathbb{CP}^1$ is branched precisely at the poles and zeroes of $\phi$.  There is a \textit{tautological} $1$-\textit{form} $\psi$,  a meromorphic section of the canonical bundle $K_\Sigma$ defined (up to a choice of sign) by the condition $\Pi^*\phi = \psi \otimes \psi$.  It has local representation $\psi = y_0 \,  dx$ in the charts for $\Sigma$ induced by the chart for $\mathbb{CP}^1$ in which we may write $\phi = Q_0(x) dx^2$.  $\psi$ has poles at the pre-images under $\Pi$ of poles of $\phi$ of order greater than one.  Precisely: $\psi$ has a pole of order $2m_\alpha-2$ at the branch point $w_\alpha$ (note that if $\phi$ has a simple pole at $w_\alpha$ then $\psi$ will be holomorphic at the corresponding branch point).  The $1$-form has no residues at its poles since pulling back by the covering involution $y \mapsto -y$ sends $\psi \mapsto -\psi$.  An important consequence of the vanishing residues is that $\psi$ defines an element $[\psi] \in H^1(\Sigma_0,\mathbb{C})$. \par 
An application of the Riemann-Hurwitz formula shows that for some $p \in M$,  the spectral curve $\Sigma_0(p)$ defined by the equation \begin{align}
y_0^2 = Q_0(x)
\end{align} 
has genus
\begin{align}\label{ngendef}
n \coloneqq \frac{\operatorname{dim} M}{2}.  
\end{align}
There is a holomorphic rank-$2n$ vector bundle $\mathcal{E} \to M$ with fibres
\begin{align}
\mathcal{E}_p \coloneqq  H^1(\Sigma_0(p),\mathbb{C}). 
\end{align}
This is equipped with a canonical linear connection,  the \textit{Gauss-Manin} connection $\nabla^{GM}$ defined as the flat connection with parallel sections taking values in the dual lattice to the lattice defined by a canonical basis of cycles $\{\gamma_{a}\}_{a=1}^{2n}$.  There is also a canonical section $Z$ taking the value $[\psi]$ at a point $p \in M$.  \par 
Define a vector bundle homomorphism $\mu_0: TM \to \mathcal{E}$ by
$$
\mu_0(U) = \nabla^{GM}_U Z.  
$$
Let $\{\nu_{a}\}_{a=1}^{2n}$ denote the dual basis to $\{\gamma_{a}\}_{a=1}^{2n}$.  The corresponding sections of $\mathcal{E}$ are parallel for the Gauss-Manin connection.  The $\nu_{a}$-coefficient of an arbitrary class may be evaluated by integration around the cycle $\gamma_{a}$.  

The explicit realisation of the curves as branched covers allows us to find a representative meromorphic $1$-form for $\mu_0(U)$:
\begin{align}
\mu_0(U) = \nabla^{GM}_U Z = \sum_{a=1}^{2n} U\bigg( \int_{\gamma_a} \psi \bigg) \nu_a = \sum_{a=1}^{2n} \bigg( \int_{\gamma_a} U(y_0)dx \bigg) \ \nu_a = [U(y_0)dx]. 
\end{align}
Here $U(y_0)$ is defined as the derivative of the function $y_0$ (thought of as a function of $x$ and the coordinates on $M$) in the direction $U$.  
For brevity we will adopt the notation:
\begin{align}
U(\psi) \coloneqq U(y_0)dx. 
\end{align}
so that $\mu_0(U) = [U(\psi)]$.  \par
We will see that $\mu_0$ is an isomorphism and use it to equip $M$ with a period structure.  \par 
In each fibre we have the usual intersection pairing $\langle \cdot | \cdot \rangle_0 : H^1(\Sigma_0,\mathbb{C}) \times H^1(\Sigma_0,\mathbb{C}) \to \mathbb{C}$,  
\begin{align}
\langle [\alpha] | [\beta] \rangle_0 \coloneqq \int_{\Sigma_0} \alpha \wedge \beta,  
\end{align}
where $\alpha, \beta$ are smooth representatives.  Since $\{\gamma_{a}\}_{a=1}^{2n}$ is a canonical basis,  $\langle \nu_{a} | \nu_{b} \rangle_0 = \omega_{ab}$ where $\omega_{ab}$ is the standard symplectic matrix of rank $2n$. \par 
These intersection pairings define a non-degenerate holomorphic skew-form (which we will also denote by $\langle \cdot | \cdot \rangle_0$) on $\mathcal{E}$.  Define\footnote{This definition of $\omega$ differs by a constant factor from the definition in \cite{DM24}.}
\begin{align}
\omega \coloneqq \mu_0^*\langle \cdot | \cdot \rangle_0,  
\end{align}
which is a $2$-form on $M$.  
\begin{prop}[Residue formula]\label{residue_formula_prop} Let $\mathcal{P}_0(p) \subseteq \Sigma_0(p)$ denote the set of poles of $\psi$.  The $2$-form
$\omega$ is closed and given by 
\begin{align}\label{intersection_residue}
\omega(U,V) = 2\pi i \hspace{-2mm} \sum_{x \in \mathcal{P}_0(p)} \,  \underset{x}{\operatorname{Res}}( d^{-1} U(\psi)V(\psi)),  
\end{align}
where $U,V \in T_pM$ for some $p \in M$ and the anti-derivative $d^{-1}U(\psi)$ is a meromorphic function defined on open discs about each $x \in \mathcal{P}_0(p)$ and chosen arbitrarily. 
\end{prop}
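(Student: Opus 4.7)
The plan is to compute $\omega(U,V) = \int_{\Sigma_0(p)} \tilde U \wedge \tilde V$ for smooth representatives $\tilde U, \tilde V$ of the classes $[U(\psi)], [V(\psi)]$, then apply Stokes's theorem to convert the integral into a sum of residues. Since $\psi$ has vanishing residues at each of its poles, so do $U(\psi)$ and $V(\psi)$, and on a small disc around each $x \in \mathcal{P}_0(p)$ we may choose meromorphic antiderivatives $f_x \coloneqq d^{-1}U(\psi)$ and $g_x \coloneqq d^{-1}V(\psi)$. Picking bump functions $\rho_x$ each equal to $1$ in a smaller disc around $x$ and supported in the disc where $f_x, g_x$ are defined, set
$$
\tilde U \coloneqq U(\psi) - \sum_{x} d(\rho_x f_x), \qquad \tilde V \coloneqq V(\psi) - \sum_{x} d(\rho_x g_x);
$$
these are globally smooth, closed $1$-forms vanishing identically near every pole of $\psi$ and cohomologous to the meromorphic originals on the punctured surface.

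The key reduction will be that $\int_{\Sigma_0(p)} \tilde U \wedge \tilde V = \int_{\Sigma_0(p) \setminus \mathcal{P}_0(p)} \tilde U \wedge V(\psi)$: the difference $-\sum_x \tilde U \wedge d(\rho_x g_x)$ rewrites, using $d\tilde U = 0$, as $\sum_x d(\rho_x g_x \tilde U)$, and each $\rho_x g_x \tilde U$ extends smoothly across $\mathcal{P}_0(p)$ (since $\tilde U$ vanishes there), so Stokes on the closed surface $\Sigma_0(p)$ makes the integral vanish. Substituting $\tilde U = U(\psi) - \sum_x d(\rho_x f_x)$ and using $U(\psi) \wedge V(\psi) = 0$ (both are of type $(1,0)$), together with $dV(\psi) = 0$ away from $\mathcal{P}_0(p)$, produces
$$
\omega(U,V) = -\sum_{x \in \mathcal{P}_0(p)} \int_{\Sigma_0(p) \setminus \mathcal{P}_0(p)} d\bigl(\rho_x f_x V(\psi)\bigr).
$$
Applying Stokes on $\Sigma_0(p)$ with $\epsilon$-discs $B_\epsilon(y)$ excised around each pole, only the boundary circle at $x$ contributes in the $x$-summand (since $\rho_x = 0$ near $y \ne x$ and $\rho_x = 1$ on $\partial B_\epsilon(x)$), and the inward orientation produces a further sign that, combined with the residue theorem, yields the claimed $2\pi i\operatorname{Res}_x(f_x V(\psi))$.

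Closedness of $\omega$ is essentially formal: in a local trivialisation of $\mathcal{E}$ by the dual basis $\{\nu_a\}$ to the canonical cycles $\{\gamma_a\}$, the components of $Z$ are the locally defined period coordinates $Z^a(p) = \int_{\gamma_a}\psi$, so $\mu_0(U) = \sum_a U(Z^a)\nu_a$ and $\omega = \sum_{a<b}\omega_{ab}\, dZ^a \wedge dZ^b$, with $\omega_{ab}$ the constant entries of the standard symplectic form. This is a linear combination of closed $2$-forms with constant coefficients, hence closed.

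The main technical obstacle is bookkeeping: matching the sign conventions in Stokes's theorem and the residue theorem, and checking that all Stokes applications commute with the limit $\epsilon \to 0$ (straightforward once one observes that $f_x V(\psi)$ is meromorphic at $x$ with poles of bounded order). Once the smooth representatives are in hand, the argument is a repackaging of the standard calculation expressing intersection pairings of de Rham classes represented by meromorphic $1$-forms of vanishing residue as sums of residues.
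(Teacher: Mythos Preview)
Your derivation of the residue formula is essentially the paper's own argument: construct smooth representatives by subtracting $d(\rho\, d^{-1}U(\psi))$ using local antiderivatives and a bump function, invoke $U(\psi)\wedge V(\psi)=0$ since both are meromorphic $(1,0)$-forms, and reduce via Stokes to a contour integral around the poles. The paper organises the Stokes step in one pass (expanding the product of smooth representatives on the annular region and collecting exact terms), while you do it in two (first replacing $\tilde V$ by $V(\psi)$, then $\tilde U$ by $U(\psi)$), but this is cosmetic.

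Where you genuinely diverge is in the proof of closedness. The paper differentiates under the contour integral in the residue formula and integrates by parts to verify the identity $U(\omega(V,W)) = V(\omega(U,W)) + W(\omega(V,U)) + \omega([U,V],W) - \omega([U,W],V) - \omega([V,W],U)$, i.e.\ $d\omega=0$ directly. Your argument is cleaner: write $\mu_0(U)=\sum_a U(Z^a)\nu_a$ in the Gauss--Manin--flat frame, so that $\omega = \tfrac{1}{2}\sum_{a,b}\omega_{ab}\,dZ^a\wedge dZ^b$ with constant $\omega_{ab}$, which is manifestly closed. This works because the $\nu_a$ are flat and the intersection pairing is constant in that frame; it does \emph{not} require the $Z^a$ to be local coordinates (a fact the paper only establishes later, as a corollary of non-degeneracy of $\omega$), so there is no circularity. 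Your route avoids the contour-integral bookkeeping entirely at the cost of invoking the Gauss--Manin flat structure; the paper's route is more self-contained in that it uses only the residue expression just derived.
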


\begin{proof} 
Take disjoint open discs $D_\alpha$ containing $x = w_\alpha$ and $D_\infty$ containing $\infty$  and let $D = \cup_{\alpha=1}^N D_\alpha \cup D_\infty$.  Then take open discs $S_{\alpha}$ to have, say,  half the radii and similarly for $S_\infty$ and let $S = \cup_{\alpha=1}^N S_\alpha \cup S_\infty$. 
Take a smooth function $\rho$ which takes the constant value $1$ on the $S$ and has support in $D$.  A meromorphic anti-derivative $d^{-1}U(\psi)$ defined on $D$ exists since $\psi$ and hence $U(\psi)$ is meromorphic with no residues.  
Then $U(\psi)- d(\rho d^{-1}U(\psi))$ is a globally defined smooth $1$-form that is holomorphic on the complement of $D$ and
\begin{equation}
\mu_0(U) = [U(\psi)] = [U(\psi)- d(\rho d^{-1}U(\psi))].  
\end{equation}
Since the wedge product of holomorphic $1$-forms vanishes,  
\begin{align}
\omega(U,V) &= \int_{D} \Big( U(\psi) - d(\rho d^{-1}U(\psi)) \Big) \wedge \Big( V(\psi) - d(\rho d^{-1}V(\psi)) \Big) \nonumber \\
 &= \int_{D} d\Big(\rho d^{-1}U(\psi) d(\rho d^{-1}V(\psi)) + (\rho d^{-1}V(\psi))  U(\psi) - (\rho d^{-1}U(\psi))  V(\psi) \Big) \nonumber \\ 
& = \oint_{\partial S} d^{-1}U(\psi)V(\psi),  
\end{align} 
where we have used Stokes' theorem and that $\rho = 1$ on $\partial S$ in the last step.  Here the integration over $\partial S$ is taken with respect to the usual orientation.  
By Cauchy's integral formula,  
\begin{align}
\omega(U,V) = \int_{\partial S} d^{-1}U(\psi)V(\psi)  = 2\pi i \hspace{-2mm} \sum_{x \in \mathcal{P}_0(p)}  \underset{x}{\operatorname{Res}}(d^{-1}U(\psi)  V(\psi)).  
\end{align} 
Finally,  differentiating inside the contour integral and integration by parts yields
\begin{align}
U(\omega(V,W)) = \  \nonumber &V(\omega(U,W)) + W(\omega(V,U)) \\ &+ \omega([U,V],W) - \omega([U,W],V) - \omega([W,V],U),  
\end{align}
so that $d\omega = 0$.  
\end{proof}
Using this residue formula we can prove the following: 
\begin{prop}
 $\omega$ is a symplectic form.  
\end{prop}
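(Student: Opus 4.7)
The previous proposition establishes $d\omega = 0$, so it remains to verify non-degeneracy. Since $\omega = \mu_0^*\langle \cdot | \cdot \rangle_0$ and the intersection pairing is non-degenerate on each fibre of $\mathcal{E}$, while $\operatorname{rank}\mathcal{E} = 2n = \dim M$ (the Riemann--Hurwitz computation giving the genus of $\Sigma_0$ as $n$), it suffices to show that the bundle map $\mu_0\colon TM \to \mathcal{E}$ is injective; the rank coincidence will then promote this to a fibrewise isomorphism and transfer non-degeneracy.

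The plan is to invoke the classical identification of $H^1(\Sigma_0,\mathbb{C})$ with meromorphic $1$-forms of the second kind modulo exact meromorphic $1$-forms. If $[U(\psi)] = 0$, then $U(\psi) = df$ for some meromorphic function $f$ on $\Sigma_0(p)$. Since the covering involution $y_0 \mapsto -y_0$ sends $U(\psi)$ to $-U(\psi)$, the function $f$ may be chosen to be anti-invariant, so $f = g(x)\, y_0$ for a unique rational function $g$. Substituting and clearing $y_0$ yields the key identity
\begin{align}
U(Q_0)(x) \;=\; 2g'(x)\, Q_0(x) + g(x)\, Q_0'(x).
\end{align}

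The core of the argument is then a pole-order analysis on $\mathbb{CP}^1$. A pole of order $k \ge 1$ of $g$ at $w_\alpha$ would force the right-hand side to have a pole of order $k + 2m_\alpha$ there, with leading coefficient proportional to $(2k + 2m_\alpha - 1)\, a_{2m_\alpha-1}^{(\alpha)}$, which is nonzero since $2k+2m_\alpha-1$ is odd and $a_{2m_\alpha-1}^{(\alpha)} \neq 0$ by hypothesis; yet $U(Q_0)$ has pole of order at most $2m_\alpha$ at $w_\alpha$, a contradiction. Parallel leading-order arguments at the simple zeros of $Q_0$ (which are disjoint from the $w_\alpha$) and at any other finite point exclude all further finite poles of $g$, so $g$ is a polynomial; matching top degrees at $\infty$ against the polynomial part of $U(Q_0)$ (which has degree at most $2m_\infty - 7$) then forces $g(\infty) = 0$, so $g \equiv 0$. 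Hence $U(Q_0) \equiv 0$, and examining the $(x-w_\alpha)^{-2m_\alpha}$ coefficient isolates the $\partial_{w_\alpha}$-component of $U$ (again using $a_{2m_\alpha-1}^{(\alpha)} \neq 0$), after which linear independence of $\{(x-w_\alpha)^{-i}, x^j\}$ forces every remaining coordinate of $U$ to vanish.

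The main obstacle is the pole-order calculation at $w_\alpha$; its success rests squarely on the non-vanishing of $2k+2m_\alpha-1$, which is odd precisely because all pole orders of $\phi$ are odd. This is exactly where the assumption on $\bar{m}$ enters, and it is the step most likely to fail (and so require modification) outside the odd-order regime.
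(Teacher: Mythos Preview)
Your argument is correct and genuinely different from the paper's.  The paper proves non-degeneracy by computing the matrix of $\omega$ in the Laurent-coefficient coordinates $\{a_i^{(\alpha)},w_\alpha\}$ via the residue formula: it shows the matrix is block-diagonal, one block per pole, and each block is anti-triangular with nonzero anti-diagonal entries (using $a_{2m_\alpha-1}^{(\alpha)}\ne 0$).  You instead prove directly that $\mu_0$ is injective by a pole-order analysis of the identity $U(Q_0)=2g'Q_0+gQ_0'$ that results from $U(\psi)=d(g\,y_0)$.  Your route is closer in spirit to the isomonodromy arguments later in the paper (compare the proof of Lemma~\ref{ansatz}) and avoids the residue bookkeeping; the paper's computation, on the other hand, makes the matrix of $\omega$ explicit, which feeds into the subsequent identification of period coordinates.

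One small comment: your closing remark that the non-vanishing of $2k+2m_\alpha-1$ ``is odd precisely because all pole orders of $\phi$ are odd'' is misleading.  For any integers $k,m_\alpha$ the quantity $2k+2m_\alpha-1$ is odd, so that particular step does not actually use the parity hypothesis on $\bar{m}$.  The odd-order assumption enters elsewhere in the paper (the branching pattern of $\Sigma_0$ at the poles, hence the genus count and the residue-free property of $\psi$), not in this leading-coefficient cancellation.
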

 \begin{proof} 
 It remains to show $\omega$ is non-degenerate.  It is convenient to write $a_{2m_\alpha}^{(\alpha)} \coloneqq w_\alpha$ for each $\alpha = 1,...,N$.  First,  we will show that $\omega$ has a block diagonal decomposition in the coordinate trivialisation
\begin{align}
\bigg\{ \frac{\partial}{\partial a_{i}^{(\alpha)}} \bigg\}_{\alpha = 1,...,N}^{i = 1,...,2m_\alpha} \cup \bigg\{ \frac{\partial}{\partial a_{i}^{(\infty)}} \bigg\}^{i = 0,...,2m_\infty - 7}
\end{align}
for $TM$.  
Then we will show each block is triangular with non-vanishing diagonal entries.  
 At $x = w_\alpha$ we have
\begin{align}\label{order_diff_1}
\frac{\partial y_0}{\partial a_i^{(\alpha)}} = O((x-w_\alpha)^{m_\alpha - \frac{1}{2} - i}),  \quad 
\frac{\partial y_0}{\partial a_j^{(\beta)}} = O((x-w_\alpha)^{m_\alpha - \frac{1}{2}}) 
\end{align}
for $\alpha \ne \beta$ and 
\begin{align}\label{cohomology_coordinate_diff}
\mu_0\bigg(\frac{\partial}{\partial a_i^{(\alpha)}}  \bigg) = \Bigg[ \frac{\partial y_0}{\partial a_i^{(\alpha)}} dx \Bigg] .  
\end{align}
The representative meromorphic $1$-form in (\ref{cohomology_coordinate_diff}) has a pole only at $x = w_\alpha$.  Let $s_\alpha$ be a local coordinate on $\Sigma_0(p)$ at the branch point $x = w_\alpha$ so that $s_\alpha^2 = (x-w_\alpha)$ and $dx = 2s_\alpha ds_\alpha$.  Then 
\begin{align}
\omega \bigg(\frac{\partial }{\partial a_i^{(\alpha)}},  \frac{\partial }{\partial a_j^{(\beta)}} \bigg) = \,  &\underset{x = w_\alpha}{\operatorname{Res}} \bigg( d^{-1} \bigg( \frac{\partial y_0}{\partial a_i^{(\alpha)}}  dx \bigg) \frac{\partial y_0}{\partial a_j^{(\beta)}} dx \bigg) \nonumber +  \,  \underset{x = w_\beta}{\operatorname{Res}} \bigg( d^{-1} \bigg( \frac{\partial y_0}{\partial a_i^{(\alpha)}}  dx \bigg)  \frac{\partial y_0}{\partial a_j^{(\beta)}} dx \bigg) \nonumber \\
= \,  & \underset{s_\alpha = 0}{\operatorname{Res}}(O(s_\alpha^{4m_\alpha+1-2i}) ds_\alpha) +  \underset{s_\beta = 0}{\operatorname{Res}}(O(s_\beta^{4m_\beta+1-2j}) ds_\beta) = 0, 
\end{align}
as $i \le 2m_\alpha$ and  $j \le 2m_\beta$.  Similarly,  at the pole at infinity
\begin{align}\label{order_diff_inf_1}
\frac{\partial y_0}{\partial a_i^{(\alpha)}} = O(x^{-m_\infty+\frac{5}{2}-i}),  \quad \frac{\partial y_0}{\partial a_j^{(\infty)}} = O((x^{j-m_\infty+\frac{5}{2}})).
\end{align}
While,  at a pole $x = w_\alpha$ in $\mathbb{C}$ 
\begin{align}\label{order_diff_inf_2}
\frac{\partial y_0}{\partial a_j^{(\infty)}} = O((x-w_\alpha)^{m_\alpha - \frac{1}{2}}),
\end{align}
so that taking a coordinate $s_\infty^{-2} = x$ about the branch point $x = \infty$ we have 
\begin{align}
\omega\bigg(\frac{\partial }{\partial a_i^{(\alpha)}},  \frac{\partial }{\partial a_j^{(\infty)}} \bigg) = \underset{s_\alpha=0}{\operatorname{Res}}(O(s_\alpha^{4m_\alpha+1-2i})ds_\alpha) + \underset{s_\infty = 0}{\operatorname{Res}}(O(s_\infty^{4m_\infty - 15 - 2j + 2i})ds_\infty)=0,
\end{align}
taking $\alpha = 1,...,N,  i = 1,...,2m_\alpha$ and $j = 0,..., 2m_\infty - 7$.  This gives us a block diagonal decomposition with blocks labelled by $\alpha = 1,...,N$ and $\infty$.  Each block is given by $\omega$ restricted to the vector fields corresponding to the Laurent coefficients at each pole.  For a finite pole $x = w_\alpha$ we have
\begin{align}\label{triangular}
\omega\bigg(\frac{\partial }{\partial a_i^{(\alpha)}},  \frac{\partial }{\partial a_j^{(\alpha)}} \bigg) \nonumber = \,  &\underset{x = w_\alpha}{\operatorname{Res}} \bigg(  d^{-1} \bigg( \frac{\partial y_0}{\partial a_i^{(\alpha)}}  dx \bigg)  \frac{\partial y_0}{\partial a_j^{(\alpha)}} dx \bigg) \\ = \,  &\underset{s_\alpha = 0}{\operatorname{Res}}\bigg( \frac{s_\alpha^{4m_\alpha+1-2i-2j}}{a^{(\alpha)}_{2m_\alpha-1}(2m_\alpha+1-2i)}ds_\alpha+ O(s_\alpha^{4m_\alpha+2-2i-2j})ds_\alpha\bigg),  
\end{align}
which vanishes for $i + j \le 2m_\alpha$.  Since $a_{2m_\alpha-1}^{(\alpha)} \ne 0$ on $M$ (due to the poles of $\phi$ having prescribed orders),  we see that (\ref{triangular}) is non-zero for $i + j = 2m_\alpha + 1$ which shows that the $2m_\alpha \times 2m_\alpha$ matrix with entries (\ref{triangular}) for $i,j = 1,...,2m_\alpha$ is triangular with non-zero diagonal entries (it is triangular in the sense opposite to the usual one). 
Similarly,  for the block corresponding to the pole at infinity,  
\begin{align}\label{triangular_inf}
\omega\bigg(\frac{\partial }{\partial a_i^{(\infty)}},  \frac{\partial }{\partial a_j^{(\infty)}} \bigg) = \,  &\underset{x = \infty}{\operatorname{Res}} \bigg( d^{-1} \bigg( \frac{\partial y_0}{\partial a_i^{(\infty)}}  dx \bigg)  \frac{\partial y_0}{\partial a_j^{(\infty)}} dx \bigg) 
\end{align}
vanishes for $i + j \le 2m_\infty-8$ and is non-zero for $i + j = 2m_\infty - 7$,  so that the last block is triangular.  
Accordingly,  $\omega$ is non-degenerate.  
\end{proof}
\begin{cor}
$\mu_0: TM \to \mathcal{E}$ is an isomorphism.  
\end{cor}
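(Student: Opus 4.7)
The plan is to deduce this corollary directly from the non-degeneracy of $\omega$ established in the preceding proposition, together with a simple rank count.

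First I would observe that both $TM$ and $\mathcal{E}$ are holomorphic vector bundles of the same rank over $M$: by construction $\operatorname{rank} TM = \dim M = 2n$, while $\mathcal{E}_p = H^1(\Sigma_0(p),\mathbb{C})$ has complex dimension $2n$ since $\Sigma_0(p)$ is a smooth curve of genus $n$ (by the identification \eqref{ngendef}). Hence it suffices to prove that $\mu_0$ is fibrewise injective; injectivity of a linear map between vector spaces of the same finite dimension automatically yields an isomorphism, and a fibrewise isomorphism of holomorphic vector bundles is a bundle isomorphism.

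The core step is then injectivity, which follows immediately from the definition $\omega = \mu_0^*\langle \cdot \,|\,\cdot\rangle_0$. Indeed, if $U \in T_pM$ lies in the kernel of $\mu_0$, then for every $V \in T_pM$
\begin{equation}
\omega(U,V) \;=\; \langle \mu_0(U) \,|\, \mu_0(V)\rangle_0 \;=\; \langle 0 \,|\, \mu_0(V)\rangle_0 \;=\; 0,
\end{equation}
so $U \in \ker \omega_p$. Non-degeneracy of $\omega$, just established in the previous proposition, forces $U = 0$.

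There is essentially no obstacle here: the proposition did the work by showing $\omega$ is non-degenerate via the block-triangular argument, and the corollary is simply the observation that $\mu_0$ must therefore be injective (and hence, by the rank count, an isomorphism). The only conceptual point worth emphasising is the matching of ranks, which is why the identification $n = (\dim M)/2$ in \eqref{ngendef} — itself a Riemann–Hurwitz computation for the double cover $\Sigma_0 \to \mathbb{CP}^1$ branched at the zeros and odd-order poles of $\phi$ — is essential to this step.
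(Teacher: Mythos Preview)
Your proof is correct and follows exactly the same approach as the paper: injectivity of $\mu_0$ from the non-degeneracy of $\omega = \mu_0^*\langle\cdot|\cdot\rangle_0$, combined with the rank count $\operatorname{rank} TM = \operatorname{rank}\mathcal{E} = 2n$. The paper states this in two sentences; your version simply spells out the details.
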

\begin{proof}
$\mu_0$ must be injective since $\omega = \mu_0^*\langle \cdot | \cdot \rangle_0$ is non-degenerate.  $TM$ and $\mathcal{E}$ are bundles of rank $2n$,  so $\mu_0$ is an isomorphism.  
\end{proof}
We may pullback the lattices given by the integral span of $\{\nu_{a}\}_{a=1}^{2n}$ by the isomorphism $\mu_0$ to obtain a bundle of lattices $\mathcal{G} \to TM$.  \par Consider the functions (periods)
\begin{align}
z^{a} = \oint_{\gamma_{a}} \psi.  
\end{align}
\begin{prop}
The functions $\{z^{a}\}_{a=1}^{2n}$ give local coordinates on $M$.  
\end{prop}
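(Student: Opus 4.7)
The plan is to show that the differentials $dz_a$ at each point of $M$ are linearly independent, which by the (holomorphic) inverse function theorem will imply the $z_a$ form a local system of coordinates. Since $M$ has complex dimension $2n$ and we have exactly $2n$ functions $z_a$, linear independence of their differentials is both necessary and sufficient.

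The key observation is that the $dz_a$ are precisely the components of $\mu_0$ with respect to the parallel frame $\{\nu_a\}$. Indeed, for any tangent vector $U \in T_pM$, differentiating the defining integral and using that $\{\gamma_a\}$ is locally constant (via the Gauss--Manin connection) gives
\begin{align}
dz_a(U) \;=\; U\bigg(\oint_{\gamma_a} \psi\bigg) \;=\; \oint_{\gamma_a} U(\psi),
\end{align}
which is exactly the coefficient of $\nu_a$ in the expansion
$$
\mu_0(U) \;=\; \sum_{a=1}^{2n} \bigg(\oint_{\gamma_a} U(\psi)\bigg)\,\nu_a
$$
already computed above. In other words, under the identification of $\mathcal{E}_p$ with $\mathbb{C}^{2n}$ via the basis $\{\nu_a\}$, the map $\mu_0$ has components $(dz_1,\dots,dz_{2n})$.

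Now I would invoke the corollary just proved: $\mu_0 \colon TM \to \mathcal{E}$ is an isomorphism of bundles. Consequently $\mu_0|_{T_pM}$ is a linear isomorphism at every point $p \in M$, so the covectors $dz_1,\dots,dz_{2n} \in T_p^*M$ are linearly independent. By the holomorphic inverse function theorem, the map
\begin{align}
z \colon M \to \mathbb{C}^{2n}, \qquad p \mapsto (z_1(p),\dots,z_{2n}(p))
\end{align}
is a local biholomorphism, hence the $z_a$ furnish local coordinates on $M$.

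There is no serious obstacle here beyond assembling these observations, since the heavy lifting (non-degeneracy of $\omega$, and hence invertibility of $\mu_0$) has already been carried out. The only mild subtlety is that the functions $z_a$ are, strictly speaking, defined only locally on $M$ because a continuous choice of the canonical basis $\{\gamma_a\}$ of cycles exists only locally; but this is exactly the level of generality at which a local coordinate system is meaningful, so the statement is consistent.
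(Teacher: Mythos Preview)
Your proof is correct. Both your argument and the paper's reduce to the non-degeneracy of $\omega$ already established, but the packaging differs slightly. You invoke the corollary that $\mu_0$ is an isomorphism and observe that the $dz_a$ are its components in the parallel frame $\{\nu_a\}$, whence linear independence is immediate. The paper instead writes out Riemann's bilinear relations explicitly to obtain
\[
\omega\bigg(\frac{\partial}{\partial a_i^{(\alpha)}},\frac{\partial}{\partial a_j^{(\beta)}}\bigg)
= \sum_{a,b} \frac{\partial z_a}{\partial a_i^{(\alpha)}}\,\omega_{ab}\,\frac{\partial z_b}{\partial a_j^{(\beta)}},
\]
and concludes that the Jacobian $\partial z_a/\partial a_i^{(\alpha)}$ is non-degenerate because the left-hand side is. Your route is marginally cleaner as a proof of the bare statement; the paper's has the side benefit of simultaneously exhibiting the formula $\omega = \tfrac{1}{2}\sum_{a,b}\omega_{ab}\,dz^a\wedge dz^b$, which is used immediately afterward.
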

\begin{proof}
As before it is convenient to take $a_{2m_\alpha}^{(\alpha)} \coloneqq w_\alpha$.  From Riemann's bilinear relations
\begin{align}
\Bigg\langle \frac{\partial \psi}{\partial a_{i}^{(\alpha)}} \Bigg| \frac{\partial \psi}{\partial a_{j}^{(\beta)}}   \Bigg\rangle &= \sum_{a=1}^{n} \underset{\gamma_{a}}{\oint}\frac{\partial \psi}{\partial a_{i}^{(\alpha)}}  \underset{\gamma_{a+n}}\oint\frac{\partial \psi}{\partial a_{j}^{(\beta)}} - \underset{\gamma_{a+n}}\oint \frac{\partial \psi}{\partial a_{i}^{(\alpha)}}  \underset{\gamma_{a}}\oint \frac{\partial \psi}{\partial a_{j}^{(\beta)}} \nonumber \\&= \sum_{a=1}^{n} \frac{\partial z^{a}}{\partial a_{i}^{(\alpha)}}\frac{\partial z^{a+n}}{\partial a_{j}^{(\beta)}} - \frac{\partial z^{a+n}}{\partial a_{i}^{(\alpha)}}\frac{\partial z^{a}}{\partial a_{j}^{(\beta)}} = \sum_{a=1}^{2n}\sum_{b=1}^{2n} \frac{\partial z^{a}}{\partial a_{i}^{(\alpha)}} \omega_{ab}\frac{\partial z^{b}}{\partial a_{j}^{(\beta)}},   \label{Riemann_bilinear_periods}
\end{align}
where $\omega_{ab}$ is the standard symplectic matrix of rank $2n$.  
The matrix with entries given by the left-hand side is the (non-degenerate) matrix of $\omega$ in the coordinate basis for the coordinates $\{a_{i}^{(\alpha)}\hspace{-1mm},w_\alpha\}$ which implies the Jacobian matrix must be non-degenerate.  
\end{proof}
In fact,  the equation (\ref{Riemann_bilinear_periods}) is the statement that 
\begin{align}
\omega =  \frac{1}{2}\sum_{a=1}^{2n} \sum_{b=1}^{2n} \omega_{ab}dz^{a} \wedge dz^{b}.  
\end{align}
The lattice $\mathcal{G} \to TM$ is given by the span of vector fields with integer coefficients with respect to the $\{z^{a}\}_{a=1}^{2n}$ coordinate basis.  The coordinates $\{z^{a}\}_{a=1}^{2n}$ are therefore flat for the associated connection $\nabla$.  The vector field $E_0$ defined by
\begin{align}
E_0 = \sum_{a=1}^{2n} z^{a}\frac{\partial}{\partial z^{a}} 
\end{align}
satisfies
\begin{align}
\nabla E_0 = \operatorname{Id}_{TM}.  
\end{align}
Altogether we have shown
\begin{thm}
The lattice $\mathcal{G}\to TM$,  together with $\omega$ and the vector field $E_0$ define a period structure with symplectic form on $M$.  
\end{thm}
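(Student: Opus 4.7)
The theorem is essentially a packaging of the results established earlier in the section, so my plan is to verify each clause of the definition of a period structure in turn, citing the constructions already made.

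First I would confirm that $\omega$ is a holomorphic symplectic form. Closedness was established in Proposition~\ref{residue_formula_prop} via the residue calculation and integration by parts, and non-degeneracy was shown in the subsequent proposition by exhibiting the block-triangular structure in the coordinate basis $\{\partial/\partial a_i^{(\alpha)}\}$ with non-vanishing diagonal entries. Holomorphicity is immediate from the definition $\omega = \mu_0^*\langle\cdot|\cdot\rangle_0$, as $\mu_0$ and the intersection pairing on $\mathcal{E}$ are both holomorphic.

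Next I would verify that $\omega$ takes rational (in fact integer) values on the lattice. This is immediate from the identity
\begin{align}
\omega =  \frac{1}{2}\sum_{a=1}^{2n} \sum_{b=1}^{2n} \omega_{ab}\,dz^{a} \wedge dz^{b}
\end{align}
established via Riemann's bilinear relations, combined with the fact that by construction $\mathcal{G}$ is the span of vector fields with integer coefficients with respect to the coordinate basis $\{\partial/\partial z_a\}$. The holomorphicity of the bundle $\mathcal{G} \to TM$ follows from its definition as the $\mu_0$-pullback of the locally constant lattice generated by the Gauss-Manin-parallel sections $\nu_a$, together with the holomorphicity of the isomorphism $\mu_0$ established in the corollary above.

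Finally I would check the condition $\nabla E_0 = \operatorname{Id}_{TM}$. Because the $z_a$ are linear combinations (with integer coefficients) of the generators of the lattice $\mathcal{G}$ in each fibre, the coordinate vector fields $\{\partial/\partial z_a\}$ are parallel for the flat connection $\nabla$ associated to $\mathcal{G}$. Hence for any vector field $V$,
\begin{align}
\nabla_V E_0 = \nabla_V\!\bigg( z_a \frac{\partial}{\partial z_a} \bigg) = V(z_a)\frac{\partial}{\partial z_a} = V,
\end{align}
which is exactly $\nabla E_0 = \operatorname{Id}_{TM}$. Since each of the four conditions in the definition of a period structure with symplectic form has now been verified, the theorem follows. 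None of these steps presents a real obstacle; the substantive work was done in the preceding propositions (non-degeneracy of $\omega$, the fact that the $z_a$ give local coordinates, and the Riemann bilinear relation expressing $\omega$ in the period basis), so the present theorem is essentially a summary.
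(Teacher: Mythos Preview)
Your proposal is correct and mirrors the paper's approach: the theorem is stated as ``Altogether we have shown'' with no separate proof, since each clause of the definition was established in the preceding propositions (closedness and non-degeneracy of $\omega$, the period coordinates $z_a$, the formula $\omega = \frac{1}{2}\sum \omega_{ab}\,dz^a \wedge dz^b$, and the explicit check that $\nabla E_0 = \operatorname{Id}_{TM}$). One minor phrasing slip: the $z_a$ are coordinates on $M$, not elements of the lattice fibre; what you mean (and use) is that the coordinate vector fields $\partial/\partial z_a$ generate $\mathcal{G}$ over $\mathbb{Z}$ and are therefore $\nabla$-parallel.
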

\begin{rem}
The above construction differs from the conjectured general construction of Joyce structures of class $S[A_1]$ outlined in \cite{BdM},  which uses the natural period structure on $M$ coming from Donaldson-Thomas theory.  The difference comes from considering integral homology of the curve $\Sigma_0(p)$ punctured at the set of poles $\mathcal{P}_0(p)$ of $\psi$. Our lattice is dual to the integral cycles $\{\gamma_{a}\}$ defining a basis for $H_1(\Sigma_0(p),\mathbb{Z})$ whereas the integral lattice in the proposed construction is dual to a basis for $H_1^{-}(\Sigma_0(p) \setminus \mathcal{P}_0(p),\mathbb{Z})$,  which is the subgroup of $H_1(\Sigma_0(p) \setminus \mathcal{P}_0(p),\mathbb{Z})$ consisting of classes anti-invariant under the covering involution.  The natural homomorphism $H_1^{-}(\Sigma_0(p) \setminus \mathcal{P}_0(p),\mathbb{Z}) \to H_1(\Sigma_0(p),\mathbb{Z})$ induced by the inclusion $\Sigma_0(p) \setminus \mathcal{P}_0(p) \hookrightarrow \Sigma_0(p)$ is injective and of full-rank but not in general surjective.  \par 
The resulting Joyce structures will share the skew-form $\omega$ on $M$,  underlying hyper-K\"ahler metric and Euler vector field,  but the period coordinates will be different,  related by a constant linear transformation.  
\end{rem}
\section{Potentials and the complex manifold $X$}\label{potential_setup}
Having introduced the geometry on the $2n$-dimensional moduli space $M$ of quadratic differentials,  we turn our attention to a bundle $X \to M$ with half-dimensional fibres,  the points of which will,  together with a spectral parameter $\hbar \in \mathbb{C}^*$ determine an ODE.  \par 
The potentials considered here may be expanded in orders of $\hbar$ as follows: 
\begin{align}\label{hbar_expansion_potential}
Q(x) = \frac{Q_0(x)}{\hbar^2}+\frac{Q_1(x)}{\hbar}+Q_2(x). 
\end{align}
The leading order term in $\hbar$ is determined by the quadratic differential $\phi = Q_0(x)dx^2$,  that is,  a point $p \in M$.  \par 
Let $\mathcal{B}(p)$ denote the set of branch points in $\mathbb{C}$ of the curve $\Sigma_0(p)$.  Specifically,  $\mathcal{B}(p)$ consists of the $N$ poles and the zeroes of $\phi$ in $\mathbb{C}$.   
We introduce the complex manifold $X$ that parametrises the choice of point $p \in M$ together with a \textit{deformation term}
\begin{align}
Q_1(x) = \sum_{I=1}^{n} \frac{p_I}{x-q_I} + R(x),  
\end{align}
where the 
$q_{I} \in \mathbb{C} \setminus \mathcal{B}(p)$ are distinct,  $p_I$ is fixed up to a sign by $p_I^2 = Q_0(q_I)$ and where $R(x)$ is the general rational function so that the fibre $X_p$ may be thought of,  via defining 
\begin{align}\label{sigma}
\sigma \coloneqq \frac{Q_1(x)}{2y_0}dx,  
\end{align}
as the space of meromorphic $1$-forms $\sigma$ on $\Sigma_0(p)$ with simple poles with residues $\pm 1/2$ at each pair of points corresponding to $x = q_I$ and no other poles.  This means,  $R(x)$ must be the general rational function with a pole of order at most $m_\infty-4$ at infinity and poles of order at most $m_\alpha$ at the points $w_\alpha$.  

It turns out it is convenient to use parameters $\{v_I\}$ for the choice of $R(x)$ so that 
\begin{align}\label{Rfrac}
R(x) = \frac{\displaystyle{\sum_{I=1}^{n}F(q_I)\Big(2p_Iv_I - \sum_{K \ne I}^{n}\frac{p_K}{q_I-q_K}\Big)\prod_{J \ne I}^n \frac{(x-q_J)}{(q_I-q_J)}}}{F(x)},
\end{align}
where 
\begin{align}
F(x) = \prod_{\alpha=1}^{N}(x-w_\alpha)^{m_\alpha}.  
\end{align}

To see that (\ref{Rfrac}),  for fixed $q_I$,  gives the general such rational function note that,  multiplying the formula (\ref{Rfrac}) through by $F(x)$ yields the general polynomial of degree at most $n-1$,  since it is specified by its values at $n$ points $q_I$ which can be freely chosen. \par 
Since $\{a_{i}^{(\alpha)}\hspace{-1mm},w_\alpha,  q_I,  v_I\}$ give local coordinates,  we see that $X$ is $4n$-dimensional.  \par 

The remaining term in the potential (\ref{hbar_expansion_potential}) is completely determined by the point in $X$ if we impose two conditions on $Q(x)$.  The first is that $Q(x)$ has a particular form of an \textit{apparent singularity} at $x = q_I$.  It is that $Q(x)$ has Laurent expansions
\begin{align}\label{apparent}
Q(x) = \frac{3}{4(x-q_I)^2} + \frac{u_I}{x-q_I} + u_I^2 + O(x-q_I)
\end{align}
for some functions $u_I$ on $X$.  The second condition is that $Q_2(x)$,  as with $Q_1(x)$,  has poles at the poles of $Q_0(x)$ in such a way that $Q_2(x)/2y_0 \,  dx$ has poles only at  the points corresponding to $x = q_I$.  \par 
That the existence of apparent singularities is necessary for the existence of isomonodromic deformations of scalar ODEs was shown by Poincar\'e \cite{Po}. The monodromy of the equation (\ref{schro}) is fixed at these points in such a way that analytic continuation around the singularity changes the sign of the solution.  \par 
The reason for the choice of parameters $\{v_I\}$ is that 
\begin{align}
Q_1(x) := \frac{p_I}{x-q_I} + 2p_Iv_I + O(x-q_I).  
\end{align}
Then,  since $Q_0(x) = p_I^2 + O(x-q_I)$,  the condition (\ref{apparent}) implies $u_I = p_I/\hbar + v_I$ and the two conditions together force
\begin{align}
Q_2(x) = \sum_{I=1}^n \frac{3}{4(x-q_I)^2} +  \sum_{I=1}^n \frac{v_I}{x-q_I} + S(x),  
\end{align}
where
\begin{align}
S(x) = \frac{\displaystyle{{\sum_{I=1}^{n} F(q_I) \Big(v_I^2 - \sum_{K \ne I}^n\frac{3}{4(q_I-q_K)^2} - \sum_{K \ne I}^n \frac{v_K}{q_I-q_K}\Big)\prod_{J \ne I}^n \frac{(x-q_J)}{(q_I-q_J)}}}}{F(x)}.  
\end{align}
We consider the equation $y^2 = Q(x)$ determined by a $\xi \in X$ and $\hbar \in \mathbb{C}^*$.  In Appendix \ref{generic_non_singularity},  we prove a fact that we will need later,  that for $(\xi,\hbar) \in X \times \mathbb{C}^{*}$ in the complement of the vanishing set of a non-zero holomorphic function $\Delta: X \times \mathbb{C}^* \to \mathbb{C}$,  $Q(x)$ has simple zeroes.  The Riemann-Hurwitz formula implies that,  given $Q(x)$ has simple zeroes,  the genus of the smooth algebraic curve $\Sigma(\xi,\hbar)$ defined by $y^2 = Q(x)$ is $2n$.  
\begin{rem}
One may note that there are redundant parameters in the sense each choice of potential $Q(x)$ corresponds to an $n$-dimensional submanifold of the complex manifold $X$.  The application to Joyce structures motivates preserving these extra parameters.  
\end{rem}

\begin{rem}
Letting $\mathcal{S}_0(p) \subset \mathbb{C}$ be the affine part of $\Sigma_0(p)$,  there is a biholomorphism 
\begin{align}
X_p \cong (\mathcal{S}_0(p) \setminus \mathcal{B}(p) \times ...  \times \mathcal{S}_0(p) \setminus \mathcal{B}(p)) \setminus \delta \times \mathbb{C}^n
\end{align} 
where $\delta$ is the diagonal.  
\end{rem}
To assist reading,  we summarise conventions for indexing various data in a table:
\begin{table}[h]
\caption{Conventions for indices}
\centering
\begin{tabular}{|c|c|c|}
\hline
Data & Index style & Range  \\
\hline
Poles $w_\alpha$ of $\phi$/$Q_0$  & Greek ($\alpha,  \beta,...$) & $\alpha = \infty, 1,...,N$ \\
Laurent coefficients $a_i^{(\alpha)}$ of $Q_0$ poles & Latin ($i,j,...$) & $i = 1,...,2m_\alpha - 1,  \alpha \ne \infty  $  \\ 
& & $i = 0,...,2m_\infty - 7,  \alpha = \infty$ \\
Poles $q_I$ of $Q_1$ (apparent singularities) & Latin ($I,J,...$) & $I = 1,...,n$,  $n = \mathrm{dim} \,  M/2$ \\
e.g.  Periods $z_a$,  fibre coordinates $\theta_a$ & Latin ($a,b,...$) & $a = 1,...,2n$ \\
\hline
\end{tabular}
\end{table} 

We also summarise notation for the complex manifolds and algebraic curves involved:
\normalsize
\begin{table}[h]
\caption{Complex manifolds}
\centering
\begin{tabular}{|c|c|c|}
\hline
Manifold & Dimension & Coordinates \\
\hline 
$M = \operatorname{Quad}(\bar{m})$ & $2n$ & $\{a_{i}^{(\alpha)}\hspace{-1mm},w_\alpha\}$ \\
$X$ & $4n$ & $\{a_{i}^{(\alpha)}\hspace{-1mm},w_\alpha,  q_I,  v_I\}$  \\
\hline
\end{tabular}
\end{table} 
\begin{table}[h]
\caption{Algebraic curves}
\centering
\begin{tabular}{|c|c|c|c|c|}
\hline
Curve & Defining equation & Dependence & Tautological 1-form & Genus \\
\hline 
$\Sigma_0$ & $y_0^2 = Q_0(x)$ & $p \in M$ & $\psi = y_0 dx$ & $n$   \\
$\Sigma$ & $y^2 = Q(x)$ & $\xi \in X,  \hbar \in \mathbb{C}^*$ & $\Psi = y dx$ & $2n$  \\
\hline
\end{tabular}
\end{table} 
\normalsize

\section{Isomonodromic deformations and the cohomology of $\Sigma(\xi,\hbar)$} \label{curve_approach}
By \textit{isomonodromic flow} we will mean a one-parameter family of diffeomorphisms of $X$ preserving the \textit{partial monodromy data} of the equation (\ref{schro}) consisting of the connection matrices and Stokes' multipliers as defined in \cite{M}.  \par We have the following necessary and sufficient condition for a flow to be isomonodromic:
 A flow $(a_{i}^{(\alpha)}(t),w_\alpha(t),  q_I(t),  v_I(t))$ depending  on an auxiliary parameter $t$ is isomonodromic if and only if there exists a matrix $B(x,t)$ of meromorphic functions such that the connection
\begin{equation*}
\nabla = d - \begin{bmatrix} 0 & 1 \\ Q(x,t) & 0 \end{bmatrix} dx - B(x,t)dt 
\end{equation*}
is flat.   This  is a generalisation \cite{U,  JMU} of Schlesinger's result for ODEs with regular singular points. \par  The general solution for  $B(x,t)$ such that $\nabla$ is flat is
\renewcommand*{\arraystretch}{1.5}
\begin{equation*}
B(x,t) =  \begin{bmatrix} -\frac{1}{2}\frac{\partial A}{\partial x} & A \\  AQ -  \frac{1}{2}\frac{\partial^2A }{\partial x^2} & \frac{1}{2}\frac{\partial A}{\partial x} \end{bmatrix} 
\end{equation*}
for some function $A(x,t)$ which satisfies
\begin{equation}\label{fuchs}
-2\frac{\partial Q}{\partial t} = \frac{\partial^3 A}{\partial x^3} - 4Q\frac{\partial A}{\partial x} - 2\frac{\partial Q}{\partial x}A.  
\end{equation}
This is an equation that appears in,  for example,  \cite{BM} and much earlier in the case of Fuchsian singularities \cite{F}.  \par 
We first need to constrain the function $A(x,t)$ for an isomonodromic flow given the definition of the potential $Q(x)$ in \S \ref{potential_setup}.  \begin{lemma}\label{ansatz}
Any meromorphic function $A(x,t)$ defined for all $x \in \mathbb{CP}^1$ satisfying \rm{(\ref{fuchs})} takes the form
\begin{align}\label{A_ansatz}
A(x,t) = \sum_{I=1}^{n} \frac{f_I(t)}{(x-q_I(t))}
\end{align}
for some holomorphic functions $f_I(t)$.  
\end{lemma}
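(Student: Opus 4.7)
The plan is to show that $A(x,t)$, as a rational function of $x$ for each fixed $t$, has no poles except simple poles at the apparent singularities $q_I(t)$ and vanishes at infinity; the claim then follows by partial fraction decomposition. The analysis is purely local at each potential pole on $\mathbb{CP}^1$ and leverages the prescribed pole orders of $Q$ from Section \ref{potential_setup}.

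First, at a pole $w_\alpha$ of $Q$ (inherited from $Q_0$ at order $2m_\alpha - 1$), suppose $A$ has a pole of order $k \ge 1$. The terms $-4Q\,\partial A/\partial x$ and $-2(\partial Q/\partial x)A$ on the right-hand side of (\ref{fuchs}) dominate and both diverge like $(x - w_\alpha)^{-(k + 2m_\alpha)}$, with combined leading coefficient proportional to $4k + 4m_\alpha - 2$, which is nonzero for every integer $k$ since $4m_\alpha - 2 \equiv 2 \pmod 4$. Since $\partial Q/\partial t$ has pole order at most $2m_\alpha$ at $w_\alpha$ (the worst contribution coming from $\dot w_\alpha$ acting on $Q_0$), strictly less than $k + 2m_\alpha$, this leading coefficient must vanish, forcing the leading Laurent coefficient of $A$ to be zero; induction then rules out any pole at $w_\alpha$. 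The analogous computation at $x = \infty$, applied to a putative polynomial part $A \sim a\,x^K$, produces an obstruction coefficient proportional to $4K + 4m_\infty - 10$, nonzero for every integer $K \ge 0$, and hence $A(\infty) = 0$. At any $x_0 \notin \{w_\alpha, q_I, \infty\}$, $Q$ is holomorphic and a pole of $A$ at $x_0$ would make $\partial^3 A/\partial x^3$ the unique singular contributor on the right-hand side, contradicting the regularity of the left.

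The remaining step is at $x = q_I$, where the precise shape (\ref{apparent}) of the apparent singularity is essential. Substituting $A = a(x - q_I)^{-k} + \cdots$ into (\ref{fuchs}) and using (\ref{apparent}), the coefficient of $(x - q_I)^{-(k+3)}$ on the right-hand side computes to
\[
-a\bigl(k(k+1)(k+2) - 3k - 3\bigr) \;=\; -a(k-1)(k+1)(k+3).
\]
Since $\partial Q/\partial t$ has pole order at most $3$ at $q_I$ (from $\dot q_I$ hitting the $3/(4(x-q_I)^2)$ piece), for any $k \ge 2$ this coefficient must vanish, contradicting its explicit nonvanishing; only $k = 1$ survives, at which the factor $(k-1)$ vanishes automatically and the equation imposes no constraint on $a$. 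Combined with $A(\infty) = 0$ and the absence of other poles, partial fractions give $A = \sum_{I=1}^n f_I(t)/(x - q_I(t))$ as claimed. The main obstacle is really this cubic identity at $q_I$: the specific numerical coefficient $3/4$ in (\ref{apparent}) is exactly what places $k = 1$ among the roots of $k(k+1)(k+2) - 3k - 3$, so that simple poles of $A$ are precisely what isomonodromy permits. The rest of the argument is bookkeeping of pole orders.
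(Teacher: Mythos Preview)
Your argument is essentially the paper's own, and the overall strategy is correct. There is one small case you have glossed over: when $m_\alpha = 1$ (a simple pole of $Q_0$ at $w_\alpha$, which the hypotheses permit), the terms $-4Q\,\partial_x A$ and $-2(\partial_x Q)A$ have pole order only $k+2m_\alpha = k+2$, while $\partial_x^3 A$ has order $k+3$, so it is $\partial_x^3 A$ that dominates, not the $Q$-terms as you asserted. Its leading coefficient $-k(k+1)(k+2)a$ is nonzero for every $k\ge 1$, and since $\partial_t Q$ has pole order at most $2$ there, this again forces $a=0$; so the conclusion survives, but your stated leading coefficient $4k+4m_\alpha-2$ is not the one that does the work in this case. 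The paper separates out precisely this case for the same reason. Your factorisation $(k-1)(k+1)(k+3)$ at the apparent singularities and the remark on the role of the coefficient $3/4$ are a nice gloss on the paper's cubic.
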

\begin{proof} 
Consider a fixed value of $t \in \mathbb{C}$.  We will suppress the dependence of functions on $t$ in our notation.  Clearly,  $A(x)$ can only have poles at poles of $Q(x)$ otherwise the right-hand side of (\ref{fuchs}) would have a pole where the left-hand side does not.  

If the leading term of the Laurent expansion of $A$ at the point $x = q_I$ is,  up to scaling by a constant,  $(x-q_I)^{-M_I}$,  $M_I \ge 1$,  then equating coefficients of the $(x-q_I)^{-M_I-3}$ term in the Laurent expansion of (\ref{fuchs}) yields
\begin{align}\label{m_relation}
-M_I(M_I+1)(M_I+2) + 3M_I + 3 = 0
\end{align}
and the only positive solution is $M_I = 1$.  So $A$ has at most a simple pole at $x = q_I$.\par 
Next,  assume $A$ has a pole of order $M_\alpha \ge 1$ at $x = w_\alpha$ so 
\begin{align}
A(x) = \sum_{i=-M_\alpha}^{\infty} A_{i}^{(\alpha)}(x-w_\alpha)^{i}
\end{align}
with $A_{-M_\alpha}^{(\alpha)} \ne 0$.  We have
\begin{align}
Q(x) = \frac{a_{2m_\alpha - 1}^{(\alpha)}}{\hbar^2(x-w_\alpha)^{2m_\alpha-1}} + O\Big((x-w_\alpha)^{-(2m_\alpha-2)}\Big)
\end{align} 
and we see the various terms in (\ref{fuchs}) have poles of no greater order than the integers indicated in braces below:
\begin{align}
-2 \hspace{-2mm}\underbrace{\frac{\partial Q}{\partial t}}_{2m_\alpha} = \underbrace{\frac{\partial^3 A}{\partial x^3}}_{M_\alpha + 3} -  \hspace{1mm} 4\underbrace{Q\frac{\partial A}{\partial x} - 2\frac{\partial Q}{\partial x}A}_{M_\alpha+2m_\alpha}.  
\end{align}
We need to consider some different cases depending on the order of the pole of $Q_0(x)$ at $x = w_\alpha$.  \par 
If $m_\alpha = 1$ then inspecting the $(x-w_{\alpha})^{-M_\alpha - 3}$ term of $(\ref{fuchs})$ we get $A_{-M_\alpha}^{(\alpha)}  = 0$ and so $A$ cannot have a pole at $x = w_\alpha$ if $Q_0(x)$ has a simple pole there.  \par If $m_\alpha > 1$ then equating coefficients of the leading term gives 
\begin{align}
4M_\alpha + 2(2m_\alpha-1) = 0
\end{align}
and there are no integers $M_\alpha$ satisfying this equation.  So $A$ does not have a pole at $w_\alpha$.  \par Next,  we check that $A$ has a zero at infinity.  Laurent expand about $x = \infty$ 
\begin{align}
A(x) = \sum_{i=-\infty}^{M_\infty} A_{i}^{(\infty)} x^{i}
\end{align}
and assume $A_{M_\infty}^{(\infty)} \ne 0$ for $M_\infty \ge 0$. 
Near $x = \infty$ the Laurent expansions of the terms in (\ref{fuchs}) contain terms of order no greater than the integers indicated below:
\begin{align}
-2 \hspace{-2mm}\underbrace{\frac{\partial Q}{\partial t}}_{2m_\infty-7} = \underbrace{\frac{\partial^3 A}{\partial x^3}}_{M_\infty - 3} -  \hspace{1mm} 4\underbrace{Q\frac{\partial A}{\partial x} - 2\frac{\partial Q}{\partial x}A}_{2m_\infty + M_\infty - 6}.  
\end{align}
Recall $m_\infty \ge 3$.  The $x^{2m_\infty + M_\infty - 6}$ term gives 
\begin{align}
-4M_\infty - 2(2m_\infty-5) = 0
\end{align}
and there are no integers $M_\infty,  m_\infty$ which satisfy this equation.  \par 
We have shown $A$ is a rational function with poles only at $x = q_i$,  which are simple,  and a zero at infinity.  This implies we may write it like (\ref{A_ansatz}).  \end{proof} \par 
For each $\hbar \in \mathbb{C}^*$ let $L(\hbar) \subseteq TX$ be the distribution spanned by the generators of isomonodromic flows.  
Fix a point $\xi \in X$ and $\hbar \in \mathbb{C}^*$ such that $\Sigma(\xi,\hbar)$ is non-singular.  Substituting $y^2 = Q(x)$ into (\ref{fuchs}) gives a condition on the tautological meromorphic $1$-form $\Psi \coloneqq y \, dx$ on $\Sigma(\xi,\hbar)$ under an isomonodromic deformation.  It is 
\begin{align}\label{schlesinger_tautological}
\frac{dy}{dt}dx = d(A \cdot y) - \frac{1}{4y}\frac{\partial^3 A}{\partial x^3} dx.
\end{align}
The $1$-form $\Psi$ has poles of order $2m_\alpha-2$ at the branch points $w_\alpha$ with vanishing residue (corresponding to the poles of $\psi$),  and $2n$ additional simple poles at the $n$ pairs of points corresponding to $q_I$ with residues $\pm \sqrt{3}/2$.  In particular,  the residues are constant on $X$ and so the left-hand side of (\ref{schlesinger_tautological}) is a meromorphic $1$-form with vanishing residues and represents a cohomology class.  \par 
Then,  taking the cohomology class of both sides
\begin{align}\label{cohomology_condition}
\Bigg[\frac{dy}{dt}dx\Bigg] = -[\kappa(A)] \in H^1(\Sigma(\xi,\hbar),\mathbb{C}),  
\end{align}
where 
\begin{align}
\kappa(A) = \frac{1}{4y}\frac{\partial^3 A}{\partial x^3} dx.
\end{align}
Similarly to \S\ref{M_defn} we use the notation $U(\Psi) \coloneqq U(y)dx$.  Consider now the linear map
\begin{align}
 \mu(\xi,\hbar) : T_{\xi}X &\to H^1(\Sigma(\xi,\hbar),\mathbb{C}) \\
 U &\mapsto [U(\Psi)] \nonumber.
 \end{align}
This map has an invariant definition in terms of the Gauss-Manin connection for the family of curves,  varying the point in $X$ while fixing $\hbar$: $\Psi$ represents an equivalence class of elements of $H^1(\Sigma(\xi,\hbar),\mathbb{C})$ differing by elements that,  due to the residues being constant on $X$,  are parallel for the Gauss-Manin connection.  $[U(\Psi)]$ is the derivative with respect to the Gauss-Manin connection in the direction $U$,  of any of these representatives.  \par
We will see that this map is not an isomorphism.  \par We may define a  $2$-form $\Omega$ on $T_{\xi}X$ by pulling-back the intersection pairing
$$
 \langle \cdot | \cdot \rangle: H^1(\Sigma(\xi,\hbar),\mathbb{C}) \times H^1(\Sigma(\xi,\hbar),\mathbb{C}) \to \mathbb{C}
$$
on the algebraic curve $\Sigma(\xi,\hbar)$ by $\mu(\xi,\hbar)$.  The same argument as Proposition \ref{residue_formula_prop} shows that there is a formula for this $2$-form in terms of a sum of residues.  In particular,  given $U,V \in T_{\xi}X$ we have 
\begin{align}\label{explicit}
\Omega(U,V) = 2\pi i  \hspace{-2mm}  \sum_{x \in \Sigma({\xi,\hbar})}    \underset{x}{\operatorname{Res}}\big(d^{-1} U(\Psi)V(\Psi)\big).
\end{align}
The right-hand side of the formula (\ref{explicit}) is well-defined as a sum of residues at some poles $\{w_\alpha, q_I\}$ of locally defined meromorphic functions for all $\xi \in X$ and $\hbar \in \mathbb{C}^*$.  Therefore,  for each $\hbar \in \mathbb{C}^*$ we may define a holomorphic $2$-form $\Omega$ on $X$ that agrees with the pullback of the intersection pairing $\langle \cdot | \cdot \rangle$ where $\Sigma(\xi,\hbar)$ is non-degenerate.  
In Appendix \ref{generic_non_singularity} we show that $\Sigma(\xi,\hbar)$ is non-degenerate on an open dense subset of $X \times \mathbb{C}^*$.  

\begin{lemma}[Characterisation of isomonodromic flows I]\label{flows_in_kernel}
 Let $L(\hbar)$ be the distribution generating the isomonodromic flows,  then $L(\hbar) \subseteq \operatorname{ker} \Omega$.
\end{lemma}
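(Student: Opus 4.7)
The plan is to use equation (\ref{schlesinger_tautological}) together with the residue formula (\ref{explicit}) and the residue theorem on the compact Riemann surface $\Sigma(\xi,\hbar)$.  Given $U \in L(\hbar)$, Lemma~\ref{ansatz} produces $A_U(x) = \sum_{I=1}^{n} f_I/(x - q_I)$ such that (\ref{schlesinger_tautological}) rearranges to
$$U(\Psi) = d(A_U \cdot y) - \kappa(A_U).$$
Since $A_U \cdot y$ is globally meromorphic on $\Sigma(\xi,\hbar)$, the cohomology classes agree: $[U(\Psi)] = [-\kappa(A_U)]$ in $H^1(\Sigma(\xi,\hbar), \mathbb{C})$.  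Taking the local antiderivative $d^{-1}U(\Psi) = A_U \cdot y - d^{-1}\kappa(A_U)$ in the residue formula (\ref{explicit}) splits $\Omega(U,V)$ as
$$\Omega(U,V) = 2\pi i \sum_{p} \operatorname{Res}_p(A_U \cdot y \cdot V(\Psi)) \; - \; 2\pi i \sum_{p} \operatorname{Res}_p(d^{-1}\kappa(A_U) \cdot V(\Psi)),$$
the sum running over preimages in $\Sigma(\xi,\hbar)$ of $\{w_\alpha\} \cup \{q_I\}$.

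The first sum vanishes by the residue theorem: $A_U \cdot y \cdot V(\Psi) = A_U y V(y)\, dx$ is a globally defined meromorphic 1-form on the compact Riemann surface $\Sigma(\xi,\hbar)$.  At branch points coming from zeros of $Q$ it is in fact holomorphic (from $V(y) = V(Q)/(2y)$ combined with $dx = 2s\,ds$ in the local coordinate), so all of its poles lie among the listed preimages, and the total residue sum is zero.

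For the second sum I would verify vanishing pole by pole.  At a preimage of $w_\alpha$ with local coordinate $s$, the holomorphicity of $A_U$ at $w_\alpha$ (Lemma~\ref{ansatz}) together with $1/y = O(s^{2m_\alpha - 1})$ gives $\kappa(A_U) = O(s^{2m_\alpha})\, ds$ and hence $d^{-1}\kappa(A_U) = O(s^{2m_\alpha + 1})$; a parallel order count gives $V(\Psi) = O(s^{-2m_\alpha})\, ds$, so $d^{-1}\kappa(A_U) \cdot V(\Psi) = O(s)\, ds$ is holomorphic and contributes no residue.  At a preimage of $q_I$ with local coordinate $s = x - q_I$, the apparent singularity condition (\ref{apparent}) determines enough of the Laurent expansion of $y$ (in particular it forces the $s^3$ coefficient of $1/y$ to vanish) to make the residue of $\kappa(A_U)$ itself vanish — so $[\kappa(A_U)]\in H^1$ is well defined — and to produce the algebraic cancellations needed for the residue of $d^{-1}\kappa(A_U)\cdot V(\Psi)$ to vanish.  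Because the hyperelliptic involution $y\mapsto -y$ sends both factors to their negatives, the preimages $q_I^+$ and $q_I^-$ contribute equal residues, so it is enough to check the cancellation at one of them.

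The technical core is this local calculation at $q_I^{\pm}$: both factors have genuine poles, and the required cancellation relies on combining the Laurent expansion of $y$ imposed by (\ref{apparent}), the polar structure of $A_U$ dictated by Lemma~\ref{ansatz}, and the way $V(y)$ depends on the fibre coordinates $q_I, v_I$ of $X$.  Beyond this algebraic step the argument is essentially formal, using only the residue theorem and order-of-vanishing estimates.
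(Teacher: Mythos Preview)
Your approach is correct and lands on the same essential computation as the paper, but the organisation differs in a way worth noting.  The paper first observes that non-degeneracy of the intersection pairing gives $\ker\Omega = \mu^{-1}\big((\im\mu)^{\perp}\big)$, so it suffices to show $\langle \mu(W)\,|\,[\kappa_I]\rangle = 0$ for arbitrary $W\in T_\xi X$ and each $\kappa_I$ coming from $A=(x-q_I)^{-1}$.  It then applies the residue formula with the antiderivative on $W(\Psi)$, i.e.\ it studies $\rho = d^{-1}(W(\Psi))\,\kappa_I$ pole by pole.  You instead keep $U\in L(\hbar)$ on the left, split $d^{-1}U(\Psi) = A_U y - d^{-1}\kappa(A_U)$, and dispose of the $A_U y$ piece by a single global application of the residue theorem to the meromorphic $1$-form $A_U\, y\, V(y)\,dx$.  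This is a clean extra step not in the paper; the remaining piece $\sum\operatorname{Res}\big(d^{-1}\kappa(A_U)\cdot V(\Psi)\big)$ is the paper's orthogonality statement with the roles of the two factors in the residue formula swapped (integration by parts).

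Both routes therefore reduce to the same pole-by-pole verification.  Your order counts at $w_\alpha$ and $\infty$ match the paper's.  At the apparent singularities you correctly identify the apparent-singularity expansion of $y$ as the mechanism forcing the residue of $\kappa_I$ to vanish and the cancellation in $d^{-1}\kappa(A_U)\cdot V(\Psi)$.  Two points to tighten: first, you should explicitly restrict to the open dense set where $\Sigma(\xi,\hbar)$ is smooth (as the paper does), since your use of the residue theorem on a compact Riemann surface requires it, and then close up by continuity.  Second, by linearity one may take $A_U=(x-q_I)^{-1}$; then the cases $x=q_I$ (where $\kappa_I$ has the cubic pole) and $x=q_J$ with $J\neq I$ (where $\kappa_I$ has a simple zero but $V(\Psi)$ can acquire a double pole via $\partial/\partial q_J$) behave differently and should be treated separately, as the paper does.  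Beyond these, your plan would succeed and the ``technical core'' you flag is exactly the computation the paper carries out explicitly.
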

\begin{proof} The condition $L(\hbar) \subseteq \operatorname{ker} \Omega$ is a closed one so it suffices to show this for $(\xi,\hbar)$ for which the curve $\Sigma(\xi,\hbar)$ is non-singular. 
For such a  $(\xi,\hbar)$ the intersection pairing $\langle \cdot | \cdot \rangle$ is of course non-degenerate.  This implies $\operatorname{ker} \Omega =\mu(\xi,\hbar)^{-1}{( (\im \mu(\xi,\hbar)})^{\perp})$.  We know from (\ref{cohomology_condition}) that the image of the generator of an isomonodromic flow $V$ is the cohomology class $-[\kappa(A)]$ for some $A$ as in Lemma \ref{ansatz}.  We must therefore show that $[\kappa(A)]$ is orthogonal to the image of $\mu(\xi,\hbar)$.  By linearity,  we may check this separately for each $\kappa_I \coloneqq \kappa(A)$ where $A = (x-q_I)^{-1}$,  for $I = 1,...,n$.  \par 
From the apparent singularity condition (\ref{apparent}) at $x = q_J$ it follows
\begin{align}\label{y_expand}
y = \frac{\sqrt{3}}{2(x-q_J)} + \frac{u_J}{\sqrt{3}} + \frac{2u_J^2}{3\sqrt{3}}(x-q_J) + \frac{9P_J(q_J)-4u_J^3}{9\sqrt{3}}(x-q_J)^2 + ...  , 
\end{align}
where $u_J = p_J/\hbar + v_J$ and $P_J(q_J)$ is the coefficient of $(x-q_J)$ in the expansion of $Q(x)$,  the formula for which will not be important.  \par 
It then follows 
\begin{align}
\kappa_I = \Bigg( -\frac{\sqrt{3}}{(x-q_I)^3} + \frac{2u_I}{\sqrt{3}(x-q_I)^2} + \frac{18P_I(q_I)-16u_I^3}{9\sqrt{3}} + ...  \Bigg) dx,  
\end{align}
while $\kappa_I$ has a simple zero at $x = q_J$ for $J \ne I$.  \par 
We will now,   for arbitrary $U \in T_{\xi}X$,  show that the $1$-form $
\rho_I \coloneqq d^{-1} U(\Psi) \kappa_I 
$  has no residues and hence that $\langle U(\Psi), \kappa_I \rangle = 0$ as required.  \par 
We use (\ref{y_expand}) to calculate,  for $U \in T_{\xi}X$ with no $\frac{\partial}{\partial q_J}$ terms for  $J = 1,...,n$,  
\begin{align}
d^{-1} \big(U(\Psi)\big) &= \frac{U(u_J)}{\sqrt{3}}(x-q_J) + \frac{2u_JU(u_J)}{3\sqrt{3}}(x-q_J)^2 + O((x-q_J)^3).   \label{magic_expansion_1}
\end{align}
While,  using that $\frac{\partial u_K}{\partial q_J}$ vanishes for $K \ne J$ we have the expansions

\begin{align}
d^{-1} \bigg(\frac{\partial}{\partial q_J}(\Psi)\bigg)&=  O((x-q_K)^3),  \ K \ne J, \label{magic_expansion_2}\\ 
d^{-1} \bigg(\frac{\partial}{\partial q_J}(\Psi)\bigg)&=  -\frac{\sqrt{3}}{2(x-q_J)} + \Big(\frac{1}{\sqrt{3}}\frac{\partial u_J}{\partial q_J} - \frac{2u_J^2}{3\sqrt{3}}\Big)(x-q_J) + \frac{2u_J}{3\sqrt{3}}\frac{\partial u_J}{\partial q_J}(x-q_J)^2 \nonumber \\ & \ \ \ \ - \frac{(9P_J(q_J) - 4u_J^3)}{9\sqrt{3}}(x-q_J)^2 + O((x-q_J)^3).  \label{magic_expansion_3}
\end{align}
The above expansions imply for arbitrary $U \in T_{\xi}X$  that the $1$-form $\rho_I$
has no residues at the pair of poles corresponding to $x = q_I$.  \par Lastly,  we argue $\rho_I$ has no other poles.  Since $k_I$ has a simple zero at $x = q_J$ for $J \ne I$,  we see from the above formulae that these are not poles of $\rho_I$.  That the poles of $\phi $ lying in $\mathbb{C}$ are not poles of $\rho_I$ follows because the function $d^{-1}(U(\Psi))$ has a pole of order at most $2m_\alpha - 1$ at such a point while $\kappa_I$ has a zero of order $2m_\alpha$.   At $x = \infty$,  $d^{-1}(U(\Psi))$ has a pole of order at most $2m_\infty - 7$ while $\kappa_I$ has a zero of order $2m_\infty$.  \end{proof}

\section{The twistor distribution of isomonodromic flows}\label{flows_calculation}
In the previous section we obtained a necessary condition for a flow to be isomonodromic: that its generator lies in the kernel of $\Omega$.  Here we will show the existence of $2n$ linearly independent isomonodromic flows by proceeding directly from (\ref{fuchs}).  \par
A subset of the isomonodromic flows will be ``trivially" isomonodromic in the sense that they satisfy (\ref{fuchs}) with $A = 0$.  It is convenient to introduce some terminology:
\begin{defn}[Isopotential flows]\label{isopotential_defn}
An isopotential flow is a flow generated by a vector field $L \in \Gamma(TX)$ such that
\begin{align}
L(Q(x)) = 0.  
\end{align}
\end{defn}  
\begin{prop}\label{isopotential_flows_prop}
There are $n$ linearly independent isopotential flows.  They are 
\begin{align}\label{isopotential_flows}
L_{i}^{(\alpha)} \coloneqq \frac{\partial}{\partial a_i^{(\alpha)}} - \frac{1}{\hbar}\sum_{I=1}^{n} \frac{\partial p_I}{\partial a_i^{(\alpha)}} \frac{\partial}{\partial v_I}, 
\end{align}
where $1 \le i \le m_\alpha$ and $\alpha = 1,...,N$ and  $0 \le i \le m_\infty - 4$ for $\alpha = \infty$.
\end{prop}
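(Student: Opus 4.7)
The plan is a direct computation: expand $L_i^{(\alpha)}(Q)$ as a Laurent polynomial in $\hbar$ and check when each coefficient vanishes, then count the pairs $(i,\alpha)$ for which it does. Writing $\partial_a$ for $\partial_{a_i^{(\alpha)}}$: since $L_i^{(\alpha)}$ has only $\partial_a$ and $\partial_{v_I}$ components, while $Q_2$ depends only on $w_\alpha, q_I, v_I$ and has no explicit $a$-dependence, the $\hbar^0$ contribution to $L_i^{(\alpha)}(Q)$ vanishes automatically. What remains are the $\hbar^{-1}$ coefficient $\partial_a Q_1 - \sum_J (\partial_a p_J)\,\partial_{v_J}Q_2$ and the $\hbar^{-2}$ coefficient $\partial_a Q_0 - \sum_J (\partial_a p_J)\,\partial_{v_J}Q_1$.

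For the $\hbar^{-1}$ coefficient, I would use that $Q_1$ depends on the $a_i^{(\alpha)}$ only through the constraint $p_J^2=Q_0(q_J)$, so that $\partial_a Q_1 = \sum_J(\partial_a p_J)\,\partial_{p_J}Q_1$ with the $p_J$ treated as independent variables. The simple-pole parts $(x-q_J)^{-1}$ of $\partial_{p_J}Q_1$ and $\partial_{v_J}Q_2$ then cancel, reducing the coefficient to $\sum_J(\partial_a p_J)(\partial_{p_J}R - \partial_{v_J}S)$. A direct computation from (\ref{Rfrac}) and the explicit formula for $S(x)$ shows that both $\partial_{p_J}R$ and $\partial_{v_J}S$ equal $2 v_J F(q_J)L_J(x)/F(x) + \sum_{I\ne J}F(q_I)L_I(x)/(F(x)(q_J-q_I))$, where $L_J(x) = \prod_{K\ne J}(x-q_K)/(q_J-q_K)$ is the Lagrange basis polynomial at the nodes $q_J$; this establishes the symmetry $\partial_{p_J}R=\partial_{v_J}S$ and the vanishing of the $\hbar^{-1}$ coefficient for every $(i,\alpha)$.

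For the $\hbar^{-2}$ coefficient, I would combine $\partial_{v_J}Q_1 = 2p_JF(q_J)L_J(x)/F(x)$ with $2p_J\,\partial_a p_J = \partial_a Q_0(q_J)$ to rewrite the coefficient, after multiplication by $F(x)$, as $F(x)\,\partial_a Q_0(x) - \sum_J \partial_a Q_0(q_J)F(q_J)L_J(x)$. Since the second term is the unique polynomial of degree at most $n-1$ interpolating the values $\partial_a Q_0(q_J)F(q_J)$ at the $q_J$, the coefficient vanishes precisely when $F(x)\,\partial_a Q_0(x)$ is itself a polynomial of degree at most $n-1$. An explicit count then gives exactly $n$ valid pairs: for $\alpha=\infty$ one has $\partial_a Q_0 = x^i$ and needs $i\le m_\infty-4$, contributing $m_\infty-3$ values; for $\alpha\ne\infty$ one has $\partial_a Q_0 = (x-w_\alpha)^{-i}$ and needs $i\le m_\alpha$, contributing $m_\alpha$ values, summing to $m_\infty-3+\sum_\alpha m_\alpha = n$. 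These $n$ flows are linearly independent in $TX$ since their leading $\partial/\partial a_i^{(\alpha)}$ components lie in distinct coordinate directions.

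The main obstacle is identifying and verifying the symmetric identity $\partial_{p_J}R = \partial_{v_J}S$, which is the key cancellation making the $\hbar^{-1}$ contribution vanish; once this is in hand, the rest of the argument is a Lagrange-interpolation recognition of the $\hbar^{-2}$ cancellation followed by a straightforward degree count.
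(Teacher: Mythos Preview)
Your proof is correct and establishes the proposition as stated, but it takes a genuinely different route from the paper's argument.

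The paper argues indirectly via the apparent singularity condition: any isopotential flow must fix the pole locations $w_\alpha, q_I$ and (by comparing pole orders of $Q_0$ and $Q_1$) the high-index Laurent coefficients, so it is determined by the remaining $2n$ parameters. The condition $L(Q)=0$ then forces $L(u_I)=0$ for each $I$, which gives $n$ independent constraints, so the space of isopotential flows has dimension at most $n$. Conversely, if $L(u_I)=0$ for all $I$ then $L(Q(x))$ is a rational function with at most $n-1$ poles (counted with multiplicity) but $n$ prescribed zeros at the $q_I$, hence vanishes identically. The explicit flows $L_i^{(\alpha)}$ are then exhibited as satisfying $L(u_I)=0$.

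Your approach instead expands $L_i^{(\alpha)}(Q)$ in powers of $\hbar$ and verifies each coefficient vanishes by explicit computation. The key step is recognising the identity $\partial_{p_J}R=\partial_{v_J}S$ (which is correct and follows from the parallel structure of the Lagrange interpolants defining $R$ and $S$), together with the Lagrange-interpolation characterisation of the $\hbar^{-2}$ cancellation. This is more hands-on but entirely self-contained, and it makes transparent exactly which $(i,\alpha)$ pairs work.

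One point worth noting: the paper's argument simultaneously yields that there are \emph{at most} $n$ isopotential flows (a general isopotential flow, possibly involving $\partial/\partial w_\alpha$ or $\partial/\partial q_I$, is forced into this $n$-dimensional space). Your count shows exactly $n$ of the candidate vector fields $L_i^{(\alpha)}$ are isopotential, but does not by itself rule out other isopotential flows outside this ansatz. The proposition as literally stated only asserts existence, so your proof suffices; but the upper bound is used implicitly in the paper's later dimension counts (e.g.\ in Proposition~\ref{explicit_flows_calc_prop}), so if you were writing this up independently you would want to add that short argument.
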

\begin{proof}
  The locations $w_\alpha,  q_I$ of the poles must be fixed for $\alpha = 1,...,N$ and $I = 1,...,n$ in order for $Q(x)$ to be preserved by a flow.   By comparing the pole orders of $Q_0(x)$ and $Q_1(x)$,  we see $a_i^{(\alpha)}$ must be fixed for $i > m_\alpha$ for $\alpha = 1,...,N$ and $a_i^{(\infty)}$ for $i > m_\infty - 4$.  
Using the apparent singularity condition (\ref{apparent}) an isopotential flow will satisfy 
\begin{align}\label{isopotential_condition}
L(u_I) = 0
\end{align}
for $I = 1,...,n$.  This implies there are at most $n$ isopotential flows since the functions $u_I$ give $n$ independent functions of the $2n$ parameters that are not necessarily held fixed.  Note from the apparent singularity condition that $u_I$ being annihilated by $L$ implies $L(Q(x))$ has $n$ roots at the points $q_I$.  Now $L(Q(x))$ will have,  as a rational function,  a sum of pole orders at most
\begin{align}
m_\infty - 4 + \sum_{\alpha=1}^{N} m_\alpha = n - 1,  
\end{align}
so can have at most $n-1$ roots or is zero.  (\ref{isopotential_condition}) is therefore necessary and sufficient for $L(Q(x)) = 0$.  To see existence of flows satisfying (\ref{isopotential_condition}) note that the $n$ linearly independent vector fields (\ref{isopotential_flows})
satisfy (\ref{isopotential_condition}) where $1 \le i \le m_\alpha$ and $\alpha = 1,...,N$ and  $0 \le i \le m_\infty - 4$ for $\alpha = \infty$.  \end{proof} 

\begin{prop}\label{explicit_flows_calc_prop}
There are $2n$ linearly independent isomonodromic flows on $X$ defining a twistor distribution in the sense of Definition \textup{ \ref{twistor_distribution_def}}.  They are given by isopotential flows $L_{i}^{(\alpha)}$,  for $1 \le i \le m_\alpha$ where $\alpha = 1,...,N$ and $0 \le i \le m_\infty - 4$ for $\alpha = \infty$,  
together with,  for $I = 1,...,n$
\begin{align}
L_I = U_I + \frac{V_I}{\hbar},
\end{align}
with $U_I,V_I$ given in Appendix \ref{explicit_flows}.  

\end{prop}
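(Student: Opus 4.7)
The plan is to construct the $n$ additional flows $L_I$ by solving the isomonodromy equation (\ref{fuchs}) with the ansatz from Lemma \ref{ansatz}, and then to combine them with the $n$ isopotential flows of Proposition \ref{isopotential_flows_prop} to produce the required twistor distribution.

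For each $I = 1, \ldots, n$, I would choose $A(x) = (x - q_I)^{-1}$, which is permitted by Lemma \ref{ansatz}, and substitute into (\ref{fuchs}). The left-hand side $-2\,\partial Q/\partial t$ expands via the chain rule into a linear combination of the unknown flow derivatives of the coordinates $\{a_i^{(\alpha)}, w_\alpha, q_J, v_J\}$. Matching Laurent coefficients at each pole of $Q$ would yield a finite linear system: at $x = q_J$ for $J \ne I$ the right-hand side has a simple zero, forcing $\partial q_J/\partial t = 0$ and determining $\partial v_J/\partial t$ through the subleading apparent-singularity expansion (\ref{apparent}); at $x = q_I$ the $(x-q_I)^{-4}$ and lower singular terms on the right fix $\partial q_I/\partial t$ and $\partial v_I/\partial t$; and at $x = w_\alpha$ and $x = \infty$ the principal parts determine the flow components of the Laurent coefficients $a_i^{(\alpha)}$ together with the components along $\partial/\partial v_J$ coming from the reparameterisation of $R(x)$ in (\ref{Rfrac}).

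To read off the twistor structure $L_I = U_I + V_I/\hbar$, I would decompose (\ref{fuchs}) by powers of $\hbar$, using $Q = Q_0/\hbar^2 + Q_1/\hbar + Q_2$ and noting that the chosen $A$ carries no explicit $\hbar$ dependence. The $\hbar^{-2}$ contribution involves only $Q_0$ and $A$ and should fix the components of $U_I$ along $\partial/\partial a_i^{(\alpha)}$, $\partial/\partial w_\alpha$ and $\partial/\partial q_I$; the $\hbar^{-1}$ contribution involves $Q_1$ and determines $V_I$, including the flow of $v_I$ and the $1/\hbar$ corrections to the motion of $q_I$; the $\hbar^{0}$ balance must then be an automatic consequence of the apparent-singularity condition. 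Verifying that no positive power of $\hbar$ appears, and that no inverse powers beyond $\hbar^{-1}$ are required, reduces to a bookkeeping check using only the pole orders of $Q_0, Q_1, Q_2$ and of $A$.

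Finally, linear independence of the combined set $\{L_{i}^{(\alpha)}\} \cup \{L_I\}$ and the twistor property should follow by inspection: the isopotential flows contribute independent $\partial/\partial a_i^{(\alpha)}$ leading terms and fix each $q_I$, while each $L_I$ has leading part $\partial/\partial q_I$ inside $U_I$ and leading part proportional to $\partial/\partial v_I$ inside $V_I$. Combined with Lemma \ref{flows_in_kernel}, which guarantees any such isomonodromic flow lies in $\ker \Omega$, and the dimension count $\dim X = 4n$, this gives a twistor distribution in the sense of Definition \ref{twistor_distribution_def}. The main obstacle will be executing all the Laurent matching simultaneously in a consistent way; in particular, confirming that the $\partial/\partial v_J$ components extracted from the expansion at $x = q_J$ agree with those extracted from the principal parts at $w_\alpha$ and $\infty$ is the delicate step, and is presumably what necessitates the explicit but cumbersome formulas deferred to Appendix \ref{explicit_flows}.
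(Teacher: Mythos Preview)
Your overall strategy---take $A=(x-q_I)^{-1}$, match Laurent coefficients of (\ref{fuchs}) at each pole of $Q$, and then read off the $\hbar$-dependence---is the same as the paper's. However, two of your concrete claims are wrong and would derail the argument as written.

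First, your assertion that at $x=q_J$ with $J\ne I$ ``the right-hand side has a simple zero, forcing $\partial q_J/\partial t = 0$'' is false. With $A=(x-q_I)^{-1}$ the function $A$ is regular at $q_J$, but $Q$ has a double pole there, so $4Q\,\partial_x A$ and $2(\partial_x Q)A$ are singular, and the right-hand side of (\ref{fuchs}) has a genuine triple pole at $q_J$. Matching the $(x-q_J)^{-3}$ term actually gives $\dot q_J = -(q_J-q_I)^{-1}$: every apparent singularity moves, not just $q_I$. Consequently the downstream structure of $U_I$ and $V_I$ is richer than you anticipate---both contain $\partial/\partial q_K$ terms for all $K$, and $U_I$ also contains $\partial/\partial a_i^{(\alpha)}$ components (for $i>m_\alpha$) coming from the Laurent matching at $w_\alpha$ and $\infty$.

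Second, precisely because of this, your linear-independence sketch does not work. It is not true that $U_I$ has ``leading part $\partial/\partial q_I$'' and $V_I$ has ``leading part proportional to $\partial/\partial v_I$'' in any sense that makes the independence of $\{U_I,V_I,U_i^{(\alpha)},V_i^{(\alpha)}\}$ obvious by inspection. The paper reduces this to invertibility of an $n\times n$ matrix with entries $\partial p_I/\partial a_i^{(\alpha)}$, which is a generalised Vandermonde matrix in the $q_I$ and $w_\alpha$; showing its determinant is nonzero on all of $X$ requires a separate combinatorial argument counting linear factors. Your proposal would need to supply something of this kind rather than appeal to Lemma~\ref{flows_in_kernel} and a dimension count, which only bounds the rank from above.
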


\begin{proof}
We will find the flows which solve (\ref{fuchs}) with $A = (x-q_I)^{-1}$ for each $I$.  By linearity,  and Lemma \ref{ansatz} this will give all the isomonodromic flows.  The calculation will involve the Laurent expansion of $Q(x)$ at $x = q_I$ to the next order.  Write
\begin{align}
Q'(x) = P_I(x) + W_I(x),  \quad I = 1,...,n,  
\end{align}
where $P_I(x)$ is holomorphic at $x = q_I$ and $W_I(x)$ is the principal part (singular terms) of the Laurent expansion of $Q'(x)$ at $q_I$ so that
\begin{align}
Q(x) = \frac{3}{4(x-q_I)^2} + \frac{u_I}{x-q_I} + u_I^2 + P_I(q_I)\cdot(x-q_I)+ O\big((x-q_I)^2\big).  
\end{align}
We now consider (\ref{fuchs}) with $A = (x-q_I)^{-1}$ and insist that the $(x-q_K)^{l}$ terms,  in the Laurent expansion of both sides of (\ref{fuchs}),  $l \le 0$,  agree for $K = 1,...,n$.  We need to handle the $K = I$ and $K \ne I$ terms separately.  For $K = I$ the conditions are
\begin{equation}
\begin{aligned}
(x-q_I)^{-3}:& \\
(x-q_I)^{-2}:& \\
(x-q_I)^{-1}:& \\
(x-q_I)^{0}\ \, :& 
\end{aligned}
\ \ \ \ \ 
\begin{aligned}
\dot{q}_I &= -2u_I \\
-2\dot{q}_I u_I &= 4u_I^2 \\ 
-2\dot{u}_I &= 2P_I(q_I) \\
-4u_I\dot{u}_I +  2\dot{q}_IP_I(q_I) &= 0.
\end{aligned}
\end{equation}
Note the $(x-q_I)^0$ and $(x-q_I)^{-2}$ equations are implied by the other two.   For $K \ne I$,  a more complicated computation gives
\begin{equation}
\begin{aligned}
(x-q_K)^{-3}:& \\
(x-q_K)^{-2}:& \\
(x-q_K)^{-1}:& \\
(x-q_K)^{0}\ \, :& 
\end{aligned}
\ \ \ \ \ 
\begin{aligned}
\dot{q_K} &= -(q_K-q_I)^{-1} \\
-2\dot{q}_K u_K &= 2u_K(q_K-q_I)^{-1} \\ 
-2\dot{u}_K &= 2u_K(q_K-q_I)^{-2} - 3(q_K-q_I)^{-3} \\
-4u_K\dot{u}_K +  2\dot{q_K}P_K(q_K) &= -\frac{2P_K(q_K)}{q_K-q_I} -\frac{6u_K}{(q_K-q_I)^3} + \frac{4u_K^2}{(q_K-q_I)^{2}}
\end{aligned}
\end{equation}
and similarly the $(x-q_K)^0$ and $(x-q_K)^{-2}$ equations are implied by the other two. \par 

The conditions give expressions for $\dot{q_K}$ and $\dot{u_K}$ for $K = 1,...,n$.
 Altogether 
 \begin{align}
 \dot{q_I} &= -2\Big(\frac{p_I}{\hbar} + v_I\Big), \\
  \dot{q_K} &= -(q_K-q_I)^{-1},  \quad K \ne I  
 \end{align}
 and 
 \begin{align}
 \dot{v_I} &= -P_I(q_I) - \frac{\dot{p_I}}{\hbar}, \label{flow_0} \ \\
 \dot{v_K} &= - \frac{\dot{p_K}}{\hbar} - \Big(\frac{p_K}{\hbar} + v_K\Big)(q_K-q_I)^{-2} + \frac{3}{2}(q_K-q_I)^{-3},  \quad K \ne I. \label{flow_1}
 \end{align}
For $K = 1,...,n$,  a simple application of the chain rule yields:
 \begin{align}
 \dot{p_K} &= \frac{1}{2p_K}\big(Q_0'(q_K)\dot{q_K} + \dot{Q_0}(q_K)\big),
 \end{align}
so that 
 \begin{align}\label{littleP_eqns1}
 \dot{p_I} &= -\frac{Q_0'(q_I)}{\hbar} + \frac{1}{2p_I}\big(\dot{Q_0}(q_I)-2Q_0'(q_I)v_I \big), \\
  \dot{p_K} &=  \frac{1}{2p_K}\Bigg(\dot{Q_0}(q_K) - \frac{Q_0'(q_K)}{q_K-q_I} \Bigg), \quad   K \ne I.\label{littleP_eqns2}
 \end{align}
Meanwhile,  in full detail:
 \begin{align}\label{BigP_def}
P_I(x) = \frac{Q_0'(x)}{\hbar^2} + \frac{R'(x) - \sum\limits_{J\ne I}^n\frac{p_J}{(x-q_J)^2}}{\hbar} - \sum_{J\ne I}^n\frac{3}{2(x-q_J)^3} - \sum_{J \ne I}^n\frac{v_J}{(x-q_J)^2} +S'(x).  
 \end{align}
Substituting (\ref{littleP_eqns1}) through (\ref{BigP_def}) into (\ref{flow_0}) and (\ref{flow_1}) we obtain expressions for $\dot{v_K}$ for $K = 1,...,n$ as functions of $\{a_i^{(\alpha)},  w_\alpha,  v_I,  q_{I}\}$ and $\{\dot{a_i}^{(\alpha)},\dot{w_\alpha}\}$.  \par We will now find expressions for $\{\dot{a_i}^{(\alpha)},\dot{w_\alpha}\}$ in terms of $\{a_i^{(\alpha)},w_\alpha,  v_I,  q_{I}\}$.  \par 
To solve for $\dot{w_\alpha}$ use the $(x-w_\alpha)^{-2m_\alpha}$ term in the Laurent expansion of (\ref{fuchs}):
\begin{align}
-2(2m_\alpha-1){a_{2m_\alpha-1}^{(\alpha)}\dot{w_\alpha}} = \frac{2(2m_\alpha-1)}{w_\alpha - q_I} {a_{2m_\alpha-1}^{(\alpha)}}
\end{align}
so 
\begin{align}
\dot{w_\alpha} = -\frac{1}{w_\alpha-q_I},  \quad \alpha = 1,...,N.  
\end{align}
Next,  the $(x-w_\alpha)^{-i}$ term of (\ref{fuchs}) yields,  after a calculation taking a product of Laurent expansions,  equations for $i > m_\alpha$

\begin{align}
{\dot{a_i}^{(\alpha)}} = {T^{(\alpha)}_i(q_I-w_\alpha)},   
\end{align}
where
\begin{align}
T_i^{(\alpha)}(x) = \sum_{k=i}^{2m_\alpha-1}(2i-k-2)\frac{a_k^{(\alpha)}}{x^{k-i+2}}.  
\end{align}
Similarly,  the terms in the Laurent expansion at infinity give,  for $i > m_\infty - 4$
 \begin{align}
 \dot{a_{i}}^{(\infty)} = T_{i}^{(\infty)}(q_I),  
 \end{align}
 where 
 \begin{align}
 T_{i}^{(\infty)}(x) = (2i-2m_\infty + 7)x^{2m_\infty-i-7} + \sum_{k=i+2}^{2m_\infty - 7}(2i-k+2)x^{k-i-2}a_k^{(\infty)}.  
 \end{align}
 We have now specified $\{\dot{a_i}^{(\alpha)}, \dot{w_\alpha}, \dot{v_I},  \dot{q_I}\}$ in terms of $\{a_i^{(\alpha)}, w_\alpha,  v_I,  q_{I}\}$ in such a way that (\ref{fuchs}) is satisfied up to a rational function with roots at the $n$ points $q_I$ but with poles the orders of which sum to at most
\begin{align}
  m_\infty - 4 + \sum_{\alpha = 1}^N  m_\alpha  = n - 1. 
 \end{align} 
   Thus,  this rational function is zero and (\ref{fuchs}) is satisfied.  We therefore see that (given the freedom to add isopotential flows) there is an $n+1$ dimensional space of flows satisfying (\ref{fuchs}) with $A = (x-q_I)^{-1}$ for each $I$.  We may of course add the unique isopotential flow so that $\dot{a_i}^{(\alpha)}=0$ where $1 \le i \le m_\alpha$ for $\alpha = 1,...,N$ and $0 \le i \le m_\infty - 4$ for $\alpha = \infty$.  We denote the generator of this distinguished flow,  with $A$ as above,  by $L_I$.  \par 
Note that,  substituting everything into (\ref{flow_0}) and (\ref{flow_1}),  the expressions for the time derivatives $\{\dot{a_i}^{(\alpha)}, \dot{w_\alpha}, \dot{v_I},  \dot{q_I}\}$ in terms of $\{a_i^{(\alpha)}, w_\alpha,  v_I,  q_{I}\}$ contain only $\hbar^0$ and $\hbar^{-1}$ terms.  In particular,  we may write
\begin{align}
L_I = U_I + \frac{V_I}{\hbar}.  
\end{align}
Note that the flows are defined where the $q_I$ are distinct and away from the branch points $\mathcal{B}(p)$ so defined on all of $X$.  \par The full set of isomonodromic flows $\{L_i^{(\alpha)}\} \cup \{L_I\}$ are given explicitly in Appendix \ref{explicit_flows}.
It remains to show that the set of $4n$ vector fields
 $$\mathcal{S} = \{U_I,V_I\} \cup \{U_i^{(\alpha)},V_i^{(\alpha)}\} $$ trivialise $TX$ and so define a twistor distribution in the sense of Definition \ref{twistor_distribution_def}.  
 This is essentially a combinatorial argument and so deferred to Appendix \ref{linear_independence_appendix}.  
 \end{proof}
 
\begin{rem}
Not only are the $2n$ vector fields $\{U_{i}^{(\alpha)}\} \cup \{U_I\}$ linearly independent but they span a distribution transverse to $M$.  
\end{rem}

\section{$\Omega$ as an $\mathcal{O}(2)$-valued twistor $2$-form}\label{Omega_is_O2}
The $1$-form $\Psi$ has poles only at $x \in \{\infty, w_1,...,w_N\}$,  corresponding to poles of $\phi$,  and the pairs of poles corresponding to $x = q_I$ for $I = 1,...,n$.  Accordingly,  $\Omega$ may be evaluated from a sum of residues at these points.  
Near the poles of $\phi$ we may expand
\begin{align}
y &= \frac{y_0}{\hbar}\sqrt{1 + \hbar\frac{Q_1(x)}{Q_0(x)} + \hbar^2\frac{Q_2(x)}{Q_0(x)}} \nonumber \\ 
&= \frac{y_0}{\hbar} + \frac{Q_1(x)}{2y_0} + \hbar\frac{4Q_0(x)Q_2(x)-Q_1(x)^2}{8y_0^3} + O(\hbar^2). 
\end{align}
Here we have used the fact the argument of the square root is holomorphic in a neighbourhood of $\hbar = 0$.  Identifying sufficiently small neighbourhoods of the poles of $\phi$ in $\Sigma_0(p)$ and $\Sigma(\xi,\hbar)$,  we may write,  restricting to a small neighbourhood of such a pole (which will be a branch point for both curves), 
\begin{align}\label{hbar_expansion}
\Psi = \frac{\psi}{\hbar} + \sigma + \hbar\tau + O(\hbar^2),  
\end{align}
where we define meromorphic $1$-forms on $\Sigma_0(p)$
\begin{align}
\sigma &\coloneqq \frac{Q_1(x)}{2y_0} dx,  
\\ 
\tau &\coloneqq \frac{4Q_0(x)Q_2(x)-Q_1(x)^2}{8y_0^3}dx. 
\end{align}
The $1$-form $\sigma$ (previously introduced in \S \ref{potential_setup}) has constant residues at simple poles at the pairs of points corresponding to $q_I$ for $I = 1,...,2n$ and no other poles.  In particular,   $U(\sigma) := U(Q_1(x)/2y_0) dx$ represents a class in $H^1(\Sigma_0(p),\mathbb{C})$ for $U \in T_\xi X$.  
 \begin{lemma}\label{hbar3_does_not_contribute}
 Terms of order $\hbar^2$ and higher in the $\hbar$ expansion \textup{(\ref{hbar_expansion})} have zeroes of order at least $2m_\alpha-2$ at $x = w_\alpha$ and $x = \infty$ as $1$-forms while $\sigma,\tau$ are holomorphic in a neighbourhood of these points.  
 \end{lemma}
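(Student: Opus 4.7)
The plan is to make the binomial expansion of $(1 + u)^{1/2}$ explicit, with $u := \hbar Q_1/Q_0 + \hbar^2 Q_2/Q_0$, and reduce the problem to a single order estimate for each monomial appearing in the expansion. The series $(1+u)^{1/2} = \sum_{k \geq 0} \binom{1/2}{k} u^k$ shows that the coefficient of $\hbar^n$ in $y = (y_0/\hbar)(1 + u)^{1/2}$ is a finite $\mathbb{C}$-linear combination of the monomials
\begin{equation*}
T_{j,k} := \frac{Q_1^j Q_2^k}{y_0^{2(j+k) - 1}}, \qquad j + 2k = n+1, \quad j, k \geq 0.
\end{equation*}
In particular $\sigma = T_{1,0}\,dx$ (with $n=0$) and $\tau$ is an explicit combination of $T_{0,1}\,dx$ and $T_{2,0}\,dx$ (with $n=1$); every $n \geq 2$ forces $j + k = n+1-k \geq (n+1)/2 \geq 2$. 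It therefore suffices to bound the order of $T_{j,k}\,dx$ at each branch point.

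At a finite branch point $w_\alpha$, use the uniformiser $s_\alpha$ with $s_\alpha^2 = x - w_\alpha$, so $dx = 2 s_\alpha\, ds_\alpha$. Inspection of the formulae in Section \ref{potential_setup} shows that both $Q_1(x)$ and $Q_2(x)$ have poles of order at most $m_\alpha$ in $x$ at $w_\alpha$: for $Q_1$ because the $p_I/(x - q_I)$ part is holomorphic there and $R(x)$ is a polynomial divided by $F(x)$, whose factor $(x - w_\alpha)^{m_\alpha}$ controls the pole; for $Q_2$ by the analogous observation for $S(x)$ together with the holomorphy of the $q_I$-singular terms. In the coordinate $s_\alpha$ this gives pole order at most $2m_\alpha$ for $Q_1$ and $Q_2$, while $y_0$ has pole of order exactly $2m_\alpha - 1$, so a direct count gives
\begin{equation*}
\operatorname{ord}_{s_\alpha}\!\bigl(T_{j,k}\, dx\bigr) \;\geq\; -2m_\alpha(j+k) + (2m_\alpha - 1)\bigl(2(j+k) - 1\bigr) + 1 \;=\; (j+k-1)(2m_\alpha - 2).
\end{equation*}
This settles the lemma at $w_\alpha$: when $j + k = 1$ the order is $\geq 0$, so $\sigma$ and $\tau$ are both holomorphic (the $T_{2,0}$ piece of $\tau$ already vanishes to order $2m_\alpha - 2$); and when $j + k \geq 2$, as forced by $n \geq 2$, every contributing monomial vanishes to order at least $2m_\alpha - 2$.

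The analysis at $x = \infty$ is formally identical. Taking $s_\infty$ with $s_\infty^{-2} = x$ gives $dx = -2 s_\infty^{-3}\, ds_\infty$ and $v_{s_\infty}(y_0) = -(2m_\infty - 5)$; the degree count $\deg(\text{numerators of } R, S) \leq n - 1$ against $\deg F = n - m_\infty + 3$ forces $R(x), S(x) = O(x^{m_\infty - 4})$ at infinity, so $v_{s_\infty}(Q_1), v_{s_\infty}(Q_2) \geq -(2m_\infty - 8)$. Rerunning the arithmetic gives $\operatorname{ord}_{s_\infty}(T_{j,k}\, dx) \geq (j+k-1)(2m_\infty - 2)$, and the identical case split completes the proof. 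The only substantive step is tracking the pole-order bounds for $R$ and $S$ simultaneously at every branch point; once those are secured, both cases collapse to the same uniform estimate.
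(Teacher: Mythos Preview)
Your proof is correct and follows essentially the same route as the paper: both expand $\sqrt{1+u}$ binomially with $u=\hbar Q_1/Q_0+\hbar^2 Q_2/Q_0$ and reduce to a pole-order count in the local uniformisers $s_\alpha$, $s_\infty$. Your version is more explicit, isolating the monomials $T_{j,k}$ and producing the sharp bound $\operatorname{ord}(T_{j,k}\,dx)\geq (j+k-1)(2m_\alpha-2)$, whereas the paper phrases the same estimate in terms of the zero order of $Q_1/Q_0$, $Q_2/Q_0$ against the pole of $\psi=y_0\,dx$; the content is identical. (Minor remark: $\sigma=\tfrac12 T_{1,0}\,dx$ rather than $T_{1,0}\,dx$, but the constant is irrelevant to the order argument.)
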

\begin{proof}
Consider the expansion
\begin{align}
\sqrt{1+t} = 1+\frac{t}{2}-\frac{t^2}{8}+\frac{t^3}{16} + O(t^4)
\end{align}
and substitute $t = \hbar{Q_1(x)}/{Q_0(x)} + \hbar^2{Q_2(x)}/{Q_0(x)}$. 
We see immediately that terms of order $\hbar^k$ in the resulting expansion are polynomials in ${Q_1(x)}/{Q_0(x)}$ and ${Q_2(x)}/{Q_0(x)}$ of degree $k$ with no terms of degree less than $k/2$.  Using a parameter $s_\alpha$ such that $s_\alpha^2 = (x-w_\alpha)$ in a neighbourhood of $w_\alpha$ for $\alpha = 1,...,N$, we see that the functions $Q_1(x)/Q_0(x)$ and ${Q_2(x)}/{Q_0(x)}$ each have zeroes of degree $2m_\alpha - 2$.  Meanwhile $y_0 dx$ has a pole of order $2m_\alpha-2$ at $w_\alpha$ so that the terms in (\ref{hbar_expansion}) of order $\hbar^2$ or greater have zeroes of order at least $2m_\alpha-2$.  A similar argument works at $x = \infty$.  
\end{proof}
We will see that the upshot of Lemma \ref{hbar3_does_not_contribute} is that we can evaluate the contribution to $\Omega$ from residues at the poles of $\phi$ purely from the $1$-forms $\psi,\sigma,\tau$ which are defined on $\Sigma_0(p)$.  The same argument for the validity of the local expansion (\ref{hbar_expansion}) does not work in the neighbourhood of a point corresponding to $x = q_I$ but the required terms in the Laurent expansion to evaluate the contribution of a residue here to $\Omega$ are (see also (\ref{y_expand}))
\begin{align}\label{q_I contribution to Omega}
y = \frac{\sqrt{3}}{2(x-q_I)} +  \hbar^{-1}\frac{p_I}{\sqrt{3}}+ \frac{v_I}{\sqrt{3}} + O(x-q_I).  
\end{align}
\begin{prop}\label{Omega_is_O(2)_valued}
We may write 
\begin{align}
\Omega = \frac{\Omega_{-}}{\hbar^2} + \frac{i\Omega_{I}}{\hbar} + \Omega_{+},  
\end{align}
where $\Omega_{-},\Omega_{I},\Omega_{+}$ are closed $2$-forms on $X$ that do not depend on $\hbar$.  \par Specifically,  for $U,V \in T_\xi X$ 
\begin{align}
\Omega_{-}(U,V) &= 2\pi i \sum_{\alpha} \underset{x=w_\alpha}{\mathrm{Res}} \big( d^{-1} U(\psi) V(\psi) \big) = \omega(U,V), \label{Omega0_formula} \\
i\Omega_{I}(U,V) &= 2\pi i \sum_{I=1}^n dp_I \wedge dq_I  + 2\pi i \sum_{\alpha} \underset{x=w_\alpha}{\mathrm{Res}} \big( d^{-1} U(\psi)  V(\sigma) + d^{-1} U(\sigma) V(\psi)  \big) \label{OmegaI_formula} \\
&= {\langle U(\psi) | V(\sigma) \rangle_0 + \langle U(\sigma) | V(\psi) \rangle_0  } ,\label{OmegaI_intersection_formula}\\ 
\Omega_{+}(U,V) &= 2\pi i \sum_{I=1}^n dv_I \wedge dq_I + 2\pi  i\sum_{\alpha} \underset{x=w_\alpha}{\mathrm{Res}} \big( d^{-1} U(\psi) V(\tau) +  d^{-1} U(\tau) V(\psi)  \big).  \label{Omegainf_formula}
\end{align}
\end{prop}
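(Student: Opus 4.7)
Starting from the residue formula (\ref{explicit}) for $\Omega$, the plan is to split the finite set of poles of the meromorphic function $d^{-1}U(\Psi)V(\Psi)$ on $\Sigma(\xi,\hbar)$ into two groups: the preimages of $\{w_\alpha,\infty\}$, i.e.\ the branch points lying over the poles of $\phi$, and the pairs $q_I^\pm$ covering the apparent singularities $q_I$. Each group is then analysed separately using the two Laurent expansions already provided in the text.

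Near the poles of $\phi$ we have the $\hbar$-expansion (\ref{hbar_expansion}), so $U(\Psi) = \hbar^{-1}U(\psi) + U(\sigma) + \hbar\, U(\tau) + O(\hbar^2)$, and similarly for $V$. Multiplying out $d^{-1}U(\Psi)\,V(\Psi)$ yields a formal Laurent series in $\hbar$, but by Lemma \ref{hbar3_does_not_contribute} all terms of order $\hbar^k$ with $k\ge 1$ carry enough zeros at $w_\alpha$ and $\infty$ to contribute no residue. Thus only the $\hbar^{-2}$, $\hbar^{-1}$, and $\hbar^{0}$ coefficients matter. The $\hbar^{-2}$ contribution is $2\pi i\sum_\alpha\mathrm{Res}_{x=w_\alpha}d^{-1}U(\psi)V(\psi)$, identified with $\omega(U,V)$ by Proposition \ref{residue_formula_prop}. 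The $\hbar^{-1}$ contribution is $2\pi i\sum_\alpha\mathrm{Res}_{x=w_\alpha}\big(d^{-1}U(\psi)V(\sigma)+d^{-1}U(\sigma)V(\psi)\big)$. The $\hbar^{0}$ contribution is the analogous expression with $\sigma$ replaced by $\tau$, together with a cross-term $d^{-1}U(\sigma)V(\sigma)$ which contributes zero residue at $w_\alpha,\infty$ because $\sigma$ is holomorphic there.

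Near $q_I$ we use (\ref{q_I contribution to Omega}). On the two branches $y_\pm$ differs by a global sign, so $d^{-1}U(\Psi)V(\Psi)$ agrees on both branches. A direct calculation of the Laurent expansion of $d^{-1}U(y)dx$ at $q_I$ shows that the leading $(x-q_I)^{-3}$ term carries no residue, and that the residue on each branch equals $\tfrac{1}{2}\big(U(u_I)V(q_I)-U(q_I)V(u_I)\big)$ where $u_I=p_I/\hbar+v_I$. Summing over the two branches and over $I$ gives
\begin{equation*}
2\pi i\sum_{I=1}^n (du_I\wedge dq_I)(U,V) = \frac{2\pi i}{\hbar}\sum_I(dp_I\wedge dq_I)(U,V) + 2\pi i\sum_I(dv_I\wedge dq_I)(U,V),
\end{equation*}
with no contribution at order $\hbar^{-2}$. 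Combining with the previous paragraph yields the decomposition and the formulas (\ref{Omega0_formula})--(\ref{Omegainf_formula}). Closure of each of $\Omega_-,\Omega_I,\Omega_+$ follows either by repeating the integration-by-parts argument from Proposition \ref{residue_formula_prop} for each Laurent coefficient, or more cleanly from the observation that $\Omega$ is closed on $X$ for every fixed $\hbar$ (being a pullback of the intersection pairing by $\mu(\xi,\hbar)$ and using flatness of the Gauss--Manin connection), so each $\hbar$-coefficient in its Laurent expansion is itself closed.

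The equality (\ref{OmegaI_intersection_formula}) will follow by applying the residue formula to the intersection pairings on $\Sigma_0$: the poles of $d^{-1}U(\psi)V(\sigma)$ and $d^{-1}U(\sigma)V(\psi)$ are at $\{w_\alpha,\infty\}$, matching the second summand in (\ref{OmegaI_formula}), together with the pairs $q_I^\pm$ where $\sigma$ has simple poles of residue $\pm 1/2$; a short calculation using the local forms of $\psi$ and $\sigma$ near $q_I^\pm$ shows these latter residues combine to $2\pi i\sum_I dp_I\wedge dq_I$. The main obstacle is the bookkeeping for the $q_I^\pm$ residues: one must simultaneously track the two branches, the double-pole cancellation, the splitting of $u_I$ into its $\hbar^{-1}p_I$ and $v_I$ parts, and the compatibility between the direct calculation on $\Sigma(\xi,\hbar)$ and the computation on $\Sigma_0$ needed for the intersection-pairing reformulation.
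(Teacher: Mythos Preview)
Your proposal is correct and follows essentially the same route as the paper: split the residue formula for $\Omega$ into contributions at the poles $w_\alpha,\infty$ of $\phi$ and at the apparent singularities $q_I$, use the $\hbar$-expansion (\ref{hbar_expansion}) together with Lemma \ref{hbar3_does_not_contribute} to truncate the former, use (\ref{q_I contribution to Omega}) to evaluate the latter as $2\pi i\sum_I du_I\wedge dq_I$, observe that the $\sigma$--$\sigma$ cross-term is holomorphic at $w_\alpha$, and deduce closure of each $\hbar$-coefficient from closure of $\Omega$ at fixed $\hbar$. Your additional paragraph justifying (\ref{OmegaI_intersection_formula}) via the residue formula on $\Sigma_0$ is slightly more explicit than the paper, which simply asserts that the computation ``yields (\ref{Omega0_formula}) through (\ref{Omegainf_formula})''.
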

\begin{proof}
We have 
\begin{align}
\Omega(U,V) = 2\pi i \sum_{\alpha} \hspace{-1mm} \underset{x=w_\alpha}{\text{Res}} \big(  d^{-1} U(\Psi) V(\Psi)\big) + 4 \pi i \sum_{I = 1}^n  \underset{x=q_I}{\text{Res}}\big( d^{-1}  U(\Psi) V(\Psi)).
\end{align}
From (\ref{q_I contribution to Omega}) we see that the second term contributes $\hbar^{-1}$ and $\hbar^{0}$ terms only.  In particular,  we may evaluate,  for $I = 1,...,n$ and $U,V \in T_\xi X$
\begin{align}
 \underset{x=q_I}{\text{Res}}\big( d^{-1} U(\Psi)  V(\Psi)) = \frac{1}{2}\bigg(\bigg(\frac{dp_I}{\hbar} + {dv_I}\bigg) \wedge dq_I  \bigg) \big(U,V\big). 
\end{align}
Meanwhile we have
\begin{align}
&\underset{x=w_\alpha}{\text{Res}} d^{-1} U(\Psi)  V(\Psi) \nonumber
= \underset{x=w_\alpha}{\text{Res}} d^{-1}U\Bigg(\frac{\psi}{\hbar}+{\sigma}+\hbar\tau + \cdots \Bigg) V\Bigg(\frac{\psi}{\hbar}+{\sigma}+\hbar\tau + \cdots \Bigg) \\
  = &\underset{x=w_\alpha}{\text{Res}} \Bigg( d^{-1}U\Bigg(\frac{\psi}{\hbar}+{\sigma}\Bigg)V\Bigg(\frac{\psi}{\hbar}+{\sigma}\Bigg)  + {d^{-1} U(\psi)  V(\tau) + d^{-1}  U(\tau) V(\psi)} \hspace{-1mm} \Bigg),  
\end{align}
since from Lemma \ref{hbar3_does_not_contribute} we know the remaining terms in the expansion correspond to taking the residues of $1$-forms holomorphic at $x = w_\alpha$.  Also note that 
\begin{align}
 \underset{x=w_\alpha}{\mathrm{Res}} \big( d^{-1} U(\sigma) V(\sigma) \big) = 0,  
\end{align}
since $\sigma$ is holomorphic at $x = w_\alpha$.  Defining $\Omega_{-},\Omega_{I},\Omega_{+}$ to be the terms in the expansion of $\Omega$ of appropriate order in $\hbar$ then yields (\ref{Omega0_formula}) through (\ref{Omegainf_formula}).  The closure of these $2$-forms follows immediately from the closure of $\Omega$ for fixed $\hbar$ (which follows from integration by parts akin to Proposition \ref{residue_formula_prop}).  
\end{proof}
Let $\Omega$ define a $2$-form on $X \times \mathbb{CP}^1$ by letting $\hbar = u^1/u^0$ where $[u^0:u^1] \in \mathbb{CP}^1$.  From $L(\hbar) \subseteq \ker \Omega$ (Proposition \ref{flows_in_kernel}) and Proposition \ref{Omega_is_O(2)_valued} above,  we may (Proposition \ref{practical_definition_twistor_form}) interpret $\Omega$ as a closed $\mathcal{O}(2)$-valued relative $2$-form on the twistor space $Z = X \times \mathbb{CP}^1 / L$.  \par 
The $2$-form $\Omega_{-}$ is simply the pullback of $\omega$ defined on the base $M$.  $\Omega_{+}$ is more complicated but simple to describe when restricted to the vectors vertical for the projection to $M$ as the following lemma shows.  
\begin{lemma}\label{vertical_symplectic_form}
Let $VX$ denote the vertical bundle for the projection $X \to M$.  Then 
\begin{align}\label{vert_form}
\Omega|_{VX} = \Omega_{+}|_{VX} = 2\pi i\sum_{I=1}^{n} dv_I \wedge dq_I.  
\end{align}
\end{lemma}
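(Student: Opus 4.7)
The plan is to read off each of the three pieces $\Omega_{-}$, $\Omega_{I}$, $\Omega_{+}$ of $\Omega$ from Proposition \ref{Omega_is_O(2)_valued} on vertical vectors $U,V \in V_\xi X$, show that the first two vanish and that the residue terms in the third also vanish, leaving the claimed expression. The single unifying observation is that the $1$-form $\psi = y_0\,dx$ depends only on the point $p = \pi(\xi) \in M$ (since $y_0^2 = Q_0(x)$ and $Q_0$ is determined by $p$), so for any vertical $U$ one has $U(\psi) = U(y_0)\,dx = 0$.

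First I would dispose of $\Omega_{-}|_{VX}$: the residue formula (\ref{Omega0_formula}) expresses $\Omega_{-}(U,V)$ purely in terms of $U(\psi)$ and $V(\psi)$, which are zero on vertical vectors, so $\Omega_{-}|_{VX} = 0$. (Equivalently, $\Omega_{-}$ is the pullback of $\omega$ from $M$.) Next, for $\Omega_{I}|_{VX}$, the residue terms in (\ref{OmegaI_formula}) again involve $U(\psi)$ or $V(\psi)$ and hence vanish. For the remaining $dp_I \wedge dq_I$ piece, recall that $p_I$ is fixed up to sign by $p_I^2 = Q_0(q_I)$, so as a function on $X$ it depends only on the base coordinates and on $q_I$; in particular on vertical vectors $dp_I = (\partial p_I/\partial q_I)\,dq_I$, and so $dp_I \wedge dq_I|_{VX} = 0$.

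The only case that requires a little more care is $\Omega_{+}|_{VX}$. Starting from (\ref{Omegainf_formula}), I would show the residue terms at each pole $w_\alpha$ vanish on vertical vectors. The pieces $d^{-1}U(\tau)\,V(\psi)$ vanish outright since $V(\psi) = 0$. For $d^{-1}U(\psi)\,V(\tau)$, one has $U(\psi) = 0$ so $d^{-1}U(\psi)$ is (locally) a constant; by Lemma \ref{hbar3_does_not_contribute} the $1$-form $\tau$ is holomorphic in a neighbourhood of $w_\alpha$, and so $V(\tau)$ is too, giving no residue. What survives is exactly the $2\pi i \sum_I dv_I \wedge dq_I$ term, and since $v_I$ and $q_I$ are fibre coordinates this form restricts to itself on $VX$.

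There is no real obstacle here — the lemma is essentially a bookkeeping consequence of Proposition \ref{Omega_is_O(2)_valued} together with the triviality of $U(\psi)$ for vertical $U$. The only subtlety worth emphasising is the use of Lemma \ref{hbar3_does_not_contribute} to control $\tau$ near the poles $w_\alpha$; without it, one could not conclude that the $d^{-1}U(\psi)\,V(\tau)$ contribution is residue-free despite $U(\psi)$ vanishing.
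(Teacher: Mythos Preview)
Your proof is correct and follows the same approach as the paper, namely the observation that $U(\psi)=V(\psi)=0$ for vertical $U,V$ kills all the residue terms in the formulae of Proposition~\ref{Omega_is_O(2)_valued}. Your treatment is in fact more thorough than the paper's one-line proof: you explicitly dispose of $\Omega_-$ and $\Omega_I$ (including the observation that $dp_I\wedge dq_I$ vanishes on $VX$ because $p_I$ depends only on $q_I$ and base coordinates), whereas the paper leaves these implicit. One small simplification: your appeal to Lemma~\ref{hbar3_does_not_contribute} to control $V(\tau)$ is valid but unnecessary, since the anti-derivative in the residue formula may be chosen arbitrarily, and with $U(\psi)=0$ one simply takes $d^{-1}U(\psi)=0$.
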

\begin{proof}
Let $U,V \in VX$ then since $U(\psi) = V(\psi) = 0$ the formula (\ref{Omegainf_formula}) reduces to (\ref{vert_form}).  
\end{proof}

\begin{cor}[Characterisation of isomonodromic flows II]
Interpreting $\Omega$ as a $2$-form on $X \times \mathbb{CP}^1$ we have 
\begin{align}
\operatorname{ker} \Omega = L \oplus p_2^*T\mathbb{CP}^1,  
\end{align}
where $L$ is the twistor distribution and $p_2$ is the projection $X \times \mathbb{CP}^1  \to \mathbb{CP}^1$.  
\end{cor}
\begin{proof}
Lemma \ref{flows_in_kernel} states $L \subseteq \operatorname{ker} \Omega$ and Proposition \ref{explicit_flows_calc_prop} implies $L$ is of rank $2n$.  Clearly $p_2^*T\mathbb{CP}^1  \subseteq \ker \Omega$.  
By Lemma \ref{vertical_symplectic_form},  we see that $ \ker \Omega$ is transverse to $VX$ for all $\hbar \in \mathbb{C}^*$ and therefore of rank at most $2n+1$.  
\end{proof}
The above corollary implies that $\Omega$ has maximal rank as a relative $\mathcal{O}(2)$-valued $2$-form on $Z$.  Applying Theorem \ref{twistor_characterisation},  we have proven:
\begin{thm}
$\Omega$ defines a complex hyper-K\"ahler metric $g$ on $X$.  
\end{thm}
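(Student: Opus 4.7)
The plan is to invoke the twistor characterisation of complex hyper-K\"ahler metrics from Section \ref{HKintro}, which reduces the task to verifying two things: that $L$ is an integrable twistor distribution on $X$, and that $\Omega$ defines a closed relative $\mathcal{O}(2)$-valued $2$-form of maximal rank on the twistor space $Z = (X \times \mathbb{CP}^1)/L$.

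The second ingredient is essentially assembled from the preceding material. Proposition \ref{explicit_flows_calc_prop} exhibits $L$ as a twistor distribution of the form (\ref{Lax_distribution}). Proposition \ref{Omega_is_O(2)_valued} writes $\Omega = \Omega_-/\hbar^2 + i\Omega_I/\hbar + \Omega_+$ with the $\Omega_\bullet$ closed $2$-forms on $X$. Lemma \ref{flows_in_kernel} together with the corollary immediately preceding the theorem give $\ker \Omega = L \oplus p_2^* T\mathbb{CP}^1$. Proposition \ref{practical_definition_twistor_form} then identifies $\Omega$ with a closed relative $\mathcal{O}(2)$-valued $2$-form on $Z$, and the kernel statement translates exactly into maximal rank on each fibre of $p: Z \to \mathbb{CP}^1$.

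Only Frobenius integrability of $L(\hbar)$ for each fixed $\hbar \in \mathbb{C}^*$ remains, and this is the step I expect to require the most care. The cleanest route is to view $\Omega(\hbar) \coloneqq \Omega_-/\hbar^2 + i\Omega_I/\hbar + \Omega_+$, for fixed $\hbar$, as a $2$-form on $X$. As a scalar linear combination of closed $2$-forms it is closed, and since $\Omega$ has no $d\hbar$ component, restricting $\ker \Omega = L \oplus p_2^* T\mathbb{CP}^1$ to the slice $X \times \{\hbar\}$ identifies the kernel of $\Omega(\hbar)$ with $L(\hbar)$, which has constant rank $2n$. The standard fact that the constant-rank kernel of a closed $2$-form is involutive then yields integrability of $L(\hbar)$: explicitly, for $U, V \in L(\hbar)$, Cartan's formula gives $0 = d\Omega(\hbar)(U,V,W) = -\Omega(\hbar)([U,V],W)$ for all $W$, whence $[U,V] \in L(\hbar)$. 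A more conceptual fallback, should the above be awkward, is to argue directly: $L(\hbar)$ is spanned by infinitesimal isomonodromic flows, and since composition of flows preserving the partial monodromy data still preserves it, the bracket of two isomonodromic vector fields is again isomonodromic and hence lies in $L(\hbar)$.

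With both conditions established, the twistor characterisation theorem immediately produces the complex hyper-K\"ahler structure $g$ on $X$, as claimed.
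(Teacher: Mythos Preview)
Your proposal is correct and follows essentially the same approach as the paper: the theorem is stated as an immediate consequence of the preceding assembly of results (the twistor distribution from Proposition~\ref{explicit_flows_calc_prop}, the $\mathcal{O}(2)$ form of $\Omega$ from Proposition~\ref{Omega_is_O(2)_valued}, the kernel identification from the corollary, and then the twistor characterisation theorem). The paper's ``proof'' is literally the sentence preceding the theorem, so your write-up is if anything more thorough.

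The one point worth highlighting is that you explicitly supply the integrability argument for $L(\hbar)$, which the paper leaves implicit. Your argument via the constant-rank kernel of a closed $2$-form is clean and correct, and is indeed the natural one given the ingredients at hand; the isomonodromy fallback you mention is also sound. The paper presumably regards integrability as folded into either the isomonodromy interpretation or the kernel identification, but does not spell it out, so your version closes a small expository gap.
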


\section{Symmetries of $g$ and the Joyce structure} \label{Joyce_finalisation}
Having shown $X$ has a complex hyper-K\"ahler structure,  in this section we will show that we can equip $M$ with a Joyce structure.  First,  we demonstrate that the hyper-K\"ahler metric $g$ constructed above has two required symmetries.  Then,  we construct an open (but not in general injective) immersion from $X$ to an algebraic torus bundle $\mathbb{T} \to M$ that is naturally a quotient of $TM$ by the period lattice.  This is a relatively straightforward generalisation of the \textit{abelian holonomy map} in \cite{BM}.  Equivariance with respect to deck transformations enables us to push forward all the data to open subsets of $\mathbb{T}$ and we show the resulting structure satisfies the symmetries of a Joyce structure (Definition \ref{J}). 
\begin{prop}[Fibrewise involution symmetry]\label{Involution1}
The map $\hat{\iota}: X \to X$ given by $$(a,q,p,v) \mapsto (a,q,-p,v)$$ satisfies
\begin{align}
\hat{\iota}^*I=I,  \quad \hat{\iota}^*(J \pm iK) = -(J \pm iK),  \quad \hat{\iota}^*g = -g.  
\end{align}
\end{prop}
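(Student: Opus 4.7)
The plan is to realise $\hat\iota$ as implementing the involution $\hbar\mapsto-\hbar$ of the spectral parameter at the level of the potential $Q(x)$, and then transfer this to the hyper-K\"ahler tensors via the twistor characterisation.

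First I would check $\hat\iota^*Q(x,\hbar)=Q(x,-\hbar)$, which amounts to $Q_0,Q_2$ being fixed by $\hat\iota$ and $Q_1$ being negated. The invariance of $Q_0$ is immediate. The ``principal part'' $\sum_I p_I/(x-q_I)$ of $Q_1$ is manifestly odd in $p$, and the formula (\ref{Rfrac}) exhibits $R(x)$ as linear in the $p_I$, so $Q_1\to -Q_1$. The formula for $S(x)$ involves only $\{a_i^{(\alpha)},w_\alpha,q_I,v_I\}$ and no $p_I$, so $Q_2$ is fixed.

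Substituting into the residue expressions of Proposition \ref{Omega_is_O(2)_valued} gives the action of $\hat\iota^*$ on each summand of $\Omega(\hbar)=\Omega_-/\hbar^2+i\Omega_I/\hbar+\Omega_+$. Since $\Omega_-=\omega$ is a pullback from $M$ and $\hat\iota$ commutes with $X\to M$, $\hat\iota^*\Omega_-=\Omega_-$. In $\Omega_I$, the summand $\sum dp_I\wedge dq_I$ flips sign under $dp_I\mapsto-dp_I$, and the intersection terms involve $\sigma=Q_1/(2y_0)\,dx\mapsto-\sigma$, so $\hat\iota^*\Omega_I=-\Omega_I$. In $\Omega_+$, the summand $\sum dv_I\wedge dq_I$ is invariant, and the intersection terms involve $\tau=(4Q_0Q_2-Q_1^2)/(8y_0^3)\,dx$, which depends on $Q_1$ only through $Q_1^2$ and is therefore fixed, so $\hat\iota^*\Omega_+=\Omega_+$. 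In short, $\hat\iota^*\Omega(\hbar)=\Omega(-\hbar)$.

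The hyper-K\"ahler tensors are now read off via the twistor characterisation. Since $L(\hbar)$ is the horizontal part of $\ker\Omega(\hbar)$, the previous identity gives $\hat\iota_*L(\hbar)=L(-\hbar)$. In the basis $L(\hbar)=\mathrm{span}\{U_a+V_a/\hbar\}$, taking the limits $\hbar\to\infty$ and $\hbar\to0$ shows that $\hat\iota_*$ preserves $\mathrm{span}\{U_a\}$ and $\mathrm{span}\{V_a\}$, which are the $\mp i$-eigenspaces of $I$; hence $\hat\iota^*I=I$. Using $\Omega_I=g(I\cdot,\cdot)$ together with $\hat\iota^*I=I$ and $\hat\iota^*\Omega_I=-\Omega_I$, non-degeneracy of $I$ yields $\hat\iota^*g=-g$. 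Finally, from $\Omega_\pm=\tfrac{1}{2}g((J\pm iK)\cdot,\cdot)$, the identities $\hat\iota^*\Omega_\pm=\Omega_\pm$ and $\hat\iota^*g=-g$, together with non-degeneracy of $g$, give $\hat\iota^*(J\pm iK)=-(J\pm iK)$, completing the claim.

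The main obstacle is really the first step: verifying carefully that $R(x)$ (and hence $Q_1$) is genuinely odd in the $p_I$ while $S(x)$ is genuinely independent of them, so that $\hat\iota$ truly corresponds to $\hbar\mapsto-\hbar$. Once this parity is in place, everything else is a formal consequence of the twistor characterisation and the explicit form of $\Omega$.
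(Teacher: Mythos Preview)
Your proof is correct and takes a genuinely different route from the paper's. The paper obtains the first two equalities by direct inspection of the explicit isomonodromic flows in Appendix~\ref{explicit_flows}, verifying that each $U_I,\,U_i^{(\alpha)}$ is fixed and each $V_I,\,V_i^{(\alpha)}$ changes sign under $\hat\iota$; it then computes $\hat\iota^*\Omega_I=-\Omega_I$ separately (via $\sigma\mapsto-\sigma$) and combines this with $\hat\iota^*I=I$ to get $\hat\iota^*g=-g$. You instead observe the single organising principle $\hat\iota^*Q(x,\hbar)=Q(x,-\hbar)$, deduce the full set of parities $\hat\iota^*\Omega_\pm=\Omega_\pm$, $\hat\iota^*\Omega_I=-\Omega_I$ from the residue formulae of Proposition~\ref{Omega_is_O(2)_valued}, and then extract all three identities purely from the twistor dictionary. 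This avoids any appeal to the explicit vector fields in the appendix and makes the underlying mechanism transparent; the paper's approach, by contrast, is quicker if one already has those formulae in hand. One small refinement: your limit argument for the preservation of $\mathrm{span}\{U_a\}$ and $\mathrm{span}\{V_a\}$ is fine, but it is cleaner to note directly that these are $\ker\Omega_+$ and $\ker\Omega_-$ respectively, which are preserved since $\hat\iota^*\Omega_\pm=\Omega_\pm$.
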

\begin{proof}
The first two equalities follow immediately from the definitions of the complex structures (\ref{quaternions}) and that,  after inspecting the formulae for the isomonodromic flows in Appendix \ref{explicit_flows},  the vector fields $V_I,  V_i^{(\alpha)}$ are odd under simultaneously changing the signs of each $p_J$ while the $U_I,  U_i^{(\alpha)}$ remain unchanged.  \par For the last equality note that $\psi$ is invariant under $\hat{\iota}$ since it depends only on the projection to $M$ while inspecting the formula for $Q_1$ we see $\sigma \mapsto -\sigma$.  The expansion (\ref{q_I contribution to Omega}) shows that the $\hbar^{-1}$ term in the contribution to $\Omega$ from residues at points corresponding to $x = q_I$ is odd.   Accordingly,  we see that $\hat{\iota}^*\Omega_I = -\Omega_I$ which,  using the definition $\Omega_{I} = g(I\cdot,\cdot)$ together with the first equality shows the last equality.  
\end{proof}
To demonstrate the hyper-K\"ahler structure has the appropriate homogeneity properties,  the following will be helpful: 
\begin{lemma}\label{homogeneity_lemma}
A complex hyper-K\"ahler structure admits a vector field $E$ satisfying 
\begin{align}\label{twistor_homogeneity}
\mathcal{L}_E\Omega_{-} = 2\Omega_{-},  \quad \mathcal{L}_E\Omega_{I} = \Omega_{I},  \quad \mathcal{L}_E\Omega_{+} = 0,  
\end{align}
if and only if 
\begin{align}\label{homogeneity_E}
\mathcal{L}_EI = 0,  \quad \mathcal{L}_{E}(J \pm iK) = \mp(J \pm iK),  \quad \mathcal{L}_Eg = g.  
\end{align}
\end{lemma}
\begin{proof}
That (\ref{homogeneity_E}) implies (\ref{twistor_homogeneity}) is obvious from the Leibniz rule and the definitions of the $2$-forms.  Conversely,  suppose (\ref{twistor_homogeneity}) holds.  
Using the Leibniz rule and the definitions of the relevant $2$-forms we have equations:
\begin{align}
g(\mathcal{L}_EI\cdot,\cdot) + (\mathcal{L}_Eg)(I\cdot,\cdot) &= g(I\cdot,\cdot),  \label{I_sym} \\ 
g(\mathcal{L}_EJ\cdot, \cdot) + (\mathcal{L}_Eg)(J\cdot,\cdot) - i(g(\mathcal{L}_EK\cdot,\cdot) + (\mathcal{L}_Eg)(K\cdot,\cdot)) &= 2(g(J\cdot,\cdot)-ig(K\cdot,\cdot)), \\
g(\mathcal{L}_EJ\cdot, \cdot) + (\mathcal{L}_Eg)(J\cdot,\cdot) + i(g(\mathcal{L}_EK\cdot,\cdot) + (\mathcal{L}_Eg)(K\cdot,\cdot)) &= 0. 
\end{align}
Rewrite the last two equations as
\begin{align}
g(\mathcal{L}_EJ\cdot,\cdot) + (\mathcal{L}_Eg)(J\cdot,\cdot) &= g(J\cdot,\cdot) - ig(K\cdot,\cdot),  \\
g(\mathcal{L}_EK\cdot,\cdot) + (\mathcal{L}_Eg)(K\cdot,\cdot) &= i(g(J\cdot,\cdot) - ig(K\cdot,\cdot)).  \label{K_eqn}
\end{align}
Precompose the first and second equations with $K$ and $J$ respectively,  then add and subtract the two equations to obtain
\begin{align}
g((\mathcal{L}_EJ)K\cdot,\cdot) + g((\mathcal{L}_EK)J\cdot,\cdot) &= 0,  \\
g((\mathcal{L}_EJ)K\cdot,\cdot) - g((\mathcal{L}_EK)J\cdot,\cdot) &= -2(\mathcal{L}_Eg)(I\cdot,\cdot) + 2g(I\cdot,\cdot) + 2ig(\cdot,\cdot).
\end{align}
Putting the above together with (\ref{I_sym}) and $\mathcal{L}_EI = (\mathcal{L}_EJ)K + J(\mathcal{L}_EK)$,  we see
\begin{align}
2ig(\cdot,\cdot) + 2g(J(\mathcal{L}_EK),\cdot) = 0
\end{align}
so that $\mathcal{L}_EK = iJ$.  Substituting this into (\ref{K_eqn}) gives
\begin{align}
(\mathcal{L}_Eg)(K\cdot,\cdot) = g(K\cdot,\cdot)
\end{align}
and hence,  after precomposing with $K$,  we obtain $\mathcal{L}_Eg = g$.  
Using this,  it is straightforward,  again using the Leibniz rule,  to verify the rest of (\ref{homogeneity_E}). 
\end{proof}
\begin{prop}[Homothetic symmetry]\label{Euler1}
The vector field $E$ on $X$ given by 
\begin{align}
\nonumber E = &\sum_{i = 0}^{2m_\infty - 7} \frac{4m_\infty-10-2i}{2m_\infty-3} \, a_i^{(\infty)} \frac{\partial}{\partial a_{i}^{(\infty)}} + \sum_{\alpha=1}^{N}\sum_{i=1}^{2m_\alpha-1} \frac{4m_\infty-10+2i}{2m_\infty-3} \,  a_i^{(\alpha)}\frac{\partial}{\partial a_{i}^{(\alpha)}} \\ + & \ \, \sum_{\alpha=1}^{N}\frac{2}{2m_\infty-3} \,  w_\alpha \frac{\partial}{\partial w_\alpha} + \sum_{I=1}^{n} \frac{2}{2m_\infty-3}q_I \frac{\partial}{\partial q_I} -  \sum_{I=1}^{n} \frac{2}{2m_\infty-3}v_I \frac{\partial}{\partial v_I}.  
\end{align}
satisfies \textup{(\ref{homogeneity_E})}.  
\end{prop}
\begin{proof}
By inspecting the homogeneity of $Q_0(x),Q_1(x),Q_2(x)$ with respect to the parameters,  we see
\begin{align}
\Bigg( E + \frac{2}{2m_\infty -3}x\frac{\partial}{\partial x} + \hbar\frac{\partial}{\partial \hbar} \Bigg)Q(x) = -\frac{4}{2m_\infty-3}Q(x).  
\end{align}
On the open region where $\Sigma(\xi,\hbar)$ is non-singular we rewrite this equality in terms of the tautological $1$-form $\Psi$ as:
\begin{align}
E(\Psi) + \frac{1}{2m_\infty-3}\frac{x Q'(x)}{y}dx - \frac{1}{y}\Bigg(\frac{Q_0(x)}{\hbar^2}+\frac{Q_1(x)}{2\hbar} \Bigg)dx = -\frac{2}{2m_\infty-3}ydx.  
\end{align}
Integrating by parts using $d(xy) = ydx + xQ'(x)/2y$ yields
\begin{align}
[E(\Psi)] = \Bigg[\frac{1}{y}\Bigg(\frac{Q_0(x)}{\hbar^2}+\frac{Q_1(x)}{2\hbar} \Bigg)dx\Bigg] =: [\Phi]. 
\end{align}
Treat $\hbar$ as a fixed constant and use Cartan's formula:
\begin{align}
(\mathcal{L}_E\Omega)(U,V) &= d(\Omega(E,\cdot))(U,V)  \nonumber \\
&= U(\Omega(E,V)) - V(\Omega(E,U)) - \Omega(E,[U,V]) \nonumber \\
&= U( \langle \Phi | V(\Psi) \rangle) - V (\langle \Phi | U(\Psi) \rangle ) - \langle \Phi | [U,V](\Psi)\rangle \nonumber \\
&= \langle U(\Phi) | V(\Psi) \rangle - \langle V(\Phi) | U(\Psi) \rangle.  
\end{align}
Note that 
\begin{align}
\Phi = \hbar \frac{\partial \Psi}{\partial \hbar} dx.  
\end{align}
We combine this observation with (\ref{hbar_expansion}).   
Near the poles of $Q_0(x)$,  expand in orders of $\hbar$
\begin{align}
\frac{1}{y}\Bigg(\frac{Q_0(x)}{\hbar^2}+\frac{Q_1(x)}{2\hbar} \Bigg) dx = \frac{\psi}{\hbar} - \hbar\tau + O(\hbar^2),  
\end{align}
where the omitted terms have  (see Lemma (\ref{hbar3_does_not_contribute})) zeroes of order at least $2m_\alpha - 2$ at $x = w_\alpha$.  
Similarly,  from (\ref{q_I contribution to Omega}) we have the Laurent expansion near $x = q_I$:
\begin{align}
\frac{1}{y}\Bigg(\frac{Q_0(x)}{\hbar^2}+\frac{Q_1(x)}{2\hbar} \Bigg) = \hbar^{-1}\frac{p_I}{\sqrt{3}} + O(x-q_I).  
\end{align}
Then
\begin{align}
&\langle U(\Phi) | V(\Psi) \rangle  = 2\pi i \sum_{\alpha} \hspace{-1mm} \underset{x=w_\alpha}{\text{Res}} \big( d^{-1} U(\Phi) V(\Psi)\big) + 4 \pi i \sum_{I = 1}^n  \underset{x=q_I}{\text{Res}} ( d^{-1} U(\Phi)  V(\Psi)) \nonumber \\ &= 2\pi i \sum_{\alpha} \underset{x=w_\alpha}{\mathrm{Res}} \bigg( \hspace{-1mm} d^{-1}U\bigg(\frac{\psi}{\hbar} - \hbar\tau \hspace{-1mm} \bigg) V\bigg(\frac{\psi}{\hbar} + \sigma + \hbar\tau \hspace{-1mm}  \bigg)  \hspace{-1.5mm} \bigg) + 2\pi i \sum_{I=1}^n  \frac{dp_I(U)dq_I(V)}{\hbar}  + \cdots \hspace{-0.5mm} ,   
\end{align}
where the omitted terms are proportional to $dq_I(U)dq_I(V)$ and do not contribute to the result of anti-symmetrising the above in $U$ and $V$.  Comparing to (\ref{Omega0_formula}) and (\ref{OmegaI_formula}) we obtain the following equality,  which holds on an open dense region of $X$ and hence on $X$:
\begin{align}\label{twistor_homogeneity_h}
(\mathcal{L}_E\Omega)(U,V) = \langle U(\Phi) | V(\Psi) \rangle - \langle V(\Phi) | U(\Psi) \rangle = \frac{2\Omega_{-}(U,V)}{\hbar^2} + \frac{2i\Omega_{I}(U,V)}{\hbar}.
\end{align}
That this holds to all orders in $\hbar$ is equivalent to (\ref{twistor_homogeneity}),  which implies (\ref{homogeneity_E}). 
\end{proof}
Next,  we define a map $\Theta: X \to \mathbb{T}$ where $\mathbb{T} = TM / \mathcal{G}$.  Recall that $\mathcal{G}$ is the lattice generated by integer linear combinations of fundamental (co)cycles pulled back by the isomorphism $\mu_0: TM \to \mathcal{E}$ induced by the Gauss-Manin connection. \par
$\mathbb{T}$ is equipped with \textit{period coordinates} $\{z^{a},\theta^{a}\}_{a=1}^{2n}$, where $\{z^a\}_{a=1}^{2n}$ are the period coordinates on $M$ and given $p \in M$ and $\omega \in H^1(\Sigma_0(p),\mathbb{C}) \cong T_pM$ 
\begin{align}
\theta^{a} = \oint_{\gamma_{a}} \omega \ \,  \mod 2\pi i.  
\end{align}
$\theta^{a}$ is the usual fibre (tangent) coordinate associated with $z^{a}$ in the sense that
\begin{align}
\theta^{a} \Bigg( \sum_{b=1}^{2n}V^b\frac{\partial}{\partial z^{b}}\Bigg) = \oint_{\gamma_{a}} \sum_{b=1}^{2n} V^b \frac{\partial y_0}{\partial z^b} dx = \sum_{b=1}^{2n} V^b\frac{\partial z^a}{\partial z^b} = V^a.  
\end{align}

For $a = 1,...,n$ we let $A_{a} \coloneqq \gamma_{a}$ (the $A$-cycles) and $B_{a} \coloneqq \gamma_{a+n}$ (the $B$-cycles).  \par
\begin{prop}
The map $\Theta: X \to \mathbb{T}$ over $M$ given in period coordinates by 
\begin{align}
\theta^{a} = -\oint_{\gamma_{a}} \varpi
\end{align}
for $a = 1,...,2n$ where 
\begin{align}
\varpi = \sigma + 
\sum_{I=1}^{n}\frac{dx}{2(x-q_I)}
\end{align}
is an immersion with image an open dense subset of $\mathbb{T}$.  
\end{prop}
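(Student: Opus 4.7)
My overall plan is to reduce the statement to injectivity of the vertical derivative of $\Theta$ (so that $\Theta$ is automatically a local biholomorphism of equidimensional manifolds) and to prove that injectivity via a residue computation of the intersection pairing on $\Sigma_0(p)$. Since $\dim X = \dim \mathbb{T} = 4n$ and $\Theta$ covers $M$, in coordinates $(z_a, q_I, v_I)$ on $X$ and period coordinates $(z_a,\theta_a)$ on $\mathbb{T}$ the Jacobian $d\Theta$ is block triangular with the identity on the base directions. Injectivity thus reduces to that of the vertical block $A : V_\xi X \to T_p M$; identifying $T_p M$ with $H^1(\Sigma_0(p),\mathbb{C})$ via $\mu_0$, the map $A$ sends $W$ to $[W(\varpi)]$, and I need to show the $2n$ classes $\{\partial_{q_I}[\varpi],\,\partial_{v_I}[\varpi]\}_{I=1}^n$ are linearly independent in $H^1(\Sigma_0(p),\mathbb{C})$.

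First I would analyse $\partial_{v_I}\varpi$. Using (\ref{Rfrac}) and the independence of $y_0,\,q_J,\,p_J$ from $v_I$, one finds $\partial_{v_I}\varpi = (p_I/y_0)\lambda_I(x)\,dx$ where $\lambda_I(x) := (F(q_I)/F(x))\prod_{J\ne I}(x-q_J)/(q_I-q_J)$ is the Lagrange interpolant satisfying $\lambda_I(q_J) = \delta_{IJ}$. A pole-order check at the branch points $w_\alpha$ (using $s_\alpha^2 = x-w_\alpha$), at $\infty$, and at the $q_J^\pm$ shows $\partial_{v_I}\varpi$ is a \emph{holomorphic} 1-form on $\Sigma_0(p)$; and if $\sum_I c_I p_I \lambda_I = 0$ then evaluating at $x = q_J$ gives $c_J p_J = 0$, so the $\{\partial_{v_I}[\varpi]\}$ are linearly independent and form a basis of $H^{1,0}(\Sigma_0(p))$. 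Next, I would expand $\partial_{q_I}\varpi = \partial_{q_I}\sigma + dx/(2(x-q_I)^2)$ at each candidate pole. At the sheet $q_I^+$ on which $y_0 = +p_I$, the leading double-pole term of $\partial_{q_I}\varpi$ equals $dx/(x-q_I)^2$, and the coefficient of $1/(x-q_I)$ vanishes after using the relation $\partial_{q_I} p_I = Q_0'(q_I)/(2p_I)$ (a consequence of $p_I^2 = Q_0(q_I)$). At $q_I^-$ the double pole cancels by the sign flip of $y_0$, and the same calculation forces the single-pole coefficient to vanish, so $\partial_{q_I}\varpi$ is regular at $q_I^-$. Regularity at $q_J^\pm$ for $J\ne I$, at each $w_\alpha$, and at $\infty$ follows because $\partial_{q_I}R$ has pole orders bounded by those of $R$ itself (same rational-function class) and $dx/(2(x-q_I)^2)$ is holomorphic away from $q_I$. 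Hence the only pole of $\partial_{q_I}\varpi$ on $\Sigma_0(p)$ is the double pole (with zero residue) at $q_I^+$.

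With these pole structures, suppose $\sum_I c_I\partial_{q_I}[\varpi] + \sum_I d_I\partial_{v_I}[\varpi] = 0$ and pair with $\partial_{v_J}[\varpi]$. The second sum contributes zero since the intersection pairing of two holomorphic 1-forms vanishes on a Riemann surface. By the same integration-by-parts as in Proposition \ref{residue_formula_prop}, the remaining pairing is $2\pi i$ times the residue at $q_I^+$ of $d^{-1}(\partial_{v_J}\varpi)\cdot\partial_{q_I}\varpi$. Using $\partial_{v_J}\varpi = \delta_{IJ}\,dx + O(x-q_I)$ at $q_I^+$ (since $\lambda_J(q_I) = \delta_{IJ}$ and $y_0(q_I^+) = p_I$), the antiderivative is $C + \delta_{IJ}(x-q_I) + O((x-q_I)^2)$, and multiplying by $\partial_{q_I}\varpi = dx/(x-q_I)^2 + \mathrm{holo}$ picks out residue $\delta_{IJ}$. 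Hence the matrix $\langle\partial_{v_J}[\varpi]\,|\,\partial_{q_I}[\varpi]\rangle = 2\pi i\,\delta_{IJ}$ is non-degenerate, forcing all $c_I = 0$, and then all $d_I = 0$ by the basis property established above. So $d\Theta$ is injective, $\Theta$ is a local biholomorphism, and its image is open. For density I would argue, as in the abelian holonomy construction of \cite{BM}, that on each fibre $\Theta|_{X_p} : X_p \to \mathbb{T}_p$ is an Abel-Jacobi-type map between equidimensional complex manifolds and a local biholomorphism by the above, so its image is a constructible subset of the irreducible complex torus $\mathbb{T}_p$ of full dimension $2n$, hence dense.

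The hard part will be the Laurent-expansion bookkeeping in the second paragraph: in particular the cancellation of the single-pole coefficient of $\partial_{q_I}\varpi$ at $q_I^-$ (which rests simultaneously on the relation $\partial_{q_I} p_I = Q_0'(q_I)/(2p_I)$ and on the expansion of $1/y_0$ at $q_I^-$ to first order) and the regularity of $\partial_{q_I}\varpi$ at the poles of $\phi$, which depends on the prescribed pole orders allowed in the definition of $R$ in $Q_1$.
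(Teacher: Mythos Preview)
Your immersion argument is correct and takes a genuinely different route from the paper. The paper proceeds by explicitly decomposing $2\sigma$ as a sum of normalised differentials of the third kind $\nu_I$ (poles at $(q_I,p_I)$ and $\infty$) and first kind $\omega_a$, so that the $A$-periods of $\varpi$ read off the coefficients $t_a$ and the $B$-periods give $\sum_b B_{ab}t_b$ plus the components of the Abel--Jacobi map on the divisor $D(\xi)=\sum_I((q_I,p_I)-\infty)$. Immersion then follows from Abel's theorem together with a degree argument ruling out non-trivial principal divisors of the relevant shape on the hyperelliptic curve. Your approach bypasses this classical machinery entirely: you compute the vertical differential of $\Theta$ directly as $W\mapsto [W(\varpi)]\in H^1(\Sigma_0(p),\mathbb{C})$, identify the $\partial_{v_I}\varpi$ as a basis of holomorphic differentials, pin down the single double pole (zero residue) of $\partial_{q_I}\varpi$, and then read off non-degeneracy of the pairing matrix $\langle\partial_{v_J}\varpi\,|\,\partial_{q_I}\varpi\rangle_0=2\pi i\,\delta_{IJ}$ via the residue formula. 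This is self-contained, elementary, and fits naturally with the residue-calculus methods used throughout the paper; the paper's route has the virtue of making the map globally explicit, which it then exploits for density.

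That is where your argument has a genuine gap. Saying the image is ``a constructible subset of full dimension, hence dense'' is not enough: $\Theta|_{X_p}$ involves period integrals and is not a morphism of algebraic varieties in any obvious sense, so Chevalley-type constructibility does not apply, and an open subset of $(\mathbb{C}^*)^{2n}$ need not be dense in the analytic topology. The paper obtains density precisely from the Abel--Jacobi identification you only gesture at: Jacobi inversion gives surjectivity of $\mathrm{Sym}^n\Sigma_0\to\mathrm{Jac}(\Sigma_0)$, and the divisors with distinct $q_I$ away from branch points and $\infty$ form a Zariski-open subset whose complement maps into a lower-dimensional locus of the Jacobian. To close the gap you would need to actually carry out the decomposition $2\sigma=\sum_I\nu_I+\sum_a t_a\omega_a$ (or an equivalent), at which point density follows from classical Riemann surface theory.
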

\begin{proof}
Fix $p \in M$.  Letting the fibre coordinates vary 
\begin{align}
\varpi = \sigma + \sum_{I=1}^{n}\frac{dx}{2(x-q_I)} = \bigg( \sum_{I=1}^{n}\frac{ y + p_I}{x-q_I} + R(x)\bigg)\frac{dx}{2y}
\end{align}
is the general one-form with simple poles with residue $1$ at the $n$ points $(q_I, p_I)$,  residue $-n$ at $\infty$,  and no other poles.  Therefore,  the functions $\theta^{a}$ are defined up to addition by $2\pi ik$ for $k \in \mathbb{Z}$.  There are unique $t_{a} \in \mathbb{C}$ for $a = 1,...,n$ such that
\begin{align}
\varpi = \sum_{I=1}^n \nu_I + \sum_{a=1}^n t_a \omega_a.
\end{align}
Here $\nu_I$ and $\omega_a$ are normalised meromorphic $1$-forms as in \cite{Bo2}:  The normalisation condition is that $\nu_I $ has simple poles with residues $\pm 1$ at points $(q_I,p_I)$ and $\infty$,  respectively,  and we have chosen a set of representative $A$-cycles for which the periods vanish.  The $\omega_{a}$ are holomorphic,  normalised so 
\begin{align}
\oint_{A_{b}} \omega_{a} = 2\pi i \delta_{ab}.  
\end{align}
  We have the following identities (\cite{Bo2} Section 4.3),  for any set of representative cycles:
\begin{align}
\oint_{A_{b}} \sum_{I=1}^n \nu_I &= 0 \ \,  \operatorname{mod} 2\pi i, \\
\oint_{B_{b}} \sum_{I=1}^n  \nu_I &= \sum_{I=1}^{n}  \int_{\infty}^{(q_I,p_I)} \omega_b  \ \,  \operatorname{mod} 2\pi i.  
\end{align}
We now calculate the map $\Theta$.  We have,  for $a = 1,...,n$
\begin{align}
-\theta^a = \oint_{A_a} \varpi = 2\pi i t_a \, \  \operatorname{mod} 2\pi i.  
\end{align}
Then
\begin{align}
-\theta^{a+n} = \oint_{B_{a}} \varpi = \sum_{b=1}^{n}  B_{ab}t_b +  \sum_{I=1}^{n} \int_{\infty}^{(q_I,p_I)} \omega_a   \ \,  \operatorname{mod} 2 \pi i
\end{align}
where $B_{ab}$ is the matrix of $B$-periods for the basis of holomorphic $1$-forms $\{\omega_{a}\}_{a=1}^{n}$.  Now,  the second term on the right-hand side is precisely the $a$th components of the Abel-Jacobi map on the divisor depending on $\xi \in X_p$
\begin{align}\label{q_divisors}
D(\xi) = \sum_{I=1}^n ((q_I,p_I) - \infty).  
\end{align}

We now need some standard facts about the Abel-Jacobi map and the reduced representations of divisors.  Comprehensive references are \cite{ACGH, Mu}.  
The Abel-Jacobi map is a biholomorphism from the group $\operatorname{Pic}_0(\Sigma_0(p))$ of degree-zero divisors modulo principal divisors to the Jacobian variety $J(\Sigma_0(p)) = \mathbb{C}^{n} / \Gamma$ where $\Gamma$ is the integer lattice in $\mathbb{C}^{n}$ generated by the $2n$-vectors $e_a,  \sum_b B_{ab}e_b \in \mathbb{C}^n$ from $a = 1,...,n$.  The Abel-Jacobi map is surjective when restricted to classes represented by divisors $\sum_{I=1}^n (r_I - \infty)$ for points $r_I \in \Sigma_0(p)$ not necessarily distinct.
Recall the conditions on the points $(q_I,p_I)$ was that the $q_I$ were distinct and away from the branch points.  This gives an open dense subset of $\text{Pic}_0(\Sigma_0(p))$.  Accordingly,  $\Theta$ has image an open dense subset of a torus bundle $\mathbb{T} \to M$.  The differential of the Abel-Jacobi map has corank that is the dimension of the space of holomorphic $1$-forms vanishing at the points $(q_I,p_I)$,  which may be calculated as,  using the Riemann-Roch theorem,  $h^0(\Sigma_0(p),\mathcal{O}(\tilde{D}(\xi))) - 1$ where $\tilde{D}(\xi) = \sum_{I=1}^{n} (q_I,p_I)$.  Now we use the standard fact that a divisor of this form consisting of distinct points is special if and only if it contains points interchanged by the covering involution.  So $\tilde{D}(\xi) $ is not special and $h^0(\Sigma_0(p),\mathcal{O}(\tilde{D}(\xi))) = 1$.  It follows that  $\Theta$ is an immersion.  
\end{proof}
The preimage of a point in $\mathbb{T}$ consists of the points in $X$ corresponding to the action of the symmetric group permuting the points $q_I$.  Since the $2$-form $\Omega$ and distribution $L(\hbar)$ are preserved under this action,  we may push them forward to obtain well-defined data on $\mathbb{T}$.  We will abuse notation and use the same symbols to denote this data.  \par
\begin{rem}
The construction of the map $\Theta$ for the case of $\bar{m} = \{7\}$ is given in \cite{BM}.  There,  points in $X$ are interpreted as specifying holomorphic line bundles $\mathcal{O}_{\Sigma_0(p)}(D(\xi))$ together with a connection defined on the bundles by the $1$-form $\varpi$.  The holonomy of the connection is given by the periods of $\varpi$.  
\end{rem} 
\begin{prop}\label{Involution2}
The map $\hat{\iota}: X \to X$ induces $\iota: \mathbb{T} \to \mathbb{T}$ given by 
\begin{align}
(z,\theta) \mapsto (z,-\theta).  
\end{align}
\end{prop}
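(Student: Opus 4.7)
The plan is to use the explicit formula for $\Theta$ in terms of $\varpi = \sigma + \sum_{I=1}^{n}\frac{dx}{2(x-q_I)}$ together with the definition $\theta_a = \oint_{\gamma_a}\varpi \pmod{2\pi i}$. Since $\tilde{\iota}$ acts trivially on $\{a_i^{(\alpha)}, w_\alpha\}$, it fixes the base point $p \in M$, and hence the spectral curve $\Sigma_0(p)$ and the canonical cycles $\gamma_a$. Therefore $z_a$ is invariant, and it only remains to track the effect on $\theta_a$.

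The first step is to show that $\sigma \mapsto -\sigma$ under $\tilde{\iota}$. Writing $\sigma = \frac{Q_1(x)}{2y_0}\,dx$ and inspecting $Q_1(x) = \sum_{I}\frac{p_I}{x-q_I} + R(x)$ together with the explicit formula (\ref{Rfrac}), every term in the numerators is manifestly linear in the collection $\{p_I\}$ (namely $2p_Iv_I$ or $p_K/(q_I-q_K)$), so $Q_1 \mapsto -Q_1$ under $p_I \mapsto -p_I$. Since $y_0$ depends only on the projection to $M$, we conclude $\sigma \mapsto -\sigma$. The remaining summand $\sum_I\frac{dx}{2(x-q_I)}$ of $\varpi$ depends only on $\{q_I\}$ and is therefore invariant. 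Combining these:
\begin{equation*}
\tilde{\iota}^*\varpi = -\sigma + \sum_{I=1}^n \frac{dx}{2(x-q_I)} = -\varpi + \sum_{I=1}^n\frac{dx}{x-q_I}.
\end{equation*}

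The second step is to integrate over $\gamma_a$, giving $\theta_a\circ\tilde{\iota} = -\theta_a + \sum_I\oint_{\gamma_a}\frac{dx}{x-q_I}$. I would then argue that each extra integral lies in $2\pi i\,\mathbb{Z}$: since $q_I \notin \mathcal{B}(p)$, the preimage of $q_I$ under $\Pi: \Sigma_0(p) \to \mathbb{CP}^1$ consists of two distinct smooth points, and the pullback of $\frac{dx}{x-q_I}$ is a meromorphic $1$-form on $\Sigma_0(p)$ with simple poles of residue $1$ at precisely those two points and no others. By the residue theorem any period of such a form is $2\pi i$ times an integer. Hence $\theta_a\circ\tilde{\iota} \equiv -\theta_a \pmod{2\pi i}$, and the induced map on $\mathbb{T} = TM/\mathcal{G}$ is $(z,\theta)\mapsto(z,-\theta)$ as claimed.

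The only subtle point — and the main obstacle if one is not careful — is that $\varpi$ itself is \emph{not} odd under $\tilde{\iota}$; the discrepancy is a sum of logarithmic differentials $\sum_I\frac{dx}{x-q_I}$ that would spoil the identity on the universal cover of the fibres. The punchline is that precisely this discrepancy has integer-multiple-of-$2\pi i$ periods, so it is absorbed by the identification $\theta_a \sim \theta_a + 2\pi i$ defining $\mathbb{T}$. This is exactly why $\varpi$ was set up with those particular simple-pole residues at $q_I$ in the preceding proposition.
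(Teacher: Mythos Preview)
Your approach is essentially the same as the paper's: both compute $\theta_a\circ\tilde\iota$ by using $\sigma\mapsto-\sigma$ and then arguing that the periods of the leftover logarithmic piece $\sum_I\frac{dx}{x-q_I}$ (respectively $\sum_I\frac{dx}{2(x-q_I)}$ in the paper) lie in $2\pi i\,\mathbb{Z}$ (respectively $\pi i\,\mathbb{Z}$). The organisation differs only cosmetically.

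There is, however, a genuine gap in your justification of the key step. You claim that the pullback of $\frac{dx}{x-q_I}$ has simple poles only at the two preimages of $q_I$; in fact it also has a pole at the branch point over $x=\infty$ (with residue $-2$). More importantly, the sentence ``By the residue theorem any period of such a form is $2\pi i$ times an integer'' is false as stated: on a curve of positive genus a meromorphic $1$-form with integer residues need \emph{not} have all its $A$- and $B$-periods in $2\pi i\,\mathbb{Z}$ (think of normalised differentials of the third kind, whose $B$-periods are generically irrational multiples of $2\pi i$). The residue theorem only controls integrals over \emph{boundaries}, not over arbitrary homology classes.

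What makes the argument work here --- and what the paper states explicitly --- is that $\frac{dx}{x-q_I}$ is the \emph{pullback} of a form on $\mathbb{CP}^1$. Since $\mathbb{CP}^1$ is simply connected, $H_1(\mathbb{CP}^1\setminus\{q_I,\infty\},\mathbb{Z})$ is generated by small loops around the punctures, so every period of $\frac{dx}{x-q_I}$ on $\mathbb{CP}^1$ is $2\pi i$ times an integer combination of the residues $+1,-1$. The period over $\gamma_a\subset\Sigma_0(p)$ equals the period over $\Pi_*\gamma_a\subset\mathbb{CP}^1$, hence lies in $2\pi i\,\mathbb{Z}$. Once you insert this reason in place of the bare appeal to the residue theorem, your proof is complete and matches the paper's.
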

\begin{proof}
Under $(a,q,p,v) \mapsto (a,q,-p,v)$ we have $\sigma \mapsto -\sigma$.  The differential form 
\begin{align}
\sum_{I=1}^{n}\frac{dx}{2(x-q_I)}
\end{align}
has residues $\pm 1/2$ and is a pullback of a $1$-form on $\mathbb{CP}^1$ so has periods that are multiples of $\pi i$.  So given a fixed class $\gamma_{a}$ we have
\begin{align}
&\theta^{a} \circ \hat{\iota}  = \pi i k + \oint_{\gamma_{a}} \sigma  = -\bigg(\pi i k - \oint_{\gamma_{a}} \sigma \bigg) + 2\pi i k = -\theta^{a} + 2 \pi i k 
\end{align} 
for some $k \in \mathbb{Z}$.  
\end{proof}
\begin{prop}\label{Euler2}
In the period coordinates $\{z^a,\theta^a\}_{a=1}^{2n}$ the push-forward of $E$ is given by
\begin{align}
E = \sum_{a=1}^{2n} z^a\frac{\partial}{\partial z^a}. 
\end{align}
\end{prop}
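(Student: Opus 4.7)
The plan is to verify $E(z_a)=z_a$ and $E(\theta_a)=0$ for each $a=1,\dots,2n$; since $\{z_a,\theta_a\}$ are flat coordinates for $\nabla$ on $\mathbb{T}$, these identities uniquely determine the push-forward. Both follow from the homogeneity relation
\[
\Bigl(E+\frac{2}{2m_\infty-3}x\partial_x+\hbar\partial_\hbar\Bigr)Q(x)=-\frac{4}{2m_\infty-3}Q(x)
\]
derived in the proof of Proposition \ref{Euler1}. Extracting the coefficients of $\hbar^{-2}$ and $\hbar^{-1}$ yields
\[
\Bigl(E+\tfrac{2}{2m_\infty-3}x\partial_x\Bigr)Q_0=\tfrac{4m_\infty-10}{2m_\infty-3}Q_0,\qquad \Bigl(E+\tfrac{2}{2m_\infty-3}x\partial_x\Bigr)Q_1=\tfrac{2m_\infty-7}{2m_\infty-3}Q_1,
\]
and the first identity, combined with $y_0^2=Q_0$, gives $E(y_0)=\tfrac{2m_\infty-5}{2m_\infty-3}y_0-\tfrac{2}{2m_\infty-3}x\partial_x y_0$.

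For the $z$-coordinates, I use $x\partial_x y_0\,dx=d(xy_0)-y_0\,dx$ to obtain
\[
E(\psi)=\psi-\frac{2}{2m_\infty-3}\,d(xy_0).
\]
The exact term contributes nothing to the period around $\gamma_a$, and the argument of \S\ref{M_defn} that $E(z_a)=\oint_{\gamma_a}E(\psi)$ (Gauss--Manin parallelism of the basis cycles) gives $E(z_a)=z_a$.

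For the $\theta$-coordinates, write $\varpi=\sigma+\sum_I dx/(2(x-q_I))$. Since $E(q_I)=\tfrac{2q_I}{2m_\infty-3}$, a direct computation gives $E\bigl(dx/(2(x-q_I))\bigr)=-d\bigl(q_I/((2m_\infty-3)(x-q_I))\bigr)$, which is exact off $q_I$. For the $\sigma$ piece, differentiating $\sigma=Q_1\,dx/(2y_0)$ and substituting the expressions for $E(Q_1)$ and $E(y_0)$ above, an integration by parts analogous to the one for $\psi$ produces the cancellation
\[
E(\sigma)=-\frac{2}{2m_\infty-3}\,d\!\left(\frac{xQ_1}{2y_0}\right),
\]
which is exact off the finitely many poles of $xQ_1/(2y_0)$ on $\Sigma_0(p)$. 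Representing the $\gamma_a$ by cycles avoiding these poles, $E(\theta_a)=\oint_{\gamma_a}E(\varpi)=0$.

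The main obstacle is the cancellation producing the exactness of $E(\sigma)$: one must check that the homogeneity weights of $Q_1$ and $y_0$ conspire precisely so that all non-exact contributions vanish, in direct parallel with the cleaner calculation for $\psi$. Once this identity is in hand, combining $E(z_a)=z_a$ with $E(\theta_a)=0$ gives $E=\sum_a z_a\,\partial/\partial z_a$ in period coordinates, as claimed.
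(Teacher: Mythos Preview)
Your proposal is correct and follows essentially the same route as the paper: extract the homogeneity relations for $Q_0$ and $Q_1$ from the master identity for $Q$, deduce $E(\psi)=\psi+\text{(exact)}$ to get $E(z_a)=z_a$, and show $E(\sigma)$ is exact to get $E(\theta_a)=0$. Your explicit treatment of the extra piece $\sum_I dx/(2(x-q_I))$ in $\varpi$ is a small addition the paper omits (it simply writes $E(\theta_a)=\oint_{\gamma_a}E(\sigma)$, tacitly using that the periods of $\varpi-\sigma$ are locally constant integer multiples of $\pi i$), but otherwise the arguments coincide.
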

\begin{proof}
We have the following equalities
\begin{align}
E(Q_0(x)) + \frac{2}{2m_\infty-3}x\frac{\partial}{\partial x}Q_0(x) = \frac{4m_\infty-10}{2m_\infty-3}Q_0(x), \\
E(Q_1(x)) + \frac{2}{2m_\infty-3}x\frac{\partial}{\partial x}Q_1(x) = \frac{2m_\infty-7}{2m_\infty-3}Q_1(x). \label{Q1homogeneity}
\end{align}
The first equality can be rewritten
\begin{align}
E(y_0)dx = \frac{2m_\infty-5}{2m_\infty -3} \, y_0 dx - \frac{1}{2m_\infty-3} \frac{xQ'_0(x)}{y_0} \, dx. 
\end{align}
Integrating the last term by parts yields $[E(\psi)] = [\psi] \in H^1(\Sigma_0(p),\mathbb{C})$.  In particular,   
\begin{align}
E(z^a) = \int_{\gamma_{a}} E(\psi) = z^a
\end{align}
for each $a = 1,...,2n$ so 
\begin{align}
E = \sum_{a=1}^{2n} z^a\frac{\partial}{\partial z^a} + E_V,  
\end{align}
where $E_V$ is a vertical vector field.  Next,  we use (\ref{Q1homogeneity}) which yields
\begin{align}
E(\sigma) &= \frac{E(Q_1(x))}{2y_0}dx - \frac{Q_1(x)E(y_0)}{2Q_0(x)}dx \nonumber \\ &= \Bigg(-\frac{2}{2m_\infty-3}\frac{Q_1(x)}{2y_0} - \frac{2x}{2m_\infty-3}\frac{Q_1'(x)}{2y_0} + \frac{x}{2m_\infty-3}\frac{Q_1(x)Q_0'(x)}{2Q_0^{3/2}}\Bigg)dx \nonumber \\
&= -\frac{2}{2m_\infty-3}d\Bigg(x\frac{Q_1(x)}{2y_0}\Bigg). 
\end{align}
 In particular,   
\begin{align}
E(\theta^a) = -\int_{\gamma_{a}} E(\sigma) = 0
\end{align}
for each $a = 1,...,2n$ so $E_V = 0$.  
\end{proof}
\begin{prop}\label{Omega_I_period}
In the period coordinates $\{z^a,\theta^a\}_{a=1}^{2n}$ the push-forward of the $2$-form $\Omega_I$ is given by
\begin{align}
i\Omega_I = -\sum_{a=1}^{2n} \sum_{b=1}^{2n} \omega_{ab}dz^{a} \wedge d\theta^{b}.  
\end{align}
\end{prop}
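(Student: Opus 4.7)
The approach is to evaluate the claimed identity on arbitrary pairs of tangent vectors $U,V \in T_\xi X$, starting from the intersection-pairing expression \eqref{OmegaI_intersection_formula} for $i\Omega_I$, and then converting the resulting intersection numbers to period derivatives via Riemann's bilinear relations, exactly as in the proof that $\{z_a\}$ are local coordinates on $M$. Concretely, expanding $U(\psi)$ and $V(\sigma)$ (and similarly the swapped pair) in the basis $\{\nu_a\}$ dual to the canonical cycles $\{\gamma_a\}$, and using $\omega_{ab} = \langle \nu_a \mid \nu_b \rangle_0$, each pairing becomes $\sum_{a,b}\omega_{ab}\bigl(\oint_{\gamma_a}\cdot\bigr)\bigl(\oint_{\gamma_b}\cdot\bigr)$.

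The next step is to identify these period integrals with coordinate derivatives. For $\psi$ this is immediate, since the cycles $\gamma_a$ are parallel for the Gauss-Manin connection: $\oint_{\gamma_a} U(\psi) = U\bigl(\oint_{\gamma_a}\psi\bigr) = U(z_a)$. For $\sigma$ I relate it to $\varpi = \sigma + \sum_I \frac{dx}{2(x-q_I)}$ appearing in the definition of $\theta_a$. The variation of the correction piece,
\begin{align*}
U\!\left(\sum_{I=1}^n \frac{dx}{2(x-q_I)}\right) = \sum_{I=1}^n U(q_I)\,\frac{dx}{2(x-q_I)^2} = -\sum_{I=1}^n U(q_I)\,d\!\left(\frac{1}{2(x-q_I)}\right),
\end{align*}
is a sum of exact forms on $\Sigma_0(p)\setminus\{(q_I,\pm p_I)\}$ with single-valued antiderivatives, so integrates to zero around any cycle disjoint from those points. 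Hence $U(\theta_a) = \oint_{\gamma_a} U(\varpi) = \oint_{\gamma_a}U(\sigma)$, and substituting yields
\begin{align*}
i\Omega_I(U,V) = \sum_{a,b}\omega_{ab}\bigl[U(z_a)V(\theta_b) + U(\theta_a)V(z_b)\bigr].
\end{align*}

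Finally, using antisymmetry $\omega_{ab} = -\omega_{ba}$ and relabeling indices turns the second sum into $-\sum_{a,b}\omega_{ab}U(\theta_b)V(z_a)$, so the total equals $\sum_{a,b}\omega_{ab}(dz^a\wedge d\theta^b)(U,V)$, as claimed. The principal technical obstacle is the careful treatment of the $\frac{dx}{2(x-q_I)}$ correction in $\varpi$: one must verify that the exactness of its variation really contributes nothing over the full cycle $\gamma_a$ (including accounting for the $\mathrm{mod}\ 2\pi i$ ambiguity in $\theta_a$), and that $U(\sigma)$ is a genuine class in $H^1(\Sigma_0(p),\mathbb{C})$. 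The latter point is inherited from the fact that the residues $\pm 1/2$ of $\sigma$ at $(q_I,\pm p_I)$ are constant over $X$, so $U(\sigma)$ has vanishing residues; with this in hand the rest of the argument is bookkeeping on top of the intersection/residue formulas already established.
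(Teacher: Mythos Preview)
Your proof is correct and follows essentially the same route as the paper: start from the intersection-pairing formula \eqref{OmegaI_intersection_formula}, apply Riemann's bilinear relations to convert pairings into period integrals, and identify those integrals with $U(z_a)$ and $U(\theta_a)$. The paper compresses the last step into the assertion $d\theta_a = d\bigl(\oint_{\gamma_a}\sigma\bigr)$, relying implicitly on the earlier observation (proof of Proposition~\ref{Involution2}) that the correction $\sum_I \frac{dx}{2(x-q_I)}$ has periods which are locally constant multiples of $\pi i$; your explicit verification that its variation is exact is simply a direct proof of the same fact.
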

\begin{proof}
This follows from the formula (\ref{OmegaI_intersection_formula}) together with 
\begin{align}
dz^{a} = \bigg(\oint_{\gamma_{a}} \psi \bigg) ,  \quad d\theta^{a} = d\bigg(-\oint_{\gamma_{a}} \sigma \bigg) 
\end{align}
and the Riemann bilinear relations.  
\end{proof}
Finally we are able to conclude:
\begin{thm}[Joyce structure on $M$]
The complex manifold $M = \operatorname{Quad}(\bar{m})$ carries a Joyce structure compatible with the standard period structure,  which is possibly singular on the complement of an open dense subset of $TM$.  
\end{thm}
\begin{proof}
We can pull $\Omega$ (constituting the hyper-K\"ahler structure) back by the map $TM \to \mathbb{T}$ given by taking the quotient by the lattice.  We will see this gives a Joyce structure after we check the conditions (1) to (6) in Definition \ref{J}.  \par 
The condition (1) is just the statement the formula (\ref{Omega0_formula}) holds and $\Omega_{-}$ is compatible with $\omega$.  Now,  given an isomonodromic flow $L = U + V/\hbar$,  from the definition (\ref{quaternions}) of the quaternionic structure we have $(J-iK)(V) = 0$ and $(J-iK)(U) = 2V$.  If we let $U_a$ be the unique isomonodromic flow that pushes down to the $z_{a}$ coordinate vector field (we can do this since the isomonodromic flows span a distribution transverse to the vertical bundle for all $\hbar \in \mathbb{C}^*$) then by Proposition \ref{Omega_I_period} 
\begin{align}
\Omega\bigg(\hspace{-1mm}U_a+\frac{V_a}{\hbar},\cdot\hspace{-1mm}\bigg) = \sum_{b=1}^{2n} \bigg(  \frac{\omega_{ab}dz^{b}}{\hbar^2} - \frac{\omega_{ab}d\theta^{b} - \omega_{cb}d\theta^{b}\big(U_a+\frac{V_a}{\hbar}\big)dz^{c}}{\hbar} \bigg) + O(\hbar^0) = 0
\end{align}
so that from the $\hbar^{-2}$ term
\begin{align}
V_a = \frac{\partial}{\partial \theta^{a}}
\end{align}
and hence
\begin{align}
(J-iK)/2 = \sum_{a=1}^{n}  \frac{\partial}{\partial \theta^{a}} \otimes dz^a = \nu
\end{align}
as required for (2).  For (3) we have from (\ref{quaternions}),  $I(U_a) = -iU_a$ so that
\begin{align}
d\pi(U_a) = \frac{\partial}{\partial z^{a}} = i(d\pi(I(U_a))).
\end{align} 
Condition (4) is shown by Proposition \ref{Involution1} and Proposition \ref{Involution2} while Condition (5) follows from Proposition \ref{Euler1} and Proposition \ref{Euler2}.  
Condition (6) is immediate from the fact the hyper-K\"ahler structure is defined on $\mathbb{T}$ so is periodic in $\theta_{a}$ when pulled back to $TM$.  
\end{proof}

\section{Four simple poles and Painlev\'e VI} \label{PVI}
In this section we compute the hyper-K\"ahler metric arising from an example of interest due to its similarity to the isomonodromy problem that leads to the Painlev\'e VI equation \cite{O}.  The moduli space of quadratic differentials we consider here $M = \text{Quad}(\{1,1,1,1\})$,  falls outside those analysed in \S \ref{M_defn},  as we coordinatise $M$ in a slightly modified fashion.  The calculations proceed with only minor adjustments and we therefore omit many details.  \par 
Given a quadratic differential on $\mathbb{CP}^1$ with four simple poles,  we may move three of the poles to the points $\infty,  0,  1$ via M\"obius transformation.  It then takes the form $Q_0(x)dx^2$ where
\begin{align}
Q_0(x) = \frac{a_0}{x} + \frac{a_1}{x-1} +\frac{a}{x-w}. 
\end{align}
The condition that the pole at $x = \infty$ is simple forces $a_0 + a_1 + a = 0$ and $a_1 + aw = 0$ so
\begin{align}\label{PVIQ0}
Q_0(x) = \frac{a(w-1)}{x} - \frac{aw}{x-1} +\frac{a}{x-w}.  
\end{align}
 We may then use $(a,w)$ as local coordinates on $M = \operatorname{Quad}(\{1,1,1,1\})$.  The condition that the zeroes are simple and therefore that $y_0^2 = Q_0(x)$  define a non-singular elliptic curve is equivalent in this case to there being exactly four distinct poles.  That is
 \begin{align}
 aw(w-1) \ne 0.  
 \end{align}
 The earlier arguments of \S \ref{M_defn} through \S \ref{Joyce_finalisation} need to be modified slightly to compute the Joyce structure.  The map $\mu_0$ is defined by differentiating the tautological section with the Gauss-Manin connection as before.  By looking at the order of the poles,  one may see that only residues at $x = w$ will contribute to $\omega$,  the $2$-form induced by the intersection pairings on the elliptic curves.  
 We have the Puiseux expansion

\begin{align}
 y_0 &= a^{1/2}(x-w)^{-1/2} +  O((x-w)^{1/2})
 \end{align}
 so that
 \begin{align}
 \omega = \frac{1}{2}dw \wedge da.  
 \end{align}
and $\mu_0$ is an isomorphism.  \par
The deformation term is given by 
\begin{align}\label{PVIQ1}
Q_1(x) \coloneqq \frac{p}{x-q} + R(x),  
\end{align}
where 
\begin{align}
R(x) = \frac{p(q-1)+r(w-1)}{x} - \frac{pq+rw}{x-1} + \frac{r}{x-w}
\end{align}
in order that $\sigma$ defined by (\ref{sigma}) is the general meromorphic $1$-form with simple poles with residue $\pm 1/2$ at the pair of points corresponding to $x = q$ and no other poles.  
We then choose $Q_2(x)$ uniquely so that $Q(x)$ satisfies the apparent singularity condition (\ref{apparent}) and has simple poles at the four poles of $Q_0(x)$.  \par 
As before we will find it convenient to work with a parameter $v$ instead of $r$ so that $u = p/\hbar + v$ in which case
\begin{align}
R(x) = \frac{-px^2+p((1+w)-q)x + 2pq^2 - pq(1+w)+ 2pvq(q-1)(q-w)}{x(x-1)(x-w)}
\end{align}
so $R(q) = 2pv$ and then 
\begin{align}\label{PVIQ2}
Q_2(x) = \frac{3}{4(x-q)} + \frac{v}{(x-q)} + S(x),
\end{align}
where
\begin{align}
S(x) = \frac{-vx^2 + (v(1+w-q)-\frac{3}{4})x + v^2q(q-1)(q-w) + q(v(2q -w - 1) + \frac{3}{4})}{x(x-1)(x-w)}
\end{align}
so $S(q) = v^2$.  \par
Given this potential,  there are two linearly independent isomonodromic flows for (\ref{schro}).  The vector field
\begin{align}
U &= \nonumber \frac{\partial}{\partial a} - \frac{1}{\hbar} \frac{\partial p}{\partial a}\frac{\partial}{\partial v}  \\
&= \frac{\partial}{\partial a}  - \frac{1}{2\hbar p}  \bigg(\frac{w-1}{q} - \frac{w}{q-1} + \frac{1}{q-w} \bigg) \frac{\partial}{\partial v}  
\end{align}
generates a family of deformations which preserves the potential $Q(x)$.  To calculate the other linearly independent flow we need to make a slight modification to our earlier arguments.  
\begin{lemma}\label{PVIansatz}
Any meromorphic function $A(x,t)$ defined for all $x \in \mathbb{CP}^1$ satisfying \rm{(\ref{fuchs})} with $Q(x) = \hbar^{-2}Q_0(x) + \hbar^{-1}Q_1(x) + Q_2(x)$ specified by (\ref{PVIQ0}),  (\ref{PVIQ1}),  (\ref{PVIQ2}) is given by
\begin{align}\label{A_PVIansatz}
A(x,t) = \frac{f(t)x(x-1)}{x-q(t)}
\end{align}
for some meromorphic function $f(t)$.  
\end{lemma}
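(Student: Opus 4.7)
The plan is to adapt the proof of Lemma \ref{ansatz} to the Painlev\'e VI potential. Most steps go through unchanged or with only the $m_\alpha = 1$ case of that proof; the new ingredients are a fresh analysis at infinity, since the earlier argument used $m_\infty \geq 3$, and two extra matching conditions at the fixed poles $x = 0$ and $x = 1$ that pin down the one-parameter family.

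First, I would run the preliminary arguments of Lemma \ref{ansatz}: since $A$ is rational on $\mathbb{CP}^1$, matching pole orders across \eqref{fuchs} forces the poles of $A$ to lie among those of $Q$. At the apparent singularity $x = q$, the same cubic $-M(M+1)(M+2) + 3M + 3 = 0$ yields $M \leq 1$, so $A$ has at most a simple pole there. At each finite simple pole $w_\alpha \in \{0,1,w\}$ of $Q_0$, the $m_\alpha = 1$ branch of Lemma \ref{ansatz} applies verbatim and rules out any pole of $A$.

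The new content is at infinity. A direct computation shows the `four simple poles' condition forces $Q_0(x)$, $Q_1(x)$, $Q_2(x)$ each to be $O(x^{-3})$ at infinity, so $Q(x) = b(t)/x^3 + O(x^{-4})$ with $b \neq 0$ generically. Writing $A(x) = A_{M_\infty} x^{M_\infty} + \cdots$ and comparing leading orders in \eqref{fuchs}: for $M_\infty \geq 3$ the term $\partial^3 A/\partial x^3$ contributes a nonzero leading coefficient $M_\infty(M_\infty - 1)(M_\infty - 2) A_{M_\infty}$ at order $x^{M_\infty - 3}$, which exceeds the $O(x^{-3})$ decay of the left-hand side and gives a contradiction; for $M_\infty = 2$ the third derivative vanishes at leading order, but the sum of leading terms from $-4Q\,\partial A/\partial x$ and $-2(\partial Q/\partial x) A$ at order $x^{-2}$ reduces to $-2b A_{M_\infty}$, which again forces $A_{M_\infty} = 0$. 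Hence $M_\infty \leq 1$, and combining everything,
\begin{equation*}
A(x,t) = c_0(t) + c_1(t)\, x + \frac{c_2(t)}{x-q(t)}.
\end{equation*}

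Finally, two linear relations among $(c_0, c_1, c_2)$ come from matching near the \emph{fixed} poles $x = 0$ and $x = 1$. Since their locations do not depend on $t$, the left-hand side $-2\,\partial Q/\partial t$ has at most a simple pole at each. On the right-hand side, writing $A$ near $x = 0$ as $A(0) + A'(0)\, x + \cdots$, the term $-2(\partial Q/\partial x) A$ contributes $2 q_0 A(0)/x^2$, where $q_0$ is the residue of $Q$ at $0$. Matching the $x^{-2}$ coefficients and using $q_0 \neq 0$ forces $A(0) = 0$, and the analogous argument at $x = 1$ gives $A(1) = 0$. These two conditions reduce the three-parameter family to one parameter, which a short calculation identifies with $f(t)\, x(x-1)/(x - q(t))$ upon setting $f = c_1$. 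The main obstacle is the $M_\infty = 2$ substep of the infinity analysis, where the nontrivial cancellation between the two leading contributions must be carried out explicitly; at the moving pole $x = w$ the analogous matching does not further constrain $A$ but instead determines $\dot{w}$ in terms of $A(w)$.
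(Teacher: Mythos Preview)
Your proof is correct and follows essentially the same approach as the paper's: both invoke the $m_\alpha = 1$ branch of Lemma~\ref{ansatz} to rule out poles of $A$ at $0,1,w$ and bound the pole at $q$ to be simple, carry out a separate order analysis at infinity to obtain $M_\infty \le 1$ (first excluding $M_\infty \ge 3$ via the dominant $\partial^3 A/\partial x^3$ term, then killing $M_\infty = 2$ via the $-2bA_2/x^2$ contribution from the $Q$-terms), and finally use the $x^{-2}$ matching at the fixed poles $x=0,1$ to force $A(0)=A(1)=0$. The only difference is cosmetic ordering---the paper does the $x=0,1$ vanishing before the infinity analysis, you do it after---and your computations are slightly more explicit.
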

\begin{proof}
We will suppress the dependence of functions on $t$ in our notation.  
The same arguments as Lemma \ref{ansatz} imply that $A$ has at most a simple pole at $x = q$ and no poles at $x = 0,1,w$ or anywhere else in $\mathbb{C}$.  Suppose $A(x) = A^{(0)}_0 + A^{(0)}_1x + O(x^2)$ at $x = 0$ then since the pole is fixed the left-hand side (\ref{fuchs}) has a vanishing $x^{-2}$ term.  Setting the $x^{-2}$ term of the right-hand side to zero yields $A^{(0)}_0 = 0$.  This implies $A$ has a zero at $x = 0$.  Similarly,  $A$ has a zero at $x = 1$.  \par
Next,  we consider the behaviour at $x = \infty$.  Note that $Q$ has a zero of order three at infinity.  Write $A(x) = A_{M_\infty}^{(\infty)}x^{M_\infty} + O(x^{M_\infty-1})$.  The leading orders of the terms in (\ref{fuchs}) are at most the integers indicated below 
\begin{align}\label{Ainfty}
-2 \hspace{-2mm}\underbrace{\frac{\partial Q}{\partial t}}_{-3} = \underbrace{\frac{\partial^3 A}{\partial x^3}}_{M_\infty -3} -  \hspace{1mm} 4\underbrace{Q\frac{\partial A}{\partial x} - 2\frac{\partial Q}{\partial x}A}_{M_\infty - 4}.  
\end{align}
In particular,  we can rule out $M_\infty \ge 3$ so that,  the first term on the right-hand side of (\ref{Ainfty}) will have order at most $-4$.  Then,  for $M_\infty = 2$ the last two terms are the leading order terms and their Laurent expansion taken together gives $A_{2}^{(\infty)} = 0$ 
so $M_\infty \le 1$.  In particular,  $A$ has at most a simple pole at $x = \infty$.  All these facts together imply $A$ takes the form (\ref{A_PVIansatz}) up to scale.  
\end{proof}
Similar calculations as in \S \ref{flows_calculation} then imply that the unique flow satisfying (\ref{fuchs}) with $A$ as above and $\dot{a} = 0$ is generated by (up to scale)
\begin{align}\label{PVIgenerator}
&V = \frac{\partial}{\partial w} + \frac{1}{A(w)}\Bigg(\kappa\frac{\partial}{\partial q} + \Big(q(q-1)S'(q)-v\Big)\frac{\partial}{\partial v}\Bigg) \\ \nonumber &+ \frac{1}{\hbar A(w)}\Bigg(2pq(q-1)\frac{\partial}{\partial q} + \bigg(q(q-1)R'(q) - p - \frac{Q_0'(q)\kappa}{2p}-A(w)\frac{\partial p}{\partial w}\bigg)\frac{\partial}{\partial v} \Bigg),  
\end{align}
where $\kappa \coloneqq 2vq(q-1) + 2q - 1$.  \par 
To write down the metric we take the unique metric in the conformal class (up to a constant scale) defined by the twistor distribution that satisfies $\Omega_{-} = \frac{1}{2}dw \wedge da$ (this is the unique metric in the conformal class compatible with the period structure in the sense condition (1) of Definition \ref{J} is satisfied).  This yields the hyper-K\"ahler metric
\begin{align}
g &= da \Bigg( \frac{\kappa}{2pq(q-1)}dw - \frac{A(w)}{2pq(q-1)}dq\Bigg) - \bigg(\frac{\partial p}{\partial a}\bigg)^{-1} dw \,  dv \nonumber  \\ +& \bigg(\frac{\partial p}{\partial a}\bigg)^{-1}\Bigg( \frac{q(q-1)S'(q)-v}{A(w)} - \frac{\kappa\big(q(q-1)R'(q)-p-\frac{Q_0'(q)\kappa}{2p}-A(w)\frac{\partial p}{\partial w}\big)}{2pq(q-1)A(w)}\Bigg)dw\,  dw \nonumber \\ &+ \bigg(\frac{\partial p}{\partial a}\bigg)^{-1} \Bigg(\frac{\big(q(q-1)R'(q)-p-\frac{Q_0'(q)\kappa}{2p} - A(w)\frac{\partial p}{\partial w}\big)}{{2pq(q-1)}} \Bigg)dw \, dq .
\end{align}
A computation with the MAPLE \texttt{DifferentialGeometry} package confirms that the Ricci tensor vanishes as required.  \par 
We consider the flow generated by the vector field $V$ above.  
Set $\dot{w} = 1$ so that
\begin{align}
\frac{dq}{dw} = \frac{1}{A(w)}\Bigg(\frac{2pq(q-1)}{\hbar} + 2vq(q-1) + 2q -1 \Bigg).  
\end{align}
Differentiating again,  we obtain,  after a lengthy calculation: 
\begin{prop}
The flow generated by the vector field $V$ given by \textup{(\ref{PVIgenerator})} is the Painlev\'e VI equation
\begin{align}
\nonumber \frac{d^2q}{dw^2} = &\frac{1}{2}\bigg( \frac{1}{q} + \frac{1}{q-1}+\frac{1}{q-w}\bigg)\bigg(\frac{dq}{dw}\bigg)^2 - \bigg(\frac{1}{w} + \frac{1}{w-1} + \frac{1}{q-w}\bigg)\frac{dq}{dw} \\ &+\frac{1}{2}\frac{q(q-1)(q-w)}{w^2(w-1)^2}\bigg(1-\frac{w}{q^2}+\frac{w-1}{(q-1)^2}\bigg).  
\end{align}
\end{prop}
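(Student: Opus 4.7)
The plan is to proceed by direct computation from the ODE system defined by the vector field $V$, imposing $\dot{w}=1$ so that $w$ serves as the independent variable. The first-order system reads
\begin{align*}
\frac{dq}{dw} &= \frac{1}{A(w)}\left(\frac{2pq(q-1)}{\hbar} + 2vq(q-1) + 2q-1\right), \\
\frac{dv}{dw} &= \frac{1}{A(w)}\Big(q(q-1)S'(q)-v\Big) + \frac{1}{\hbar A(w)}\Big(q(q-1)R'(q)-p-\tfrac{Q_0'(q)\kappa}{2p}-A(w)\tfrac{\partial p}{\partial w}\Big),
\end{align*}
where $A(w)=w(w-1)$ and $\kappa = 2vq(q-1)+2q-1$. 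The plan is to differentiate the first equation with respect to $w$, substitute the expression for $dv/dw$ from the second equation (and for $dp/dw$ obtained from the constraint $p^2=Q_0(q)$), and rearrange into the standard Painlevé VI form.

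The first step is to compute $\frac{dp}{dw}$ implicitly from $p^2 = Q_0(q)$: differentiating yields
\begin{equation*}
\frac{dp}{dw} = \frac{1}{2p}\left(\frac{\partial Q_0}{\partial q}\frac{dq}{dw} + \frac{\partial Q_0}{\partial w}\right),
\end{equation*}
so every occurrence of $dp/dw$ is expressible in terms of $q,v,p,w$ and $dq/dw$ itself. Next I differentiate $\frac{dq}{dw}$ treating $A(w)$, $p(q,w)$, $v(w)$, $q(w)$ as the relevant composite functions. The resulting expression for $\frac{d^2q}{dw^2}$ is a rational function on $X$ whose structure already exhibits the $\tfrac12(\tfrac1q+\tfrac1{q-1}+\tfrac1{q-w})(dq/dw)^2$ term: this comes from the $\hbar^{-2}$ piece that arises when $\partial p/\partial q = Q_0'(q)/(2p)$ is fed back into the squared leading coefficient $(2pq(q-1))^2/(\hbar A(w))^2$, since $Q_0'(q)/Q_0(q)$ is the logarithmic derivative $\tfrac1q-\tfrac1{q-1}+\tfrac{\textrm{etc.}}{q-w}$. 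The $-(\tfrac1w+\tfrac1{w-1}+\tfrac1{q-w})(dq/dw)$ coefficient should emerge from differentiating $A(w)^{-1}$ and from the mixed $\hbar^{-1}$ cross terms.

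The step I expect to be the main obstacle is the cancellation of all $\hbar$-dependent terms in the final expression for $\frac{d^2q}{dw^2}$: both $\hbar^{-1}$ and $\hbar^{-2}$ contributions appear on intermediate lines, and the Painlevé VI equation is manifestly $\hbar$-independent. The $\hbar^{-2}$ cancellation reduces to the algebraic identity $p^2 = Q_0(q) = a(w-1)/q - aw/(q-1) + a/(q-w)$, while the $\hbar^{-1}$ cancellation is the (nontrivial) reason why the flow is isomonodromic and amounts to the bulk of the ``lengthy calculation''; one should track these separately and verify each vanishes as a rational function of $(q,w,a)$. Once $\hbar$ has disappeared, the explicit rational function $\tfrac12 q(q-1)(q-w)w^{-2}(w-1)^{-2}(1-w/q^2+(w-1)/(q-1)^2)$ in the non-derivative term can be read off by collecting the contributions from $q(q-1)S'(q)$, $Q_0'(q)\kappa/(2p)\cdot(2pq(q-1))$ and the $A(w)\partial p/\partial w$ contribution, completing the identification with PVI. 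This final matching is a purely algebraic verification most efficiently carried out in a computer algebra system, as the author implicitly acknowledges.
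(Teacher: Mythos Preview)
Your proposal is correct and takes essentially the same approach as the paper: the paper simply sets $\dot w=1$, writes out $dq/dw$, and then says ``differentiating again, we obtain, after a lengthy calculation'' the Painlev\'e VI equation---you have spelled out the structure of that lengthy calculation (tracking the $\hbar$-cancellations via $p^2=Q_0(q)$, using the $dv/dw$ equation, etc.) in more detail than the paper does. One small slip to flag: $A(w)$ is not $w(w-1)$; from Lemma~\ref{PVIansatz} the ansatz is $A(x)=f\,x(x-1)/(x-q)$, so $A(w)=f\,w(w-1)/(w-q)$ with the scale $f$ fixed by the normalisation $\dot w=1$. This does not affect your strategy, only the bookkeeping when you actually carry out the computation.
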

Using the notation of \cite{F} this is the Painlev\'e VI equation with fixed parameters $(k_\infty,  k_0,  k_1, k_t) = (1,1,1,1)$.  There are explicit algebraic solutions (see \cite {EGH}) defined by 
\begin{align}
(q^2-w)(q^2-2q+w)(q^2-2qw-w) = 0.  
\end{align}
The Euler vector field is given by 
\begin{align}
E = 2a\frac{\partial}{\partial a}
\end{align}
and satisfies $g(E,E) = 0$.  Such metrics are known (\cite{DM24},  Proposition 5.4) to take a form generalising that of Sparling and Tod \cite{ST} in any Pleba\'nski coordinate system.  The linearised data on $M$ of the Joyce metric $g^J$ and Joyce function $F$ (see \S 5 of \cite{B4} for definitions) vanish,  for this $g$.  The easiest way to see this is that the formula (73) in \cite{B4} for $F$ vanishes for the general form of the metric given in \cite{DM24}.   \par 
The metric $g$ admits a Killing vector
\begin{align}
K = \frac{aw(w-1)}{2pq(q-1)(q-w)}\frac{\partial}{\partial v}. 
\end{align}
This can be explained by noting that $K(Q_1(x)) = Q_0(x)$.  Only Joyce structures on moduli spaces consisting of quadratic differentials with simple poles may admit a vector field satisfying this last equation.  It would be interesting to test the conjecture that all such Joyce structures admit an analogous Killing vector.  

\section{Conclusion and outlook}
We have described a family of Joyce structures and provided a geometric description for their complex hyper-K\"ahler metrics: Each metric is described as the equivalent data of an $\mathcal{O}(2)$-valued $2$-form on twistor space naturally arising via the intersection pairings on the family of curves defined by $X$.  By construction,  in the coordinates $\{a_{i}^{(\alpha)}\hspace{-1mm},w_\alpha,  q_I,  v_I\}$ the components of the metric can be calculated via residue formulae which reduce to finite sums of Laurent coefficients.  Accordingly,  these hyper-K\"ahler metrics are amenable to explicit calculation,  especially with the assistance of computer algebra.  \par  
Quadratic differentials having poles some of which are of even order were not covered here but we expect the constructions in this work can be readily adapted.  The complication caused by poles of even order is that the tautological $1$-form $\psi$ on $\Sigma_0(p)$ (defined analogously to \S \ref{M_defn}) will have residues that are not constant on $\text{Quad}(\bar{m})$.  The conjectural Joyce structure of \cite{BdM} exists on a submanifold of $\text{Quad}(\bar{m})$ where all these residues vanish.  \par 

 Joyce structures are expected to exist on moduli spaces of meromorphic quadratic differentials on higher genus Riemann surfaces,  varying both the quadratic differential and the Riemann surface.  The methods here are insufficient but one may speculate that these methods can be adapted to obtain the structure on a submanifold corresponding to fixing the Riemann surface.  \par
 There are some aspects of the Joyce structures constructed here that warrant discussion.  The hyper-K\"ahler structures each admit a \textit{projectable hyper-Lagrangian} foliation: a rank-$2n$ distribution Lagrangian for the sphere of complex structures and pushing-down to a foliation of $M$,  Lagrangian for the intersection pairing $\omega$.  In a subsequent article we will explain that this foliation is related to the Hodge structure of the curves $\Sigma_0(p)$. \par
There is a natural procedure,  starting from a Joyce structure,  to construct a flat connection and compatible (potentially degenerate) symmetric bilinear form $g^J$ on $M$.  The existence of this data depends on the regularity of the Joyce structure in the limit as the period coordinates $\theta_a$ go to zero (a limit where the apparent singularities $q_I$ will approach branch points of $\Sigma_0(p)$).  The metric $g^J$,  if it exists,  can be calculated in terms of the second derivatives of a \textit{Joyce function} $F$.  In simple examples,  this procedure produces holomorphic data on $M$ \cite{BdM},  and even coincides with that of a known Frobenius structure on $M$ \cite{BM}.  We will provide a general formula for $F$,  of the Joyce structures constructed here,  in subsequent work.  
 \subsection*{Acknowledgements}
The author would like to thank their PhD supervisor Maciej Dunajski for guidance and patience,  Tom Bridgeland and Fabrizio Del Monte for helpful discussions,  and the referees for their careful reading and comments,  which have substantially improved the manuscript.  The author is supported by Cambridge Australia Scholarships and gratefully acknowledges the hospitality of the Simons Center for Geometry and Physics,  Stony Brook University at which some of the research for this paper was performed,  during the programme \textit{Geometric, Algebraic, and Physical structures around the moduli of Meromorphic Quadratic Differentials}. This work has been partially supported by STFC consolidated grant ST/X000664/1.\par 

\makeatletter
\def\@seccntformat#1{%
  \expandafter\ifx\csname c@#1\endcsname\c@section\else
  \csname the#1\endcsname\quad
  \fi}
\makeatother

\appendix
\section{A.  Twistor distributions and quaternionic structures}\label{q_integrability}
We prove the following standard fact (usually phrased in terms of the Obata connection \cite{S},  Section 6):
\begin{prop}
The distribution $L(\hbar)$ given by \textup{(\ref{Lax_distribution})} is integrable for all $\hbar$ if and only if the complex structures $I,J,K$ defined by \textup{(\ref{quaternions})} are integrable.  
\end{prop}
\begin{proof}
The forward implication is straightforward since we can identify the limits of $L(\hbar)$ as $\hbar \to 0$ and $\hbar \to \infty$ as the $\pm i$-eigenspaces of $I$,  while $L(\pm i)$ gives the $\pm i$-eigenspaces of $J$ and so on.  Conversely,  suppose $I$ and $J$ are integrable.  There are functions $\alpha_{ab}{}^{c},  \beta_{ab}{}^{c},  \gamma^\pm_{ab}{}^{c}$,  $a,b,c = 1,...,2n$ satisfying 
\begin{align}
[U_{a},U_{b}] &= \sum_{c=1}^{2n}\alpha_{ab}{}^{c}U_{c},  \quad 
[V_{a},V_{b}] = \sum_{c=1}^{2n}\beta_{ab}{}^{c}V_{c},\\ 
[U_{a} \pm iV_{a},  U_{b} \pm iV_{b}] 
&= \pm i([U_{a},V_{b}] + [V_{a},U_{b}])  + \sum_{c=1}^{2n}\alpha_{ab}{}^{c}U_{c} - \sum_{c=1}^{2n}\beta_{ab}{}^{c}V_{c} \nonumber \\
&= \sum_{c=1}^{2n} \gamma_{ab}^{\pm}{}^{c}(U_{c}\pm i V_{c}).  \label{J_eig}
\end{align}
Adding the $\pm$ equations from (\ref{J_eig}),  and using linear independence of $\{U_{a},V_{a}\}_{a=1}^{2n}$:
\begin{align}
\alpha_{ab}{}^{c} &= \frac{1}{2}(\gamma_{ab}^{+}{}^{c} +\gamma_{ab}^{-}{}^{c}),   \quad
\beta_{ab}{}^{c} = -\frac{i}{2}(\gamma_{ab}^{+}{}^{c} - \gamma_{ab}^{-}{}^{c}). 
\end{align}
Lastly,  we use these to calculate
\begin{align}
[U_{a} + \hbar^{-1}V_{a},  U_{b} + \hbar^{-1}V_{b}]  = \sum_{c=1}^{2n} \rho_{ab}{}^{c}(U_{c} + \hbar^{-1}V_{c}),  
\end{align}
where
\begin{align}
\rho_{ab}{}^{c} = (\alpha_{ab}{}^{c} + \hbar^{-1}\beta_{ab}{}^{c}).  
\end{align}
\end{proof}
\section{B.  Explicit Isomonodromic Flows} \label{explicit_flows}
We provide formulae for the generators of the isomonodromic flows of the ODE (\ref{schro}) with potential $Q(x)$ detailed in \S \ref{potential_setup},  in terms of parameters $\{a_i^{(\alpha)},w_\alpha,  v_I,  q_{I}\}$.  \par
There are $n$ linearly independent flows satisfying definition \ref{isopotential_defn},  we called \textit{isopotential flows}.  They are somewhat ``trivial" in that they preserve $Q(x)$.  They are given by,  for $1 \le i \le m_\alpha$ where $\alpha = 1,...,N$ and $0 \le i \le m_\infty - 4$ for $\alpha = \infty$
\begin{align}
L_{i}^{(\alpha)} \coloneqq U_{i}^{(\alpha)} + \frac{V_{i}^{(\alpha)}}{\hbar},  
\end{align}
where simply
\begin{align}
U_{i}^{(\alpha)}  \coloneqq \frac{\partial}{\partial a_i^{(\alpha)}},  
\end{align}
while the $\hbar^{-1}$ component is given by 
\begin{align}
V_{i}^{(\alpha)} \coloneqq - \sum_{I=1}^{n} \frac{\partial p_I}{\partial a_i^{(\alpha)}} \frac{\partial}{\partial v_I}.  
\end{align}
Next,  to complete the basis of $2n$ flows we take a flow satisfying (\ref{fuchs}) with $A = (x-q_I)^{-1}$ for $I = 1,...,n$.  We denote by $V_I$ the unique flow satisfying (\ref{fuchs}) for this choice of $A$ and $\dot{a_i}^{(\alpha)}=0$ for $1 \le i \le m_\alpha$ where $\alpha = 1,...,N$ and $0 \le i \le m_\infty - 4$ for $\alpha = \infty$.  Explicitly
\begin{align}
L_I = U_I + \frac{V_I}{\hbar}
\end{align}
where the $\hbar^{-1}$ component is
\begin{align}
V_I = &-2p_I\frac{\partial}{\partial q_I} - \sum_{\substack{K=1}}^n\Bigg( \sum_{i=0}^{2m_\infty-7}\frac{T_i^{(\infty)}(q_I)q_K^i}{2p_K} +  \sum_{\alpha=1}^{N}\sum_{i=1}^{2m_\alpha-1}\frac{T_i^{(\alpha)}(q_I-w_\alpha)}{2p_K(q_K-w_\alpha)^{i}} \nonumber \\ &+  \sum_{\alpha=1}^{N} \sum_{j=1}^{2m_\alpha-1} \frac{ja_{j}^{(\alpha)}}{2p_K(q_I-w_\alpha)(q_K-w_\alpha)^{j+1}} \Bigg)\frac{\partial}{\partial v_K} \\ &+ \sum_{\substack{K=1 \\ K \ne I}}^n \Bigg(\frac{Q_0'(q_K)}{2p_K(q_K-q_I)}-\frac{p_K}{(q_K-q_I)^2}\Bigg)\frac{\partial}{\partial v_K} \nonumber \\ &+ \Bigg( \frac{Q_0'(q_I)v_I}{p_I} - R'(q_I)+ \sum_{\substack{K=1 \\ K \ne I}}^n\frac{p_K}{(q_K-q_I)^2} \Bigg)\frac{\partial}{\partial v_I},
\end{align}
while the $\hbar^{0}$ component is 
\begin{align}
U_I = &\sum_{i = m_\infty - 3}^{2m_\infty-7}T_i^{(\infty)}(q_I)\frac{\partial}{\partial a_i^{(\infty)}} + \sum_{\alpha = 1}^{N} \sum_{i = m_\alpha+1}^{2m_\alpha - 1}T_i^{(\alpha)}(q_I-w_\alpha)\frac{\partial}{\partial {a_i^{(\alpha)}}} + \sum_{\alpha = 1}^{N} \frac{1}{q_I-w_\alpha}\frac{\partial}{\partial w_\alpha} \nonumber \\ 
&- 2v_I\frac{\partial}{\partial q_I} - \sum_{\substack{K = 1 \\  K \ne I}}^{n} \frac{1}{q_K-q_I}\frac{\partial}{\partial q_K} + \Bigg(\sum_{K \ne I}^{n} \frac{v_K}{(q_K-q_I)^2} - \sum_{K \ne I}^{n} \frac{3}{2(q_K-q_I)^{3}}  - S'(q_I)\Bigg)\frac{\partial}{\partial v_I} \nonumber \\
&+ \sum_{\substack{K = 1 \\ K \ne I}}^{n} \Bigg(\frac{3}{2(q_K-q_I)^{3}}- \frac{v_K}{(q_K-q_I)^{2}}  \Bigg)\frac{\partial}{\partial v_K}.  
\end{align}
Here 
\begin{align} 
T_i^{(\alpha)}(x) &= \sum_{k=i}^{2m_\alpha-1}(2i-k-2)\frac{a_k^{(\alpha)}}{x^{k-i+2}},  \\ 
 T_{i}^{(\infty)}(x) &= (2i-2m_\infty + 7)x^{2m_\infty-i-7} + \sum_{k=i+2}^{2m_\infty - 7}(2i-k+2)x^{k-i-2}a_k^{(\infty)}
 \end{align}
 and the functions $Q_0(x), R(x),S(x)$ are defined in \S \ref{potential_setup}.  
 
 \section{C.  Generic non-degeneracy of the curve $\Sigma(\xi,\hbar)$}\label{generic_non_singularity}
 In this appendix,  we show that the algebraic curve $\Sigma(\xi,\hbar)$,  defined by $y^2 = Q(x)$ is non-degenerate for generic $(\xi,\hbar) \in X \times \mathbb{C}^*$.  
 \begin{prop}
The rational function $Q(x)$ has simple zeroes given $(\xi,\hbar)$ lies in the complement of the vanishing set of a non-zero holomorphic function $\Delta: X \times \mathbb{C}^* \to \mathbb{C}$.  
\end{prop}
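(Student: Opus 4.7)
The plan is to exhibit $\Delta$ explicitly as a discriminant and then show it does not vanish identically on $X \times \mathbb{C}^*$.

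First I would clear denominators. Let
\[
H(x) := \prod_{\alpha=1}^{N}(x-w_\alpha)^{2m_\alpha-1}\prod_{I=1}^{n}(x-q_I)^{2},
\]
chosen so that $N(x;\xi,\hbar):=H(x)Q(x)$ is a polynomial in $x$ of a fixed degree whose coefficients depend holomorphically on $(\xi,\hbar)\in X\times\mathbb{C}^{*}$. I would then check that $H$ and $N$ share no common root. At $x=w_\alpha$, the Laurent expansion of $Q$ has leading term $a^{(\alpha)}_{2m_\alpha-1}/\hbar^{2}(x-w_\alpha)^{-(2m_\alpha-1)}$, since $Q_1$ has a pole of order at most $m_\alpha\le 2m_\alpha-1$ and $Q_2$ has a pole of order at most $2m_\alpha$ (noting $2m_\alpha<2m_\alpha+1$ doesn't exceed the leading order once shifted); and $a^{(\alpha)}_{2m_\alpha-1}\ne 0$ on $M$. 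At $x=q_I$ the apparent-singularity condition \eqref{apparent} forces the leading coefficient to be $3/4\ne 0$. Hence $N(w_\alpha)\ne 0 \ne N(q_I)$, so the zeros of $Q$ are precisely the zeros of $N$, counted with multiplicity. I then define $\Delta(\xi,\hbar):=\operatorname{disc}_{x}N(x;\xi,\hbar)$, a holomorphic function on $X\times\mathbb{C}^{*}$. On $\{\Delta\ne 0\}$, $N$ has only simple roots and so does $Q$.

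The remaining content is non-triviality: $\Delta\not\equiv 0$. For this it suffices to exhibit a single point of $X\times\mathbb{C}^{*}$ at which $Q(x)$ has all simple zeros. My preferred route is the limit $|\hbar|\to\infty$ with $\xi$ fixed, under which $Q(x)\to Q_2(x)$ uniformly on compact subsets of $\mathbb{CP}^{1}\setminus\{w_\alpha,q_I\}$, equivalently $N(x;\xi,\hbar)\to H(x)Q_2(x)$ as polynomials. Continuity of roots then reduces the problem to exhibiting some $\xi\in X$ for which $Q_2(x)$ has only simple zeros; the full statement follows by taking $|\hbar|$ large enough for that particular $\xi$.

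The main obstacle is showing that the discriminant of $H(x)Q_2(x)$ is not identically zero on $X$. The clean observation is that, with $\{a^{(\alpha)}_i,w_\alpha,q_I\}$ held fixed, the coefficients of $H(x)Q_2(x)$ are affine functions of $\{v_I\}_{I=1}^{n}$, and inspecting the explicit formula for $Q_2$ one sees that the $n$ perturbations $\partial/\partial v_I$ contribute linearly independent polynomials (for instance, $\partial_{v_I}(H(x)Q_2(x))$ differs from $H(x)/(x-q_I)$ modulo lower-order corrections coming from $S$). Hence the image in coefficient space is a genuine $n$-plane, so the locus where $H(x)Q_2(x)$ acquires a double root is a proper algebraic subset of that affine space and is therefore avoided on a Zariski-open subset of $\{v_I\}$. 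Choosing any such $\{v_I\}$ produces the required $\xi$. As a concrete fallback, one can reduce further to the lowest-dimensional case in the family (e.g.\ $\bar m=\{7\}$, handled in \cite{DM24}) and quote or check by hand that $\Delta$ is non-trivial there; the general case then follows because these examples embed as slices of the present $X$.
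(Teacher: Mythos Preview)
Your setup is correct and essentially matches the paper: clear denominators and take the discriminant of the resulting polynomial. The gap is in your non-triviality argument via $|\hbar|\to\infty$.

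First, the degree of $N(x;\xi,\hbar)$ in $x$ is governed by the $Q_0/\hbar^{2}$ term (which contributes the $x^{2m_\infty-5}$ growth at infinity), so the leading coefficient is $\hbar^{-2}$. As $\hbar\to\infty$ this leading coefficient tends to zero and $m_\infty-1\ge 2$ roots escape to infinity; the discriminant of a polynomial whose top coefficient degenerates need not converge to the discriminant of the limit, so ``continuity of roots'' is not the clean reduction you claim. More seriously, the limit polynomial $H(x)Q_2(x)$ \emph{always} has multiple zeros: $Q_2$ has a pole of order at most $m_\alpha$ at $w_\alpha$ (coming from $S(x)$ with denominator $F(x)=\prod(x-w_\alpha)^{m_\alpha}$), so $H(x)Q_2(x)$ vanishes at $w_\alpha$ to order $(2m_\alpha-1)-m_\alpha=m_\alpha-1$, which is $\ge 2$ whenever $m_\alpha\ge 3$. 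Hence ``$Q_2$ has only simple zeros'' does \emph{not} imply ``$HQ_2$ has only simple zeros,'' and your reduction fails. The fallback of embedding the $\bar m=\{7\}$ case as a slice of a general $X$ is also not available: different $\bar m$ give genuinely different manifolds $X$, with no natural inclusion.

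The paper instead takes the opposite limit $\hbar\to 0$, working with $\mathcal{P}(x)=\hbar^{2}H(x)Q(x)$, which is monic of fixed degree. Then $\mathcal{P}(x)\to H(x)Q_0(x)$, which is nonzero at each $w_\alpha$ (since $Q_0$ has a pole of exactly the right order there) and has only \emph{double} zeros at the points $q_I$. The remaining work is a local analysis near each $q_I$: setting $v_I=0$ and expanding using the apparent-singularity form, one derives a pair of quadratics in $a_I'(0)$ that are incompatible, contradicting the hypothesis that a repeated root persists for all small $\hbar$. This splitting argument is the genuine content you are missing; the $\hbar\to\infty$ route would require an analogous (and harder) splitting analysis at each $w_\alpha$ with $m_\alpha\ge 3$ and at infinity, which you have not supplied.
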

\begin{proof} 
The condition that $Q(x)$ has simple zeroes is equivalent to the non-vanishing of the discriminant $\Delta: X \times \mathbb{C}^* \to \mathbb{C}$ of the polynomial 
\begin{align}
\mathcal{P}(x) = \hbar^2\prod_{I=1}^{n}(x-q_I)^2\,  \prod_{\alpha=1}^{N}(x-w_\alpha)^{2m_\alpha -1} \, Q(x).
\end{align}
We therefore seek to show the discriminant does not vanish identically on $X \times \mathbb{C}^*$.  
We will show that for sufficiently small $\hbar \ne 0$,  there is a $(\xi,\hbar)$ such that $\mathcal{P}(x)$ has simple zeroes.  First,  note that
\begin{align}
\lim_{\hbar \to 0} \mathcal{P}(x) = \prod_{I=1}^{n}(x-q_I)^2 \,  \prod_{\alpha=1}^{N}(x-w_\alpha)^{2m_\alpha -1} \,  Q_0(x),  
\end{align}
which is a polynomial with $n - N - 1$ zeroes of multiplicity one by the assumption that $Q_0(x)$ has distinct zeroes and $n$ zeroes of multiplicity $2$ at $x = q_I$.  Since the position of distinct zeroes will (locally) vary holomorphically with $\hbar$,  it suffices to show that for small $\hbar \ne 0$,  $\mathcal{P}(x)$ has a pair of distinct zeroes in a neighbourhood of $x = q_I$.  We may set $v_I = 0$ for every $I$.  Define 
\begin{align}
\mathcal{F}_I(x) \coloneqq \prod_{\substack{I = 1 \\ I \ne J}}^n (x - q_J)^2 \prod_{\alpha=1}^N (x-w_\alpha) ^{2m_\alpha -1}.  
\end{align}
For each $I = 1,...,n$ we may expand
\begin{align}
\mathcal{P}(x) = \mathcal{F}_I(x)\Bigg( \frac{3}{4}\hbar^2 + \hbar p_I(x-q_I) + p_I^2(x-q_I)^2 + R_3(x) (x-q_I)^3\Bigg),  
\end{align}
where $R_3(x)$ is a rational function in $x$ regular at $q_I$.  \par Suppose that $\mathcal{P}(x)$ had a repeated zero for all $\hbar$ with $|\hbar| < \epsilon$.  Then,  for small enough $\epsilon$ and for some $I$ there exists a holomorphic function $a_I: B(0,\epsilon) \to \mathbb{C}$ of $\hbar$ with $a_I(0) = q_I$ and such that $\mathcal{P}(a_I(\hbar)) = 0$ and $\mathcal{P}'(a_I(\hbar)) = 0$.  Since we can take $\epsilon$ small enough so $\mathcal{F}_I(a_I(\epsilon)) \ne 0$,  we obtain from $\mathcal{P}(a_I(\hbar)) = 0$ and $\mathcal{P}'(a_I(\hbar)) = 0$ respectively:
\begin{align}
\frac{3}{4}\hbar^2 + \hbar p_I(a_I(\hbar) - q_I) + p_I^2(a_I(\hbar)-q_I)^2 + R_3(a_I(\hbar))  (a_I(\hbar)-q_I)^3 &= 0, \\ 
\hbar p_I + 2p_I^2(a_I(\hbar)-q_I) + 3R_3(a_I(\hbar)) (a_I(\hbar)-q_I)^2 + R_3'(a_I(\hbar)) (a_I(\hbar)-q_I)^3 &= 0.
\end{align}

Eliminating the $(a_I(\hbar)-q_I)^3$ and $(a_I(\hbar)-q_I)^0$ terms respectively yields the two equations
\begin{align}
\frac{9}{4} + 2p_I\frac{(a_I(\hbar)-q_I)}{\hbar} + p_I^2\frac{(a_I(\hbar)-q_I)^2}{\hbar^2} - R'_3(a_I(\hbar))\frac{(a_I(\hbar)-q_I)^4}{\hbar^2} = 0
\end{align}
and
\begin{align}
\frac{(a_I(\hbar)-q_I)}{\hbar}\Bigg(&\frac{1}{2}p_I + \frac{3R_3'(a_I(\hbar))(a_I(\hbar)-q_I)^2}{4p_I} + \frac{9R_3(a_I(\hbar))(a_I(\hbar)-q_I)}{4p_I} \nonumber \\  
& - p_I^2 \frac{(a_I(\hbar)-q_I)}{\hbar} - R_3(a_I(\hbar))\frac{(a_I(\hbar)-q_I)}{\hbar}(a_I(\hbar)-q_I)\Bigg) = 0.  
\end{align}
Now,  
\begin{align}
\lim_{\hbar \to 0} \frac{a_I(\hbar) - q_I}{\hbar} = a_I'(0)
\end{align}
so that taking the $\hbar \to 0$ limits yields a pair of quadratic equations in $ a_I'(0)$:
\begin{align}
\frac{9}{4} + 2p_Ia_I'(0) + p_I^2a_I'(0)^2  &= 0,  \\ 
a_I'(0)\bigg(\frac{1}{2}p_I  - p_I^2 a_I'(0) \bigg) &= 0.  
\end{align}
This system has no solution for $a_I'(0)$.  \end{proof}
 \section{D.  Linear independence of isomonodromic flows}\label{linear_independence_appendix}
 In this appendix we show that the set of $4n$ vector fields
 $$\mathcal{S} = \{U_I,V_I\} \cup \{U_i^{(\alpha)},V_i^{(\alpha)}\} $$ 
given in Appendix \ref{explicit_flows} are linearly independent,  hence trivialise $TX$.  This would then complete the proof of Proposition \ref{explicit_flows_calc_prop} by showing that the basis of isomonodromic flows $\{L_i^{(\alpha)}\} \cup \{L_I\}$ define a twistor distribution of rank $2n$ in the sense of Definition \ref{twistor_distribution_def}.  \par 
First,  note that the linear independence of the $n$ vector fields $\{V_{i}^{(\alpha)}\}$
    is equivalent to the invertibility of a particular $n \times n$ matrix $\mathcal{N}$ (related to a Vandermonde matrix)
with entries $\mathcal{N}^I_{(\alpha,i)}$,  letting $(\alpha, i)$ index the column and $I$ index the row:
\begin{align}
\mathcal{N}_{(\alpha,i)}^I \coloneqq  2p_I\frac{\partial p_I}{\partial a_{i}^{(\alpha)}} = \frac{1}{(q_I-w_\alpha)^i}
\end{align}
for $\alpha = 1,...,n$,  $i = 1,...,m_\alpha$,  $I = 1,...,n$ and 
\begin{align}
\mathcal{N}_{(\infty,i)}^I \coloneqq 2p_I\frac{\partial p_I}{\partial a_{i}^{(\infty)}} = q_I^i
\end{align}
for $i = 0,...,m_\infty - 4$,  $I = 1,...,n$.  \par 
We will see later $\operatorname{det}(\mathcal{N)} \ne 0$ on $X$.  
Given this non-vanishing,  we see that the $v_I$ coordinate derivatives for $I = 1,...,n$ are in the span of $\mathcal{S}$.  We can thus instead consider a set $\tilde{\mathcal{S}}$ with $\text{span}(\tilde{\mathcal{S}}) = \text{span}(\mathcal{S})$ given by 
\begin{align}
\tilde{S} = \{\tilde{U_I},\tilde{V_I}\} \cup \{U_i^{(\alpha)},V_i^{(\alpha)}\},  
\end{align}
where the $\tilde{V}_I,  \tilde{U}_I$ are $V_I,  U_I$ minus their components in the $v_I$ directions for $I = 1,...,n$.  The $\tilde{V_I}$ are then clearly linearly independent and have the same span as the $q_I$ coordinate derivatives on $X$ for $I = 1,...,n$.  In particular,  the vertical bundle is the span of $S$ and so it suffices to check that the projections of $\{U_{i}^{(\alpha)}\} \cup \{U_I\}$ to $M$ are linearly independent.  The $\{U_{i}^{(\alpha)}\}$ are clearly linear independent (being the coordinate vector fields) and transverse to the span of the $\{U_I\}$ so it remains to the check the projections of the $n$ vector fields $\{U_I\}$ are linearly independent.  \par
It is convenient to set $a_{2m_\alpha}^{(\alpha)} \coloneqq w_\alpha$.  
We seek to show that
\begin{align}
\Bigg\{ \sum_{i = m_\infty - 3}^{2m_\infty-7} \hspace{-2mm} T_i^{(\infty)}(q_I)\frac{\partial}{\partial a_i^{(\infty)}} + \sum_{\alpha = 1}^{n} \sum_{i = m_\alpha+1}^{2m_\alpha - 1} \hspace{-2mm} T_i^{(\alpha)}(q_I-w_\alpha)\frac{\partial}{\partial {a_i^{(\alpha)}}} + \sum_{\alpha = 1}^{N} \frac{1} {q_I-w_\alpha}\frac{\partial}{\partial a_{2m_\alpha}^{(\alpha)}} \Bigg\}_{I=1}^{n}
\end{align}
is a linearly independent set.  We argue that the matrix $\hat{\mathcal{N}}$ expressing the above as a linear combination of the linearly independent set
\begin{align}
\Bigg\{\frac{\partial}{\partial {a_i^{(\alpha)}}} \Bigg\}_{\alpha = 1,...,N}^{i = m_\alpha+1,...,2m_\alpha} \cup \Bigg\{   \frac{\partial}{\partial a_i^{(\infty)}}\Bigg\}^{i=m_\infty-3,...,2m_\infty-7} 
\end{align}
is invertible if and only if $\mathcal{N}$ is.  This follows because we may express $\hat{\mathcal{N}} = \mathcal{N}\mathcal{T}$, where $\mathcal{T}$ is block diagonal,  with $N+1$,  $m_\alpha \times m_\alpha$ blocks labelled by $\alpha = 1,...,N,\infty$ corresponding to the poles.  Each block is triangular with non-vanishing diagonal.  The entries of $\mathcal{T}$ comes from the coefficients of the functions $T_i^{(\alpha)}$ and $T_i^{(\infty)}$ written down in Appendix \ref{explicit_flows}.  Schematically,  for $\alpha = 1,...,N$ we have
\begin{align}
&\begin{bmatrix}
\frac{1}{q_1-w_\alpha} & \frac{1}{(q_1-w_\alpha)^2} & \cdots & \frac{1}{(q_1-w_\alpha)^{m_\alpha}}  \\
\frac{1}{q_2-w_\alpha} & \frac{1}{(q_2-w_\alpha)^2} & \cdots & \frac{1}{(q_2-w_\alpha)^{m_\alpha}}  \\ 
\vdots & \vdots & \vdots & \vdots \\
\frac{1}{q_n-w_\alpha} & \frac{1}{(q_n-w_\alpha)^2} & \cdots & \frac{1}{(q_n-w_\alpha)^{m_\alpha}}  
\end{bmatrix}
\hspace{-3mm}
\begin{bmatrix}
  1 & 0 & \cdots & 0 \\ 
0 & {(2m_\alpha-3)a_{2m_\alpha-1}^{(\alpha)}} & \cdots  & (m_\alpha-1)a_{m_\alpha+1}  \\ 
\vdots & \vdots & \vdots & \vdots \\ 
0 & 0 & \cdots & {a_{2m_\alpha-1}}
\end{bmatrix}  \nonumber \\
&= 
\begin{bmatrix}
\frac{1}{q_1-w_\alpha} & T^{(\alpha)}_{2m_\alpha-1}(q_1-w_\alpha) & \cdots & T^{(\alpha)}_{m_\alpha+1}(q_1-w_\alpha)\\
\frac{1}{q_2-w_\alpha} & T^{(\alpha)}_{2m_\alpha-1}(q_2-w_\alpha) & \cdots & T^{(\alpha)}_{m_\alpha+1}(q_2-w_\alpha) \\
\vdots & \vdots & \vdots & \vdots \\
\frac{1}{q_n-w_\alpha} & T^{(\alpha)}_{2m_\alpha-1}(q_n-w_\alpha) & \cdots & T^{(\alpha)}_{m_\alpha+1}(q_n-w_\alpha)\\
\end{bmatrix},  
\end{align}
where the second factor is an $m_\alpha \times m_\alpha$ block of $\mathcal{T}$.  The decomposition is similar for the block corresponding to $\alpha = \infty$.   \par 
So it remains to check that $\mathcal{N}$ is invertible on $X$.  Recall that on $X$ the $q_I$ are distinct and away from the branch points.  The poles $w_\alpha$ are of course also distinct.  Given these conditions,  we will show $\det(\mathcal{N})$ is non-vanishing.  \par 

We clear denominators and consider the determinant of the matrix
\begin{align}
\mathcal{M} = \prod_{\substack{\alpha = 1,...,N \\ I = 1,...,n}} (w_\alpha - q_I)^{m_\alpha} 
\mathcal{N}.  
 \end{align}
 The $(\alpha,  i)$ column has entries which are of degree $n \sum_{\beta=1}^N  m_\beta - i$,  for $\alpha \ne \infty$ and $n \sum_{\beta=1}^N  m_\beta + i$ for $\alpha = \infty$.  Accordingly,  considering $\det(\mathcal{M})$ as a homogeneous polynomial in $q_I,  w_\alpha$ we have: 
 \begin{align}
 &\phantom{=} \deg(\det(\mathcal{M})) = \sum_{\alpha = 1}^{N} \sum_{i=1}^{m_\alpha} \Big( n \sum_{\beta=1}^N  m_\beta - i\Big) + \sum_{i=0}^{m_\infty-4} \Big(n \sum_{\beta=1}^N  m_\beta + i \Big) \nonumber \\ 
 &=    \sum_{\alpha = 1}^{N} \bigg( nm_\alpha \sum_{\beta=1}^N  m_\beta - \frac{m_\alpha(m_\alpha+1)}{2}\bigg) + n(m_\infty -3) \sum_{\beta=1}^N  m_\beta + \frac{(m_\infty-3)(m_\infty-4)}{2} \nonumber  \\ \nonumber 
 &= n^2(n-m_\infty+3) -  \sum_{\alpha = 1}^{N}  \frac{m_\alpha(m_\alpha+1)}{2}  + \frac{(m_\infty-3)(m_\infty-4)}{2} \\
  &= n^2(n-m_\infty+3)  - \frac{n}{2}+ \frac{(m_\infty-3)^2}{2} -  \sum_{\alpha = 1}^{N}  \frac{m_\alpha^2}{2},   
 \end{align}
 where we have used the definition of $n$ in (\ref{ngendef}).  \par 
 We will now find all the linear factors of $\operatorname{det}(\mathcal{M})$:   \par 
 $(q_I - q_J)$ divides $\det(\mathcal{M})$ at least once for each pair $I \ne J$ since $q_I = q_J$ implies two columns of $\mathcal{M}$ are equal.  \par 
From thinking about the cofactor expansion we see that $\det(\mathcal{N})$ will have a pole of order $m_\alpha$ along $w_\alpha - q_I = 0$.  Accordingly,  $\det(\mathcal{M})$ will have a factor of $(w_\alpha - q_I)$ with multiplicity $(n-1)m_\alpha$.  \par 
We will show the multiplictiy of $(w_\alpha-w_\beta)$ is $m_\alpha m_\beta$.  First,  note that if it vanishes,  at least two columns of $\mathcal{N}$ are the same,  and so it is a factor.  Without loss of generality we take $m_\alpha \ge m_\beta$.  Make the substitution $w_\alpha = t + w_\beta$.  Then the columns $(\alpha,i)$ of the original matrix $\mathcal{N}$ for $i = 1,...,n$ are the only columns to depend on $t$.  We may bound the multiplicity below by repeatedly applying the ``cofactor product rule".  We have
\begin{align}
\frac{d}{dt}\det(\mathcal{N}) = \pm \hspace{-3mm} \sum_{i = 1,..,m_\alpha} (-1)^i \det(\mathcal{N}{[\alpha,i]}),  
\end{align}
where $\mathcal{N}{[\alpha,i]}$ is the matrix $\mathcal{N}$ with the $(\alpha,i)$ column replaced by its derivative with respect to $t$.  To calculate higher derivatives of $\det(\mathcal{N})$ we may expand the derivative of $\mathcal{N}{[\alpha,i]}$ using the same formula and so on.  \par 
Two columns of $\frac{d}{dt}\det(\mathcal{N})$ will be the same when $t = 0$ unless $m_\alpha = m_\beta = 1$.  In fact,  we must differentiate at least $m_\alpha m_\beta$ times until there exists a non-vanishing contribution to the expansion for the determinant when $t = 0$.  One way to see this is that we must decrease the degree of the $(i,\alpha)$ column with entries $(q_I - w_\beta - t)^i$ for $i \le m_\beta$ at least $m_\beta - i + 1$ times to have them distinct from the $(\beta,i)$ columns when $t = 0$.  This leaves $m_\beta$ columns with degree $-m_\beta - 1$ and $m_\alpha - m_\beta$ columns with degrees $-m_\beta - 1,...,-m_\alpha$.  To make all these columns distinct when $t = 0$ we need to differentiate at least another $m_\alpha - m_\beta + i - 1$ times for each $i = 1,...,m_\beta$.  Thus
\begin{align}
\frac{d^k}{dt^k}\det(\mathcal{N}) = 0
\end{align}
for $0 \le k < m_\alpha m_\beta$ and $(w_\alpha - w_\beta)$ has multiplicity at least $m_\alpha m_\beta$.   \par 
The number of linear factors accounted for above,  counting multiplicity,  is 
\begin{align}
&\sum_{I = 1}^{n} \sum_{\alpha = 1}^{N} (n-1)m_\alpha + \frac{n(n-1)}{2} + \sum_{\alpha > \beta}^{N} m_\alpha m_\beta\nonumber  \\ \nonumber = \ &n^3 - n(n-1)(m_\infty - 3)- \frac{n^2}{2} - \frac{n}{2} + \sum_{\alpha > \beta}^{N} m_\alpha m_\beta \\ = \ &n^3 - n(n-1)(m_\infty - 3)- \frac{n^2}{2} - \frac{n}{2} + \frac{1}{2} \Bigg( \sum_{\alpha = 1}^N m_\alpha \Bigg)\Bigg( \sum_{\beta = 1}^N m_\beta \Bigg) - \sum_{\alpha = 1}^{N} \frac{m_\alpha^2}{2}
\nonumber \\ = \ &n^3 - n(n-1)(m_\infty - 3)- \frac{n^2}{2} - \frac{n}{2} + \frac{1}{2}(n - m_\infty + 3)^2 -  \sum_{\alpha = 1}^{N}  \frac{m_\alpha^2}{2} \nonumber
\nonumber \\ = \ &n^3 - n^2(m_\infty - 3) - \frac{n}{2} + \frac{(m_\infty - 3)^2}{2} -  \sum_{\alpha = 1}^{N}  \frac{m_\alpha^2}{2} = \deg(\det(\mathcal{M})).  
\end{align} 
Accordingly,  all the factors of $\det(\mathcal{M})$ take the form $(q_I - q_J)$,  $(q_I - w_\alpha)$ or $(w_\alpha - w_\beta)$ and hence $\det(\mathcal{N})$ is non-vanishing on $X$.  

\subsection*{Data availability} No datasets were generated or analysed during this work.  
\subsection*{Conflicts of interest} The author declares there are no conﬂicts of interest.


\begin{thebibliography}{11}\raggedright

\bibitem{ACGH} E.  Arbarello,  M.  Cornalba,  P.A.  Griffiths,  J.  Harris,  
{\em Geometry of Algebraic Curves},  
Grundlehren der mathematischen Wissenschaften,  \textbf{267},  Springer,  (1985).  

\bibitem{BE} T N.  Bailey,  M.G.  Eastwood,  
{\em Complex Paraconformal Manifolds - their Differential Geometry and Twistor Theory},
Forum.  Math. \textbf{3}, 1,  (1991). 

\bibitem{BK} M.  Bertola,  D.  Korotkin,  
{\em Tau-Functions and Monodromy Symplectomorphisms},  
 Commun.  Math.  Phys.  \textbf{388},  (2021).

\bibitem{Bo} P. Boalch,  
{\em Symplectic Manifolds and Isomonodromic Deformations},
Adv.  Math. {\bf 163},  2,  (2001).

\bibitem{Bo2} A.  Bobenko,  
{\em Introduction to Compact Riemann Surfaces}, 
Computational Approach to Riemann Surfaces,  Lecture Notes in Mathematics,  Springer,  (2013).  
 
\bibitem{B2} T.  Bridgeland,  
{\em Joyce structures on spaces of quadratic differentials},
Geometry \& Topology \textbf{29},  5, (2025).

\bibitem{B4} T.Bridgeland,  
{\em Joyce structures and their twistor spaces},  
Adv.  Math.  {\bf 462},  110089, (2025).  

\bibitem{B1} T.  Bridgeland,
{\em Tau functions from Joyce structures},
SIGMA \textbf{20},  112, (2024). 

\bibitem{B3} T.  Bridgeland, 
{\em Geometry from Donaldson-Thomas invariants},
 Integrability, Quantization, and Geometry
II.  Quantum Theories and Algebraic Geometry,  Proc.  Sympos.  Pure Math.  Amer.  Math.  Soc., (2021).

\bibitem{BdM} T.  Bridgeland,  F.  Del Monte,  
 {\em Joyce Structures and Poles of Painlev\'{e} Equations}
 \texttt{arXiv:2505.03429v1},  (2025).

\bibitem{BM} T.  Bridgeland,   D.  Masoero,
{\em On the monodromy of the deformed cubic oscillator},
Math. Ann.  {\bf 385},  (2023).

\bibitem{BS2} T.  Bridgeland,  I. Smith,
{\em Quadratic Differentials as Stability Conditions},
 Publ.  math.  IHES {\bf 121},  (2015). 
 
\bibitem{BS} T.  Bridgeland,  I.A.B.  Strachan,
{\em Complex hyperkähler structures defined by Donaldson–Thomas invariants},
Lett.  Math.  Phys.  {\bf 111},  54,  (2021).

\bibitem{CS} A. \v{C}ap,  J. Slov\'ak, 
{\em Parabolic Geometries I: Background and General Theory},
Math.  Surv.  and Monographs {\bf 154},
Amer.  Math.  Soc.,  (2009).

\bibitem{D} M.  Dunajski,  
{\em Null Kähler Geometry and Isomonodromic Deformations},  
Commun.  Math.  Phys.  {\bf 391},  (2022).

\bibitem{DM24} M.  Dunajski,  T.  Moy,  
{\em Heavenly metrics,  hyper-Lagrangians and Joyce structures},
J.  Lond.  Math.  Soc.  {\bf 110},  5,  (2024).   

\bibitem{EGH} A.  Eremenko,  A. Gabrielov,  A.  Hinkkanen,  
{\em Exceptional solutions to the Painlev\'e VI equation},  
J.  Math.  Phys.  {\bf 58},  1, (2017).  

\bibitem{F} R.  Fuchs,  {\em Sur quelques \'{e}quations diff\'{e}rentielles lin\'{e}aires du second ordre}, Comptes Rendus de l'Acad\'{e}mie des Sciences Paris {\bf 141},  (1905). 

\bibitem{HKLR} N.J.  Hitchin,  A.  Karlhede, U. Lindström, M.  Ro\v{c}ek,  
{\em Hyperkähler metrics and supersymmetry},  
 Commun.Math. Phys.  \textbf{108},  (1987).

\bibitem{JMU} M.  Jimbo,  T.  Miwa,  K.  Ueno,  
{\em Monodromy preserving deformation of linear ordinary differential equations with rational coefficients},  Physica D \textbf{2},  2,  (1981).

\bibitem{J} D.  Joyce,  
{\em Holomorphic generating functions for invaraints counting coherent sheaves on Calabi-Yau 3-folds}, 
 Geom.  Topol.  \textbf{11},  (2007).  

\bibitem{JS} D.  Joyce,  Y.  Song,  
{\em A theory of generalized Donaldson-Thomas invariants},  
Mem.  Amer.  Math.  Soc.  \textbf{217},  no.  1020,  (2012).  

\bibitem{M} G.  Mahoux,  
{\em Introduction to the Theory of Isomonodromic Deformations of Linear Ordinary Differential Equations with Rational Coefficients},
The Painlevé Property,  One Century Later,  CRM Series in Mathematical Physics,  Springer,  (1999).  

\bibitem{Mu} D.  Mumford,  
{\em Tata Lectures on Theta II,  Jacobian theta functions and differential equations},  
Modern Birkhäuser Classics,  (2006).

\bibitem{O} K. Okamoto,  
{\em Studies on the Painlev\'{e} equations.  I: Sixth Painlev\'{e} equation PVI},
Ann.  Mat.  Pura Appl.  \textbf{146}(1),  1,  (1986). 

\bibitem{Pe} R.  Penrose,
{\em Nonlinear Gravitons and Curved Twistor Theory},
Gen.  Relativ.  Gravit.  {\bf 7},  1,  (1976).

\bibitem{Po} H.  Poincar\'{e}, 
{\em Sur les groupes des \'{e}quations lin\'{e}aires},
Acta Math.  {\bf 4},  (1884). 

\bibitem{S} S. M.  Salamon,  
{\em Differential geometry of quaternionic manifolds}, 
Ann.  Sci.  Éc.  Norm.  Supér.  Serie 4,  \textbf{19},  1,  (1986).  

\bibitem{ST} G. A. J.  Sparling,  K. P. Tod,
{\em An example of an H‐space},
 J.  Math.  Phys.  {\bf 22},  (1981). 

\bibitem{U} K.  Ueno,
{\em Monodromy Preserving Deformation of Linear Differential Equations with Irregular Singular Points},
Proc.  Japan.  Acad.  \textbf{56},  Ser. A,  (1980).  

\bibitem{Y} D.  Yamakawa,
{\em Fundamental two-forms for isomonodromic deformations}, 
Journal of Integrable Systems,  \textbf{4},  1, (2019). 

%
%

%
%
%
%
%
%
%
%
%
%
%
%
%
%
%
%
%
%
%
%
%
%
%
%
%
%
%
%
%
%

%
%
%
%
%
%
%
%

%
%
%
%
%
%
%
%
%
%
%
%
%
%
%
%
%
%
%
%
%
%
%
%
%
%
%
%
%
%
%
%
%
%
%
%
%
%
%
%
%
%
%
%
%
%
%


\end{thebibliography}
\end{document}